\theoremstyle{definition}
\newtheorem{nul}{}[section]
\newtheorem{dfn}[nul]{Definition}
\newtheorem{rmk}[nul]{Remark}
\newtheorem{cnstr}[nul]{Construction}
\newtheorem{cnv}[nul]{Convention}
\newtheorem{exm}[nul]{Example}
\newtheorem{rec}[nul]{Recollection}
\newtheorem{qst}[nul]{Question}
\newtheorem{prb}[nul]{Problem}
\newtheorem*{dfn*}{Definition}
\newtheorem*{axm*}{Axiom}
\newtheorem*{ntn*}{Notation}
\newtheorem*{exm*}{Example}
\newtheorem*{exr*}{Exercise}
\newtheorem*{int*}{Intuition}
\newtheorem*{qst*}{Question}
\newtheorem*{rmk*}{Remark}
\theoremstyle{plain}
\newtheorem{thm}[nul]{Theorem}
\newtheorem{prop}[nul]{Proposition}
\newtheorem{lem}[nul]{Lemma}
\newtheorem{cnj}[nul]{Conjecture}
\newtheorem{cor}[nul]{Corollary}
\newtheorem*{thm*}{Theorem}
\newtheorem*{prop*}{Proposition}
\newtheorem*{cor*}{Corollary}
\newtheorem*{lem*}{Lemma}
\newtheorem*{cnj*}{Conjecture}
\let\oldwidetilde\widetilde
\protected\def\widetilde{\oldwidetilde}
\DeclareMathOperator{\coker}{\mathrm{coker}}
\DeclareMathOperator{\Ss}{\mathbb{S}}
\DeclareMathOperator{\F}{\mathbb{F}}
\DeclareMathOperator{\MO}{\mathrm{MO}}
\DeclareMathOperator{\MOfn}{\mathrm{MO \langle 4m \rangle}}
\DeclareMathOperator{\MOn}{\mathrm{MO \langle n \rangle}}
\DeclareMathOperator{\Ext}{\mathrm{Ext}}
\DeclareMathOperator{\GL}{\mathrm{GL}}
\DeclareMathOperator{\Sq}{\mathrm{Sq}}
\newcommand{\sOf}{\Sigma^{\infty} \mathrm{O \langle 4m-1\rangle}}
\newcommand{\wt}{\widetilde}
\newcommand{\BO}{\mathrm{BO}}
\newcommand{\Oo}{\mathrm{O}}
\newcommand{\HFp}{\mathbb{F}_p}
\newcommand{\HFt}{\mathbb{F}_2}
\newcommand{\kappabar}{\overline{\kappa}}
\newcommand{\BSO}{\mathrm{BSO}}
\newcommand{\Str}{\mathrm{String}}
\newcommand{\MStr}{\mathrm{MString}}
\newcommand{\MOtw}{\mathrm{MO}\langle 12 \rangle}
\newcommand{\BStr}{\mathrm{BString}}
\newcommand{\Spin}{\mathrm{Spin}}
\newcommand{\MSpin}{\mathrm{MSpin}}
\newcommand{\ord}{\mathrm{ord}}
\newcommand{\MF}{\mathrm{MF}}
\newcommand{\tmf}{\mathrm{tmf}}
\newcommand{\Ah}{\widehat{A}}
\newcommand{\CC}{\mathbb{C}}
\newcommand{\num}{\mathrm{num}}
\newcommand{\denom}{\mathrm{denom}}
\newcommand{\sig}{\mathrm{sig}}
\newcommand{\bp}{\mathrm{bP}}
\newcommand{\Z}{\mathbb{Z}}
\newcommand{\ZZ}{\mathbb{Z}}
\DeclarePairedDelimiter\abs{\lvert}{\rvert}%
\let\oldabs\abs
\def\abs{\@ifstar{\oldabs}{\oldabs*}}
\let\oldtocsection=\tocsection
\let\oldtocsubsection=\tocsubsection
\let\oldtocsubsubsection=\tocsubsubsection
\renewcommand{\tocsection}[2]{\hspace{0em}\oldtocsection{#1}{#2}}
\renewcommand{\tocsubsection}[2]{\hspace{1em}\oldtocsubsection{#1}{#2}}
\renewcommand{\tocsubsubsection}[2]{\hspace{2em}\oldtocsubsubsection{#1}{#2}}
\newcommand{\NB}[1]{\todo[color=gray!40]{#1}}
\newcommand{\TODO}[1]{\todo[color=red]{#1}}
\newcommand{\NB}[1]{}
\newcommand{\TODO}[1]{}
\renewcommand{\todo}[1]{}
\renewcommand{\todo}[1]{}
\title{On the high-dimensional geography problem}
\author{Robert Burklund}
\address{Department of Mathematics, MIT, Cambridge, MA, USA}
\email{burklund@mit.edu}
\author{Andrew Senger}
\address{Department of Mathematics, MIT, Cambridge, MA, USA}
\email{senger@mit.edu}
\begin{document}
\begin{abstract}


  In 1962, Wall showed that smooth, closed, oriented, $(n-1)$-connected $2n$-manifolds of dimension at least $6$ are classified up to connected sum with an exotic sphere by an algebraic refinement of the intersection form which he called an $n$-space.

  In this paper, we complete the determination of which $n$-spaces are realizable by smooth, closed, oriented, $(n-1)$-connected $2n$-manifolds for all $n \neq 63$. In dimension $126$  the Kervaire invariant one problem remains open.
  Along the way, we completely resolve conjectures of Galatius--Randal-Williams and Bowden--Crowley--Stipsicz, showing that they are true outside of the exceptional dimension $23$, where we provide a counterexample. This counterexample is related to the Witten genus and its refinement to a map of $\mathbb{E}_\infty$-ring spectra by Ando--Hopkins--Rezk. 

  By previous work of many authors, including Wall, Schultz, Stolz and Hill--Hopkins--Ravenel, as well as recent joint work of Hahn with the authors, these questions have been resolved for all but finitely many dimensions, and the contribution of this paper is to fill in these gaps.

%
%
\end{abstract}
\maketitle

\setcounter{tocdepth}{1}
\tableofcontents
\vbadness 5000


\section{Introduction} \label{sec:intro}
A classical problem in differential topology is the following:

\begin{prb} \label{prb:main}
  Classify, or enumerate, all smooth, closed, oriented, $(n-1)$-connected manifolds of dimension $2n$.
\end{prb}

Early progress includes both Adams's solution to the Hopf invariant one problem \cite{AdamsHopf} and Milnor's discovery of exotic spheres \cite{MilnorTalk}. A major advance was made by Wall in his 1962 paper \cite{Wall62}. Wall showed that the diffeomorphism type of a smooth, closed, oriented, $(n-1)$-connected $2n$-manifold of dimension at least $6$ is determined, up to connected sum with a homotopy sphere, by the middle homology group, the intersection pairing, and the so-called normal bundle data.\footnote{The restriction to dimension at least $6$ is inherited from the the use of the Whitney trick in Smale's study of handlebody decompositions.} Wall refers to such a collection of algebraic invariants as an \emph{$n$-space}. The precise definition of an $n$-space will be given in \Cref{sec:class}. To enumerate all smooth, closed, oriented, $(n-1)$-connected $2n$-manifolds in terms of $n$-spaces, it therefore suffices to answer the following two questions:

\begin{enumerate}
  \item Which $n$-spaces may be realized by smooth, closed, oriented, $(n-1)$-connected $2n$-manifolds?
  \item Given an $n$-space which is realized by a manifold $M$, for which homotopy spheres $\Sigma$ are $\Sigma \# M$ and $M$ diffeomorphic?
\end{enumerate}

In analogy with the study of smooth structures on simply-connected four dimensional manifolds, we refer to these as the high-dimensional \textbf{geography} and \textbf{botany} problems.
Following a great deal of work over the past half-century, both the high-dimensional geography and botany problems have been resolved in all but finitely many dimensions \cite{Wall62,KervaireMilnor,BPKervaire,Wall67,Kosinski,MahTanDiff,BrowderKervaire,Schultz,Lampe,BJMKervaire,StolzBook,StolzbP,HHR,Boundaries}.

Developments after 1987 include the work of Hill--Hopkins--Ravenel on the Kervaire invariant one problem, which, when combined with work of Stolz, settled the geography problem for all odd $n > 135$, and work of Hahn and the authors settling the geography problem when $n > 124$ is a multiple of $4$, as well as the botany problem when $n > 232$ is congruent to $1$ modulo $8$.

In this paper, we complete the solution to the high-dimensional geography problem outside of dimension $126$, where the answer is contingent on the resolution of the Kervaire invariant one problem.
Our answer is phrased in terms of certain $n$-space invariants studied by Wall.
Although some of these invariants such as the signature and the Kervaire invariant, $\Phi$, are likely to be familiar to the reader, others such as the middle-homology-class $\chi$ might not be. In \Cref{sec:class} we recall the definitions for all of the invariants we use.

\begin{thm} [Proven as \Cref{thm:geography}] \label{thm:geographyintro}
  Suppose that $ n \geq 3 $.
  With the exception of finitely many $n$, an $n$-space $(H, H \otimes H \to \Z, \alpha)$ is realized by a smooth, closed, oriented, $(n-1)$-connected $2n$-manifold if and only if the following conditions hold:
  \begin{enumerate}
    \item If $n \equiv 0 \mod 4$, then
      $ \sig + 4 s(Q)_{n/2} \chi^2 \equiv 0 \mod \sigma_{n/2} $.
    \item If $n \equiv 2 \mod 4$, then
      $\sig \equiv 0 \mod \sigma_{n/2} $.
    \item If $n \equiv 1 \mod 2$, then $\Phi = 0$.
  \end{enumerate}
  The full list of exceptions is as follows:
  \begin{itemize}
  \item If $n = 3,7,15,31$, then every $n$-space is realizable.
  \item If $n = 63$, then every $n$-space is realizable if there exists a closed smooth manifold of Kervaire invariant one in dimension $126$. Otherwise, an $n$-space is realizable if and only if $\Phi = 0$.
  \item If $n = 4$ or $8$, then instead of condition (1) we require $\sig - \chi^2 \equiv 0 \mod \sigma_{n/2}$.
  \item If $n = 9$, we require condition (3) and demand further that $\varphi(\chi) = 0$.
  \item If $n = 12$, we require condition (1) and demand further that $\chi^2 \equiv 0 \mod 4$.
  \end{itemize}
\end{thm}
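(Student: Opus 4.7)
The plan is to split the theorem into necessity and sufficiency of the listed conditions, and reduce each direction to plumbing, boundary surgery, and a finite collection of explicit stable-homotopy computations.

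\textbf{Setup via plumbing.} For an $n$-space $X = (H, H \otimes H \to \Z, \alpha)$, Wall's plumbing construction produces a compact, $(n-1)$-connected $2n$-manifold $W_X$ whose associated $n$-space is exactly $X$, uniquely determined by $X$ up to diffeomorphism rel boundary. Removing a small open disk from any closed $(n-1)$-connected realization of $X$ reproduces a manifold of this form. Hence $X$ is realizable if and only if, after a choice of interior modification, $\partial W_X$ can be converted into a standard sphere (equivalently, into a homotopy sphere, then into $S^{2n-1}$ by connected sum with an element of $bP_{2n}$); filling in with a disk recovers the closed realization.

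\textbf{Necessity.} I would deduce the congruences from classical invariants of closed $(n-1)$-connected manifolds. For $n$ odd, $\Phi$ agrees with the Kervaire invariant, which must vanish outside the six exceptional dimensions where the Kervaire invariant one problem has a positive answer. For $n$ even, the Hirzebruch signature theorem combined with $|bP_{2n}| = \sigma_{n/2}$ forces $\sig(M)$ modulo $\sigma_{n/2}$ to be expressible through middle-dimensional Pontryagin numbers, and the correction term $4 s(Q)_{n/2} \chi^2$ records how this Pontryagin number depends on $\alpha$ via a Wu-type formula. The small-dimension modifications (the replacement of the correction by $-\chi^2$ for $n = 4, 8$, the extra requirement $\varphi(\chi) = 0$ at $n = 9$, and the congruence $\chi^2 \equiv 0 \bmod 4$ at $n = 12$) arise from Hopf-invariant-type characteristic-class identities available only in those dimensions.

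\textbf{Sufficiency.} Given $X$ satisfying the stated conditions, I would identify the obstruction to converting $\partial W_X$ into a homotopy sphere with the invariant extracted from $X$ in the previous step: the Arf--Kervaire invariant for $n$ odd, and the signature defect in $bP_{2n}$ for $n$ even. Kervaire--Milnor surgery on the interior of $W_X$ then yields $W_X'$ with $\partial W_X'$ a homotopy sphere, and capping off produces a closed $(n-1)$-connected $2n$-manifold realizing $X$.

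\textbf{Main obstacle.} The real work lies in the finitely many dimensions not already addressed by Wall, Schultz, Stolz, Hill--Hopkins--Ravenel, and Hahn--Burklund--Senger. For each such dimension I would carry out a direct Adams or synthetic spectral-sequence computation in the relevant stable stem, and match its output against an explicit plumbing or surgery construction. I expect the most delicate case to be $n = 12$, where the extra congruence $\chi^2 \equiv 0 \bmod 4$ should emerge from the Ando--Hopkins--Rezk $\sigma$-orientation $\MStr \to \tmf$: this yields a $\tmf$-valued refinement of the Witten genus whose mod-$4$ behaviour supplies an obstruction beyond the signature, and confirming that this is both necessary and the only new obstruction requires precise control of $\tmf$ and $\pi_\ast \MStr$ in degree $24$.
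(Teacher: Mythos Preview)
The proposal has a genuine gap in the sufficiency direction. You write that for $n$ even the obstruction to capping off $\partial W_X$ is ``the signature defect in $bP_{2n}$,'' and then invoke Kervaire--Milnor surgery. But by Wall's theorem $\partial W_X$ is \emph{already} a homotopy sphere; the question is whether it is the standard one, i.e.\ whether $[\partial W_X]=0$ in $\Theta_{2n-1}$. The $bP_{2n}$-component of this class is indeed determined by signature and Pontryagin data (this is the Krannich--Reinhold computation yielding $s(Q)_{n/2}$), but the $\coker(J)_{2n-1}$-component is a priori completely uncontrolled. The entire force of the paper's \Cref{thm:main} is that this $\coker(J)$-component vanishes for $n\neq 9,12$; your sufficiency argument simply assumes this. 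Interior surgery on $W_X$ cannot change the boundary, so the sentence ``Kervaire--Milnor surgery on the interior of $W_X$ then yields $W_X'$ with $\partial W_X'$ a homotopy sphere'' is not doing any work.

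The paper's route is structurally different from what you outline. It computes the map $\partial\colon A^{\langle n-1\rangle}_{2n}\to\Theta_{2n-1}$ on the explicit basis $P,Q,L,K,R$ of \Cref{prop:A-comp}; the geography conditions then just describe $\ker\partial$ in these coordinates. The $\coker(J)$-component of $\partial Q$ is handled by reducing (via Stolz's \cite[Satz~1.7]{StolzBook}) to the kernel of the unit map $\pi_{2n-1}\Ss\to\pi_{2n-1}\MO\langle n\rangle$, and then sharpening the synthetic Adams-filtration bounds of \cite{Boundaries} against Isaksen--Wang--Xu data. Your Witten-genus remark for $n=12$ is on target, but note that in the paper it is used to show the $\coker(J)$-image is \emph{nonzero}; pinning it down as exactly $\eta^3\kappabar$ requires the separate arguments of Sections~\ref{sec:inch} and~\ref{sec:3-3}. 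Finally, you omit the additional step needed when $n\equiv 1\pmod 8$: one must show $\partial R=0$ in $\Theta_{2n-1}$ (not merely in $\coker(J)$), which the paper does via Schultz's identity $\partial R=\partial Q\cdot\eta^2$ and the vanishing of Bredon's pairing on $bP$.
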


In his 1962 paper, Wall showed for $n \geq 3$ that $n$-spaces lie in bijection with diffeomorphism classes of oriented, $(n-1)$-connected smooth $2n$-manifolds with boundary a homotopy sphere. An $n$-space may be realized precisely when this homotopy sphere may be filled in, i.e. when it is diffeomorphic to the standard $(2n-1)$-sphere. Therefore, the high-dimensional geography problem is intimately related to the following question:


\begin{qst}\label{qst:main}
  Given an integer $n > 2$, which $(2n-1)$-dimensional homotopy spheres arise as the boundary of an $(n-1)$-connected $2n$-manifold?
\end{qst}

The answer to \Cref{qst:main} invokes knowledge of the Kervaire--Milnor group of homotopy spheres, so we begin by recalling its basic structure \cite{KervaireMilnor}.
For $m > 4$, let $\Theta_m$ denote the group of oriented, smooth, closed manifolds $\Sigma$ that are homotopy equivalent to the $m$-sphere, where the group operation is the connected sum. The Kervaire--Milnor exact sequence
\[0 \to \bp_{m+1} \to \Theta_{m} \to \coker(J)_{m}\]
expresses $\Theta_m$ in terms of the finite cyclic group $\bp_{m+1}$ and the much more complicated group $\coker(J)_{m}$. Geometrically, the group $\bp_{m+1}$ is the subgroup of $\Theta_{m}$ consisting of those homotopy spheres which bound parallelizable manifolds.

\begin{thm} \label{thm:main}
  Suppose that $n > 2$ and $n \neq 9, 12$.
  Then a $(2n-1)$-dimensional homotopy sphere is the boundary of an $(n-1)$-connected smooth $2n$-manifold if and only if it also bounds a parallelizble manifold.

  A homotopy $17$-sphere $\Sigma$ is the boundary of an $8$-connected $18$-manifold if and only if
  \[ [\Sigma] \in \{0, \eta \eta_4\} \subset \coker(J)_{17}. \]
  A homotopy $23$-sphere $\Sigma$ is the boundary of an $11$-connected $24$-manifold if and only if
  \[ [\Sigma] \in \{0, \eta^3 \kappabar\} \subset \coker(J)_{23}. \]
\end{thm}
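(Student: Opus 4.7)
The plan is to reduce \Cref{thm:main} to the geography theorem (\Cref{thm:geographyintro}) via a cancellation argument. The ``if'' direction is classical Kervaire--Milnor surgery: given a parallelizable $P^{2n}$ with $\partial P = \Sigma$, surger interior spheres of dimension less than $n$ (whose stable normal bundles are trivial, since $P$ is parallelizable) to produce an $(n-1)$-connected manifold with the same boundary $\Sigma$.

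For the ``only if'' direction, let $M$ be an $(n-1)$-connected $2n$-manifold with $\partial M = \Sigma$, classified by Wall's $n$-space $(H, \mu, \alpha)$. The strategy is to find a closed $(n-1)$-connected $2n$-manifold $N$ whose normal-bundle function is $-\alpha$, then form the interior connected sum $W = M \, \#_{\mathrm{int}} \, \bigl(N \setminus \mathrm{int}(D^{2n})\bigr)$. Then $W$ is $(n-1)$-connected with $\partial W = \Sigma$, and its $n$-space is the direct sum, so its normal-bundle function is $\alpha \oplus (-\alpha) = 0$. An $(n-1)$-connected $2n$-manifold with boundary and vanishing normal-bundle function is stably parallelizable (the relevant handle decomposition has only $0$- and $n$-handles up to homotopy, and the classifying map of the tangent bundle is trivial on the $n$-skeleton). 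Thus $\Sigma$ bounds a parallelizable manifold and $\Sigma \in \bp_{2n}$.

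The remaining question is existence of such an $N$. By \Cref{thm:geographyintro}, an $n$-space with prescribed $\alpha = -\alpha_M$ is realized by a closed $(n-1)$-connected manifold precisely when the signature, Kervaire, and (for the exceptional $n$) extra $\chi$-conditions are met. The signature and Kervaire invariants may always be adjusted---by direct sum with copies of the $E_8$-form, shifting the signature by $\pm 8$, or by Kervaire plumbings shifting $\Phi$---independently of $\alpha$. Hence for $n > 2$ with $n \notin \{9, 12\}$ (and outside the small exceptional dimensions already handled in the literature), the $N$ exists and we conclude $\Sigma \in \bp_{2n}$.

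For $n \in \{9, 12\}$, the geography theorem imposes the additional constraints $\varphi(\chi) = 0$ (for $n = 9$) and $\chi^2 \equiv 0 \bmod 4$ (for $n = 12$), which are functions of $\alpha$ and cannot be cancelled by modifying $\mu$. When $M$'s $\alpha$ violates the relevant condition, the cancellation fails and $[\Sigma]$ projects to a nonzero class in $\coker(J)_{2n-1}$. The main work---and the hardest part of the argument---is identifying this class explicitly. For $n = 9$, I expect $\varphi(\chi) \in \Z/2$ to correspond to $\eta \eta_4$ via a Hopf-invariant-type computation on the attaching map of the relevant middle-dimensional cell; for $n = 12$, I expect $\chi^2 \bmod 4$ to correspond to $\eta^3 \kappabar$ via the Ando--Hopkins--Rezk $E_\infty$-orientation $\MStr \to \tmf$, exploiting the cusp form $\Delta$ in degree $24$. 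To complete the proof one must (i) construct explicit $(n-1)$-connected $18$- and $24$-manifolds whose boundaries realize $\eta \eta_4$ and $\eta^3 \kappabar$ respectively (likely via plumbing constructions combined with a boundary-class computation), and (ii) show via the above $\tmf$-theoretic obstruction that no larger subset of $\coker(J)$ can arise as a boundary.
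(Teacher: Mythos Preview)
Your proposal is circular: in this paper, \Cref{thm:geographyintro} is \emph{deduced from} \Cref{thm:main}, not conversely. The proof of \Cref{thm:geography} rests on \Cref{lem:partial}, whose parts (2) and (5) explicitly invoke \Cref{thm:main} to pin down the $\coker(J)$-components of $\partial(Q)$ and $\partial(R)$. There is also a concrete error in your cancellation step: for $W = M \#_{\mathrm{int}} N$ one has $H_n(W) = H_M \oplus H_N$ with normal-bundle function $(x,y) \mapsto \alpha_M(x) + \alpha_N(y)$, so $\alpha \oplus (-\alpha)$ is \emph{not} the zero function (evaluate at $(x,0)$), and $W$ is stably parallelizable only if $M$ already was. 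One cannot cancel the obstruction to stable parallelizability by interior connected sum.

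The paper's route is entirely different, and the generic cases are where most of the new work lies. Via \cite[Satz~1.7]{StolzBook} and a reduction of Schultz, \Cref{thm:main} is translated into the homotopy-theoretic \Cref{thm:unit-ker}: for the relevant $n$, the kernel of $\pi_{2n-1}\Ss \to \pi_{2n-1}\MO\langle n\rangle$ is exactly the image of $J$, together with the named exceptional classes when $n=9,12$. This is then established case by case in Sections~\ref{sec:inch}--\ref{sec:medium} using Adams-filtration bounds via synthetic spectra, a power-operation argument at $n=9$, a $3$-primary $\tmf$ comparison at $n=12$, and an $\Ext$ computation at $n=64$. The Witten-genus/$\tmf$ input you anticipate does appear (\Cref{sec:witten}), but it is used to show that the image in $(\coker(J)_{23})_{(2)}$ is nonzero---i.e., that $n=12$ is genuinely exceptional---not to run a reduction to geography.
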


\begin{rmk} \label{rmk:new}
  \Cref{thm:main} is new in the the following cases: 
  \begin{itemize}
    \item When $n \equiv 0 \mod 4$ and $12 \leq n \leq 124$.
    \item When $n \equiv 1 \mod 8$ and $9 \leq n \leq 129$.
  \end{itemize}
  In the remaining cases the attribution is as follows:
  \begin{itemize}
  \item When $3 \leq n \leq 8$, it is due to Wall \cite{Wall62}.
  \item When $n \equiv 3,5,6,7 \mod 8$, it is a corollary of work of Wall and Kervaire--Milnor \cite{Wall62} \cite{KervaireMilnor}.
  \item When $n \equiv 2 \mod 8$, it is due to Schultz \cite[Corollary 3.2 and Theorem 3.4(i)]{Schultz}.
  \item When $n \equiv 1 \mod 8$ and $n \geq 129$, it is due to Stolz \cite[Theorem B]{StolzBook}.\footnote{In \cite[Theorem B]{StolzBook}, Stolz claims this result for all $n \geq 113$, and in this case Stolz's proof is in fact valid for $n \geq 105$. However, Stolz made crucial use of a theorem announced by Mahowald, whose statement appears in his work as \cite[Satz 12.9]{StolzBook}, of which no proof has appeared in the literature. A similar theorem was proven by Hahn and the authors \cite[Section 15]{Boundaries}, which when plugged into Stolz's argument gives the result for $n \geq 129$.}
  \item When $n \equiv 0 \mod 4$ and $n \geq 128$ it is due to Hahn and the authors \cite[Theorem 8.5]{Boundaries}.
  \end{itemize}
  
\end{rmk}
%
%
%

\Cref{thm:main} resolves the following conjecture of Galatius and Randal-Williams, which is equivalent to Conjectures A and B of \cite{GRAbelianQuotients}:

\begin{cnj} \label{cnj:G-RW}
  Suppose that $n \equiv 0 \mod 4$. Then a $(2n-1)$-dimensional homotopy sphere is the boundary of an $(n-1)$-connected $2n$-manifold if and only if it also bounds a parallelizable manifold.
\end{cnj}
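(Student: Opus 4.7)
The plan is to derive \Cref{cnj:G-RW} as a direct corollary of \Cref{thm:main}. The converse direction of the conjecture is automatic: a homotopy sphere bounding a parallelizable manifold lies in $\bp_{2n}$ by definition, and every class in $\bp_{2n}$ admits an $(n-1)$-connected filling via the Kervaire--Milnor plumbing construction.

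For the forward direction, I would split into the generic case and the exceptional one. When $n \equiv 0 \bmod 4$ and $n \neq 12$, the first clause of \Cref{thm:main} states exactly that the $(2n-1)$-dimensional homotopy spheres bounding $(n-1)$-connected $2n$-manifolds coincide with those bounding parallelizable manifolds, verifying the conjecture in these dimensions.

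For $n = 12$, the special clause of \Cref{thm:main} identifies the homotopy 23-spheres bounding 11-connected 24-manifolds as the preimages in $\Theta_{23}$ of $\{0, \eta^3 \kappabar\} \subset \coker(J)_{23}$. Since $\bp_{24}$ is precisely the kernel of $\Theta_{23} \to \coker(J)_{23}$, the conjecture at $n = 12$ reduces to verifying $\eta^3 \kappabar = 0$ in $\coker(J)_{23}$. However, $\eta^3 \kappabar$ is a well-known nonzero class in $\pi_{23}\Ss$ (detected at high Adams filtration in $\Ext_{\A}^{*,*}(\F_2,\F_2)$) that a routine check places outside the image of $J$. Thus the conjecture \emph{fails} at $n = 12$, and any homotopy 23-sphere representing $\eta^3 \kappabar$ serves as an explicit counterexample.

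The genuine difficulty resides not in this deduction but in \Cref{thm:main} itself. The hardest input is the production of an 11-connected 24-manifold whose boundary represents $\eta^3 \kappabar$; this is the striking exceptional construction that forces the counterexample in dimension $23$ and, as signaled in the abstract, is built using the $\E_\infty$-refinement of the Witten genus due to Ando--Hopkins--Rezk. Everything else in the derivation of \Cref{cnj:G-RW} is bookkeeping around \Cref{thm:main}.
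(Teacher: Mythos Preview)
Your proposal is correct and matches the paper's approach exactly: the paper states immediately after \Cref{cnj:G-RW} that \Cref{thm:main} resolves the conjecture, showing it is false for $n=12$ and true otherwise. Your analysis of both directions and the identification of the $n=12$ counterexample via the nonvanishing of $\eta^3\kappabar$ in $\coker(J)_{23}$ is precisely how the paper draws this conclusion.
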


In particular, we learn that the conjecture is is false for $n = 12$ and true otherwise.
The interest of Galatius and Randal-Williams in this conjecture was spurred on by their work on mapping class groups of highly connected manifolds.
In \Cref{sec:mapping}, we will briefly record some applications of \Cref{thm:main} to the computation of mapping class groups.

As we shall see in \Cref{sec:stein}, \Cref{thm:main} also resolves the following conjecture of Bowden, Crowley and Stipsicz:

\begin{cnj}[{\cite[Conjecture 5.9]{SteinFillable}}]
  An odd dimensional homotopy sphere admits a Stein fillable contact structure if and only if it also bounds a parallelizable manifold.
\end{cnj}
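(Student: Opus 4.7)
The plan is to reduce to \Cref{thm:main} via the handle-theoretic description of Stein domains. The easier direction, that $\Sigma \in \bp_{2n}$ implies $\Sigma$ is Stein fillable, is essentially known: homotopy spheres in $\bp_{2n}$ arise as links of Brieskorn singularities, whose Milnor fibers provide explicit Stein fillings. For the converse, suppose $\Sigma^{2n-1}$ admits a Stein filling $W^{2n}$. By a theorem of Cieliebak--Eliashberg, $W$ admits a handle decomposition with all handles of index $\leq n$, and by subcritical Stein surgery we may further modify $W$ to an $(n-1)$-connected Stein filling with the same contact boundary $\Sigma$. \Cref{thm:main} then forces either $\Sigma \in \bp_{2n}$, or $n = 9$ and $[\Sigma] = \eta\eta_4 \in \coker(J)_{17}$, or $n = 12$ and $[\Sigma] = \eta^3\kappabar \in \coker(J)_{23}$.

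For $n = 9$, we must rule out Stein fillability of $\eta\eta_4$, confirming the conjecture in dimension $17$. The plan is to exploit the stable almost complex structure carried by any Stein filling: since every Stein domain is in particular a complex manifold, the $(n-1)$-connected filling above carries a compatible stable almost complex structure. A direct characteristic-class analysis, comparing the constraints imposed by the almost complex structure against the invariants that detect $\eta\eta_4$, should obstruct the boundary of such a filling from being $\eta\eta_4$.

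For $n = 12$, in contrast, the conjecture fails, and the main obstacle is constructing an explicit Stein filling of $\eta^3\kappabar$. The construction exploits the fact that $\eta^3\kappabar \in \coker(J)_{23}$ is detected by the Witten genus $\sigma \colon \MStr \to \tmf$ of Ando--Hopkins--Rezk. Concretely, one builds a Stein domain of complex dimension $12$—likely arising from a complex algebraic singularity—whose boundary is a homotopy $23$-sphere, and verifies via a $\tmf$-characteristic-number computation on a closed capped-off manifold that the boundary represents $\eta^3 \kappabar$ in $\Theta_{23}$ rather than being trivial. The hard part is identifying such a variety and matching its boundary class; once found, the failure of the conjecture in dimension $23$ is an immediate consequence.
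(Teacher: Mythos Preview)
Your reduction to \Cref{thm:main} via handle theory is correct and matches the paper's approach, which cites \cite[Proof of Theorem 5.4]{SteinFillable} for the statement that $\Sigma^{2q+1}$ is Stein fillable if and only if $[\Sigma]$ lies in the image of the almost-complex bordism group $A^{U\langle q+1\rangle}_{2q+2}$ in $\coker(J)_{2q+1}$. Your $n=9$ concern is also correctly diagnosed---the almost complex structure is indeed the obstruction---but this case ($17$-spheres, $q=8$ in the paper's indexing) is already covered by \cite[Theorem 5.4]{SteinFillable}, so no new argument is required.

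The real gap is your plan for $n=12$. You propose to build an explicit complex singularity whose link represents $\eta^3\kappabar$ and to verify this via a $\tmf$ computation on a capped-off closed manifold. This is far harder than necessary, and there is no evident candidate singularity; it is not even clear such a plan could be carried out. The paper's argument is nearly trivial once \Cref{thm:main} is in hand: since $\pi_{11}(\U) \to \pi_{11}(\SO)$ is an isomorphism, the forgetful map from the almost-complex bordism group to $A^{\langle 11\rangle}_{24}$ is an isomorphism. Hence the $11$-connected $24$-manifold $Q$ already supplied by \Cref{thm:main}, with $[\partial Q]=\eta^3\kappabar$, lifts to the almost-complex side, and the Bowden--Crowley--Stipsicz criterion immediately produces a Stein filling of $\partial Q$. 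No explicit construction or further characteristic-number computation is needed; the counterexample to the conjecture falls out as a direct corollary of \Cref{thm:main} together with this single observation about $\pi_{11}$.
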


More precisely, we show in \Cref{thm:stein} that the conjecture is true for all dimensions other than $23$, and that a $23$-dimensional homotopy sphere $\Sigma$ admits a Stein fillable contact structure if and only if
\[ [\Sigma] \in \{0, \eta^3 \kappabar\} \subset \coker(J)_{23}. \]

In \cite[Proposition 5.3]{SteinFillable}, Bowden, Crowley and Stipsicz show how to equip an odd-dimensional homotopy sphere which bounds a parallelizable manifold with an explicit, geometrically defined Stein fillable contact structure. This leads us to ask the following question:
%
\begin{qst}
  Given an exotic $23$-sphere $\Sigma$ with $[\Sigma] = \eta^3 \kappabar \in \coker(J)_{23}$, can one construct an explicit Stein fillable contact structure on $\Sigma$ in a geometric way? Can this be done in such a way to shed light on what is special about dimension $23$ and the class $\eta^3 \kappabar \in \coker(J)_{23}$?
\end{qst}

Let us take a moment to discuss why the dimension $23$ is exceptional, providing counterexamples to both the conjectures of Galatius--Randal-Williams and Bowden--Crowley--Stipsicz. We proceed by contradiction: supposing that \Cref{cnj:G-RW} held for $n=12$, the work of Wall \cite{Wall62} implies the existence of a closed, oriented, $11$-connected smooth $24$-manifold with certain Pontryagin numbers. This manifold would have to admit a string structure, and so we may consider its Witten genus, which can be computed in terms of the Pontryagin numbers.

The Witten genus of a closed string manifold is an integral modular form. However, not every integral modular form is the Witten genus of a closed string manifold: a nontrivial restriction on the image of the Witten genus is provided by the Ando--Hopkins--Rezk string orientation \cite{AHR}, which implies that the Witten genus factors through the homotopy groups of the connective spectrum $\tmf$ of topological modular forms.\footnote{In fact, a folk theorem of Hopkins and Mahowald, which has now been written up by Devalapurkar \cite{Sanath}, shows that this is the only restriction on the image of the Witten genus: the map $\Omega^{\Str} _{n} = \pi_{n} \MStr \to \pi_{n} \tmf$ is surjective for all $n$.}
For example, the weight $12$ modular form $\Delta$ does not lie in the image of the Witten genus, instead only multiples of $24 \Delta$ lie in the image. Using this restriction, we are able to show that the putative manifold constructed above cannot exist.

As far as the authors are aware, thus far there have been few concrete geometric applications of the Witten genus and the Ando--Hopkins--Rezk string orientation.\footnote{However, see \cite{KrannichExotic} for a recent application of the Ando--Hopkins--Rezk orientation to the question of how taking the connected sum with an exotic sphere affects the mapping class group of a highly-connected manifold.}
We were therefore pleased to find an application for this beautiful theory in this work.

\begin{rmk}
  The argument sketched above, whose details are the subject of \Cref{sec:witten}, is modeled on a classical argument making use of the $\Ah$-genus, which shows that there is no closed, simply-connected, smooth $4$-manifold whose intersection form is isomorphic to the $E_8$-form, though Freedman has famously shown the existence of such a topological $4$-manifold \cite{Freedman}.
  Indeed, if such a smooth $4$-manifold existed then it would have to admit a spin structure and its $\Ah$-genus would be equal to $1$.
  But the fact that the $\Ah$-genus factors as the composite 
  \[\Omega^{\Spin}_{4} \cong \pi_{4} \MSpin \to \pi_{4} \mathrm{ko} \to \pi_{4} \mathrm{ku} \cong \Z,\]
  where the first map is induced by the Atiyah-Bott-Shapiro orientation \cite{ABS}, and the second map is induced by tensoring up a real vector bundle to the complex numbers, implies that the $\Ah$-genus of a $4$-manifold is even, since the image of the second map is equal to $2\Z \subset \Z$.
  Since the signature of a simply-connected spin $4$-manifold is equal to $-\frac{1}{8}$ times its $\Ah$-genus, this argument also proves Rokhlin's theorem, which states that the signature of such a manifold must be divisible by $16$. 

  It is interesting to note that the extra integrality for the $\Ah$-genus used above is apparent from the interpretation of the $\widehat{A}$-genus as the index of the Dirac operator on the spinor bundle, as the rank $4$ real Clifford algebra contains the quaternions.
  We hope that one day it will be possible to see a similar geometric origin for the restriction on the Witten genus of a closed string manifold used above. At the moment the appropriate replacement for Clifford algebras is not yet clear, but see \cite{STEll, CNet}.


\end{rmk}

\subsection{An outline of the paper}
In \Cref{sec:class}, we deduce \Cref{thm:geographyintro} from \Cref{thm:main}, solving the high-dimensional geography problem for manifolds of dimension other than $126$.
In \Cref{sec:witten}, we exploit the integrality properties of the Witten genus to prove the existence of an exceptional $11$-connected $24$-manifold $M_{24}$ whose boundary $\partial M_{24}$ is a homotopy sphere which does not bound a parallelizable manifold.

In \Cref{sec:homotopy}, we lay the groundwork for the proof of \Cref{thm:main}, and we divide the proof into four parts.
In \Cref{sec:inch}, we improve upon the key argument from \cite[Section 10]{Boundaries} to prove several cases of \Cref{thm:main}. As a consequence, we are also able to show that $[\partial M_{24}] = \eta^3 \kappabar \in (\coker(J)_{23})_{(2)}$.
In \Cref{sec:2-9}, we use power operations to prove the existence of the exceptional almost closed manifolds in dimension $18$.
In \Cref{sec:3-3}, we again exploit the Ando-Hopkins-Rezk string orientation, this time in order to resolve the $3$-primary aspects of the $24$-dimensional case.
In \Cref{sec:medium}, we make a short homological algebra argument necessary to finish the $128$-dimensional case of \Cref{thm:main}.

Finally, in \Cref{sec:app}, we briefly discuss the applications of our work to Stein fillable homotopy spheres and mapping class groups of highly connected manifolds.

\subsection{Acknowledgments}
The authors would like to express their thanks to Manuel Krannich for introducing them to the conjecture of Galatius and Randal-Williams. 
The authors would also like to thank Jeremy Hahn and Zhouli Xu for helpful conversations regarding the contents of this paper. They would further like to thank Diarmuid Crowley, Jeremy Hahn, Manuel Krannich and Haynes Miller for useful comments on a draft of this paper.
\todo{Others?}

During the course of this work, the second named author was supported by an NSF GRFP fellowship under Grant No. 1122374.

\section{Classification of $(n-1)$-connected $2n$-manifolds} \label{sec:class}
In this section, we will reduce the high-dimensional geography problem to \Cref{thm:main} and state the answer as \Cref{thm:geography}. While many cases of \Cref{thm:geography} were previously known, we hope that the reader will find it useful to have a precise answer collected in a single omnibus theorem. 

\begin{cnv}
  All manifolds in this section and \Cref{sec:witten} will be assumed compact, smooth and oriented. We will let $n$ denote an integer greater than $2$.
\end{cnv}

\subsection{The work of Wall}

Let us begin by recalling Wall's work on the classification of closed, $(n-1)$-connected $2n$-manifolds \cite{Wall62}.
Given such a manifold $M$, Wall associates the following data:

\begin{itemize}
  \item the middle homology group $H = H_n (M; \Z)$, which is a finite-dimensional free abelian group,
  \item the intersection pairing $H \otimes H \to \Z$, which is a unimodular bilinear form, symmetric if $n$ is even and skew-symmetric if $n$ is odd, and
  \item the normal bundle data, which is a map of sets $\alpha : H \to \pi_{n} \BSO(n)$ assigning to $x \in H$ the normal bundle of an embedded sphere representing $x$. We will recall the values of the groups $\pi_n \BSO(n)$ below.
\end{itemize}

%
Let $\tau_{S^n} \in \pi_{n} \BSO(n)$ correspond to the tangent bundle of $S^n$.
Moreover, let $J: \pi_{n} \BSO(n) \to \pi_{2n-1} S^{n}$ denote the unstable $J$-homomorphism and let $H : \pi_{2n-1} S^n \to \Z$ denote the Hopf invariant.
Then the above data satisfy the following compatibility conditions: given any $x,y \in H$, we have
\begin{align} \label{eq:wall1}
  x^2 = HJ\alpha(x) \,\,\,\, \text{ and}
\end{align}
\begin{align} \label{eq:wall2}
  \alpha(x+y) = \alpha(x) + \alpha(y) + (xy)\cdot \tau_{S^n},
\end{align}
where in both cases we have used multiplication to denote the intersection product.

\begin{dfn}
  An $n$-space is a triple $(H, H\otimes H \to \Z, \alpha)$ which satisfies (\ref{eq:wall1}) and (\ref{eq:wall2}). Two $n$-spaces $(H_1, H_1 \otimes H_1 \to \Z, \alpha_1 )$ and $(H_2, H_2 \otimes H_2 \to \Z, \alpha_2 )$ are said to be isomorphic if there is an isomorphism of abelian groups $H_1 \cong H_2$ preserving the intersection forms and normal bundle data.
\end{dfn}


Wall proved that the $n$-space of a closed, $(n-1)$-connected $2n$-manifold $M$ determines $M$ up to connected sum with a homotopy sphere:

\begin{thm}[{\cite{Wall62}}] \label{thm:in}
  If two closed, $(n-1)$-connected $2n$-manifolds $M$ and $N$ have isomorphic $n$-spaces, then $M \cong N \# \Sigma$ for some homotopy sphere $\Sigma \in \Theta_{2n}$.
\end{thm}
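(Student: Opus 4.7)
The approach I would take is handlebody theoretic, following the strategy of Wall's original paper. The plan is, for each closed manifold, to excise an open $2n$-disk and consider the resulting compact $2n$-manifold $M_{0} := M \setminus \mathring{D}^{2n}$, whose boundary is a homotopy $(2n-1)$-sphere. I would argue that the $n$-space determines the diffeomorphism type of this ``almost closed'' manifold, and that reassembling the closed manifold from it introduces only the ambiguity of a homotopy $2n$-sphere.

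First I would put $M_0$ in a standard form. Because $M$ is $(n-1)$-connected and $2n \geq 6$, Morse theory combined with the Whitney trick and Smale's handle-cancellation lemma would reduce us to a handle decomposition of $M_{0}$ with a single $0$-handle and $k := \operatorname{rank}_{\Z} H_n(M;\Z)$ handles of index $n$. Handles of index strictly between $n$ and $2n$ become low-index handles in the dual decomposition of $M_{0}$ and can be absorbed into the removed disk by the same cancellation argument. Next I would show that the handlebody so obtained is determined up to diffeomorphism by the $n$-space of $M$. The $n$-handles are attached along framed embeddings $S^{n-1} \times D^{n} \hookrightarrow S^{2n-1}$; after handle slides, which act on the attaching data by the natural transitive action on bases of $H$, the attaching data may be brought into a normal form in which the cores form an unlinked collection with prescribed linking numbers. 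The pairwise linking numbers would recover the intersection form, while the framings of the attaching spheres, translated into elements of $\pi_n \BSO(n)$ via the clutching isomorphism $\pi_{n-1} SO(n) \cong \pi_n \BSO(n)$, would recover the normal-bundle function $\alpha$. Conversely, any isomorphism of $n$-spaces could be realized by a sequence of handle slides, reorientations, and framing adjustments compatible with Wall's formula (\ref{eq:wall2}), producing a diffeomorphism $M_{0} \cong N_{0}$.

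Finally, I would recover $M$ from $M_{0}$ by gluing a $2n$-disk along the homotopy sphere $\partial M_{0}$, and similarly for $N$. Any two such gluings of $D^{2n}$ to a fixed bounded manifold differ by an orientation-preserving self-diffeomorphism of $S^{2n-1}$, and the resulting closed manifolds differ by connected sum with the twisted sphere associated to this diffeomorphism, an element of $\Theta_{2n}$. Transporting the gluing used to assemble $N$ along the diffeomorphism $M_{0} \cong N_{0}$ of the previous step would therefore produce an identification $M \cong N \# \Sigma$ for some $\Sigma \in \Theta_{2n}$, as desired.

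The main obstacle will be the realization step: showing that every algebraic isomorphism of $n$-spaces lifts to a geometric equivalence of handlebodies. The transformation of framings under handle slides is delicate and is precisely what forces the quadratic correction $(xy)\cdot \tau_{S^n}$ in (\ref{eq:wall2}); beyond this, one must contend with the possibility that attaching spheres $S^{n-1} \hookrightarrow S^{2n-1}$ are knotted in the sense of Haefliger, and argue that such knotting can always be undone using the remaining handles as auxiliary machinery. Both issues were resolved by Wall via an inductive argument on the rank of $H$, and reproducing this careful handle-by-handle analysis would be the technical core of the proof.
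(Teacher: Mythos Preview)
The paper does not give its own proof of this theorem; it is stated with attribution to Wall \cite{Wall62} and used as a black box. Your sketch is essentially Wall's original handlebody argument, so in that sense it matches the only proof the paper points to. One small correction: for $n \geq 3$ the attaching spheres $S^{n-1} \hookrightarrow S^{2n-1}$ lie in Haefliger's stable range and are individually unknotted, so the ``Haefliger knotting'' issue you flag is not actually present here; the genuine subtlety is the link (not the knot) type of the collection of cores together with their framings, and this is exactly what the intersection form and $\alpha$ encode.
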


Moreover, call a manifold almost closed if its boundary is a homotopy sphere.
Then, one may equally well associate an $n$-space to an almost closed, $(n-1)$-connected $2n$-manifold.
Wall's invariant is even more powerful in this case.

\begin{thm}[\cite{Wall62}]
  The map associating an $n$-space to an almost closed, $(n-1)$-connected $2n$-manifold induces a bijection between the set of almost closed, $(n-1)$-connected $2n$-manifolds up to diffeomorphism and the set of $n$-spaces up to isomorphism.
\end{thm}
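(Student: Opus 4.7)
The plan is to translate between the geometry of almost closed $(n-1)$-connected $2n$-manifolds and the algebraic data of $n$-spaces via handle decomposition theory. The strategy splits into three parts: (i) extract the $n$-space from a preferred handlebody presentation of $M$, (ii) realize an arbitrary $n$-space by such a handlebody, and (iii) show that two handlebodies with isomorphic $n$-spaces are diffeomorphic.

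First I would prove that every almost closed $(n-1)$-connected $2n$-manifold admits a handle decomposition with a single $0$-handle and only $n$-handles. Smale's handle cancellation theorem, which relies on the Whitney trick and hence requires $2n \geq 6$ (i.e.\ $n \geq 3$), together with the vanishing of $\pi_i M$ for $0 < i < n$, allows one to inductively cancel all handles of index strictly between $0$ and $n$; top handles are absent because $\partial M$ is a homotopy sphere. From such a handlebody the $n$-space can be read off directly: the cores of the $n$-handles give a basis of $H = H_n(M;\Z)$, the linking numbers of their attaching $(n-1)$-spheres in $S^{2n-1} = \partial D^{2n}$ compute the intersection pairing, and the framings of the attaching embeddings $S^{n-1} \times D^n \hookrightarrow S^{2n-1}$ determine $\alpha$ via the clutching construction. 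The compatibility relations (\ref{eq:wall1}) and (\ref{eq:wall2}) then follow from standard formulas for the Hopf invariant of a framed embedded sphere and the change of framing under taking connect-sums of attaching spheres.

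For surjectivity, given an $n$-space $(H, \cdot, \alpha)$, I would construct a realizing handlebody as follows. Choose a basis $x_1, \ldots, x_k$ of $H$ and, for each $i$, build a framed embedded $(n-1)$-sphere in $S^{2n-1}$ with framing representing $\alpha(x_i)$ and pairwise linking numbers $x_i \cdot x_j$. Haefliger's embedding theorem, available in the metastable range since the codimension $n$ satisfies $2(n-1) < 2n-1$ for $n \geq 3$, says that any homotopy class of map $S^{n-1} \to S^{2n-1}$ is realized by an isotopically unique embedding, so homotopy and framing data may be prescribed freely; the relations (\ref{eq:wall1}) and (\ref{eq:wall2}) ensure no consistency obstructions. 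Attaching $n$-handles along these data to $D^{2n}$ produces the required manifold $M$.

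For injectivity, an isomorphism of $n$-spaces between two handlebodies can be realized by a sequence of handle slides in the source: handle slides act on the attaching and framing data exactly by the transformation law (\ref{eq:wall2}). After sliding, the two handlebodies have identical $n$-space data and attaching data isotopic in $S^{2n-1}$, and the isotopy extension theorem then furnishes a diffeomorphism. The hard part will be the careful tracking of framings, since the correction term $(xy) \cdot \tau_{S^n}$ in (\ref{eq:wall2}) precisely records the framing shift incurred when an attaching sphere is slid across another handle whose core it meets algebraically $xy$ times; matching this algebraic shift to the geometric one, and checking that no hidden obstructions arise when several handles are manipulated at once, constitutes the principal technical work.
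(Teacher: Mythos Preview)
The paper does not prove this theorem; it is quoted from Wall's 1962 paper \cite{Wall62} and used as a black box. There is therefore no proof in the paper to compare against.

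That said, your outline is essentially Wall's original strategy: reduce to handlebodies via Smale, encode a handlebody by the framed link of its attaching spheres, and show that the $n$-space data classifies such framed links up to the relevant moves. Two remarks on the details. First, the inequality you quote for Haefliger, $2(n-1) < 2n-1$, is vacuous; the actual metastable condition for $S^{n-1} \hookrightarrow S^{2n-1}$ is $2(2n-1) \geq 3n$, i.e.\ $n \geq 2$, so the conclusion you want does hold for $n \geq 3$. Second, your injectivity step asserts that an arbitrary isomorphism of $n$-spaces can be realized by handle slides, but handle slides only generate elementary transvections on $H$; you also need to change the basis of $H$ by an arbitrary element of $\mathrm{GL}(H)$ compatible with the form, and this requires in addition reordering handles and reversing their orientations. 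Wall handles this by first normalizing the intersection form (using the classification of unimodular forms over $\Z$ in the skew case, and a suitable basis in the symmetric case) and then matching the $\alpha$-data, rather than by attempting to realize a given isomorphism directly.
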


Together, these two theorems reduce the classification of $(n-1)$-connected $2n$-manifolds to the following two questions, which we have named in analogy with the classification of simply-connected smooth $4$-manifolds:
\begin{enumerate}
\item \textbf{(High-dimensional Geography problem)}
  Given an $n$-space $(H, H \otimes H \to \Z, \alpha)$, when is it realized by a closed, $(n-1)$-connected $2n$-manifold $M$? This is equivalent to asking when the boundary of the associated almost closed manifold is diffeomorphic to the standard $(2n-1)$-sphere.
\item \textbf{(High-dimensional Botany problem)}
  Given an $n$-space $(H, H \otimes H \to \Z, \alpha)$ which is realized by a $(n-1)$-connected $2n$-manifold $M$, what is the subgroup $I(M) \subset \Theta_{2n}$ of $\Sigma$ such that $M \# \Sigma \cong M$?
\end{enumerate}

Later, Wall gave a cobordism interpretation of the remaining aspects of the high-dimensional geography problem. Since the boundary is preserved by cobordisms of almost closed, $(n-1)$-connected $2n$-manifolds which restrict to $h$-cobordisms on the boundary, the problem may be summed up in an exact sequence of cobordism groups.

\begin{dfn}
  Let $\Omega^{\langle n-1 \rangle} _{2n}$ denote the group of closed, oriented, $(n-1)$-connected $2n$-manifolds, modulo $(n-1)$-connected oriented cobordisms.

  Furthermore, let $A^{\langle n-1\rangle} _{2n}$ denote the group of oriented, almost closed, $(n-1)$-connected $2n$-manifolds, modulo $(n-1)$-connected, oriented cobordisms restricting to $h$-cobordisms on the boundary.
\end{dfn}

\begin{prop}[{\cite[Lemma 32]{Wall67}}]
    There is an exact sequence
    \[\Theta_{2n} \to \Omega^{\langle n-1 \rangle} _{2n} \to A^{\langle n-1 \rangle} _{2n} \xrightarrow{\partial} \Theta_{2n-1},\]
    where the first map sends a homotopy sphere to its cobordism class, the second map cuts out the interior of a smoothly embedded $2n$-disk, and the last map sends an almost closed manifold to its boundary.
\end{prop}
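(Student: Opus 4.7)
The plan is to first verify that each of the three maps is a well-defined group homomorphism, and then to check exactness at the two middle terms. Well-definedness of $i$ is immediate. For $c$, I would use that all embedded $2n$-disks in a fixed $M$ are ambient isotopic (so the class $[M - \text{int}(D^{2n})]$ is independent of the choice of disk) and that an $(n-1)$-connected cobordism of closed manifolds induces one of their disk-complements by removing a tubular arc connecting a disk in each boundary component. The map $\partial$ is well-defined because an $(n-1)$-connected cobordism restricting to an h-cobordism $V$ between its boundary homotopy spheres forces them to be diffeomorphic by the h-cobordism theorem, which applies since $\dim V = 2n-1 \geq 5$ when $n \geq 3$.

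For exactness at $\Omega^{\langle n-1 \rangle}_{2n}$, I would argue $c \circ i = 0$ via the twisted double theorem: removing two disjoint open disks from a homotopy sphere $\Sigma$ produces an h-cobordism from $S^{2n-1}$ to itself, which is trivial by the h-cobordism theorem. Hence $\Sigma = D^{2n} \cup D^{2n}$, so $\Sigma - \text{int}(D^{2n}) \cong D^{2n}$ and $c(i([\Sigma])) = [D^{2n}] = 0$. Conversely, if $c([M]) = 0$ is witnessed by a cobordism $W$ from $M - \text{int}(D^{2n})$ to $D^{2n}$ in $A^{\langle n-1 \rangle}_{2n}$, then the side $V$ of $W$ is an h-cobordism from $S^{2n-1}$ to itself, hence $V \cong S^{2n-1} \times I$. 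Attaching $D^{2n} \times I$ to $W$ along $V$ yields an $(n-1)$-connected cobordism of closed manifolds from $M$ to $D^{2n} \cup_{S^{2n-1}} D^{2n}$, which is a homotopy $2n$-sphere $\Sigma$, so $[M] = i([\Sigma])$.

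For exactness at $A^{\langle n-1 \rangle}_{2n}$, the composition $\partial \circ c$ vanishes trivially since cutting an interior disk out of a closed manifold produces an almost closed manifold with standard sphere boundary. Conversely, if $[N] \in A^{\langle n-1 \rangle}_{2n}$ satisfies $\partial [N] = 0$, then $\partial N \cong S^{2n-1}$ (again by the h-cobordism theorem), and after adjusting $N$ by gluing the trivial h-cobordism I may assume $\partial N = S^{2n-1}$ on the nose, so that filling in a $2n$-disk produces a closed $(n-1)$-connected manifold $M = N \cup_{S^{2n-1}} D^{2n}$ with $c([M]) = [N]$. The main subtlety throughout is verifying that the cobordisms produced by these gluings preserve $(n-1)$-connectedness, which will follow from standard Mayer--Vietoris and van Kampen arguments using that the attaching regions (copies of $S^{2n-1}$ or $S^{2n-1} \times I$) are highly connected for $n \geq 3$.
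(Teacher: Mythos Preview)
The paper does not prove this proposition; it is quoted directly as \cite[Lemma 32]{Wall67} without argument, so there is nothing in the paper against which to compare your approach. Your sketch is the standard surgery-theoretic proof of Wall's lemma and is essentially correct.

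Two minor points. First, in the converse direction at $\Omega^{\langle n-1 \rangle}_{2n}$, after you glue $D^{2n}\times I$ to $W$ along the trivialised side $V\cong S^{2n-1}\times I$, the top end is $D^{2n}\cup_\psi D^{2n}$ for whatever diffeomorphism $\psi$ of $S^{2n-1}$ the h-cobordism trivialisation induces there, not the identity-glued $D^{2n}\cup_{S^{2n-1}}D^{2n}=S^{2n}$; you clearly intend this, since you immediately call the result a homotopy sphere $\Sigma$, but the notation is misleading. Second, in the converse direction at $A^{\langle n-1 \rangle}_{2n}$, the parenthetical ``(again by the h-cobordism theorem)'' is unnecessary: $\partial[N]=0$ in $\Theta_{2n-1}$ already means, by definition of that group, that $\partial N$ is orientation-preservingly diffeomorphic to $S^{2n-1}$. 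The connectivity checks you flag at the end all go through for the reasons you indicate, since the attaching regions are $(2n-2)$-connected and $2n-2\geq n-1$.
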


 The high-dimensional geography problem is thus equivalent to the computation of the kernel of the map
\[\partial: A^{\langle n-1 \rangle} _{2n} \to \Theta_{2n-1}\]
in terms of the associated $n$-spaces. 
The results of this paper, building upon a great deal of work in the literature, compute the map $\partial$ and thereby answer the high-dimensional geography problem for all $n \neq 63$. The remaining case $n = 63$ is equivalent to the Kervaire invariant one problem in dimension $126$.

Before proceeding, we find it helpful to unpack the information present in an $n$-space.
First, we note that, since a complete classification of unimodular lattices is not known, the possible bilinear forms are not completely enumerated. This issue will not affect us, but it is worth mentioning.
The definition of an $n$-space depended on the classification of rank $n$ vector bundles on the $n$-sphere and the class of the tangent bundle in that group. We recall Kervaire's work \cite{KervaireUnstable} on this subject. For $n$ at least 8, we have the following table of values:

\begin{center}
  \renewcommand{\arraystretch}{1.2}
  \begin{tabular}{|c||c|c|c|c|c|c|c|c|} \hline
    $n \pmod 8$ & $0$ & $ 1$ & $ 2$ & $ 3$ & $ 4$ & $ 5$ & $ 6$ & $ 7$ \\\hline
    $\pi_{n}\mathrm{BSO}(n)$ & $\Z \oplus \Z$ & $\Z/2 \oplus \Z/2$ & $\Z \oplus \Z/2$ & $\Z/2$ & $\Z \oplus \Z$ & $\Z/2$ & $\Z$ & $\Z/2$ \\\hline
  \end{tabular}
  \label{tbl:BSOn}
  \renewcommand{\arraystretch}{1.0}
\end{center}

Furthermore, the stabilization map to $\pi_n \mathrm{BSO}$ is surjective with kernel generated by $\tau_{S^n}$. In the $n \equiv 1 \mod 8$ case, $\pi_n \mathrm{BSO}(n)$ has a basis given by $\tau_{S^n}$ and $\eta \iota$, where $\iota \in \pi_{n-1} \mathrm{BSO}(n) \cong \Z$ denotes a generator. Finally, we set up several invariants of $n$-spaces and definitions which will be useful later:

\begin{itemize}
\item If $n$ is even, let $\sig$ denote the signature of the symmetric bilinear form on $H$.
\item If $n \equiv 0 \mod 4$, the composition
  $ H \xrightarrow{\alpha} \pi_{n} \mathrm{BSO}(n) \to \pi_n \mathrm{BSO} \cong \Z $
  is a linear map by (\ref{eq:wall2}) and corresponds to some element $\chi \in H$ via the unimodular bilinear form.
\item If $n \equiv 1, 2 \mod 8$, the same procedure determines an element $\chi \in H/2$.
\item If $n \equiv 0,2,4\mod 8$, the self-intersection number of $\chi$ is an integer $\chi^2$ (well-defined modulo $4$ in the $n \equiv 2 \mod 8$ case).
\item If $n \equiv 1 \mod 8$, we let $\varphi$ denote the map $ \pi_n \mathrm{BSO}(n) \to \Z/2 $ with kernel $\eta \iota$. \footnote{In \cite{Wall62}, Wall did not fix a specific choice of $\varphi$, merely asking that the direct sum of $\varphi$ with the stabilization map be an isomorphism. This leads to an ambiguity in his definition of $\Phi$ when $n \equiv 1 \mod 8$. Here we are careful to fix this specific choice so as to make \Cref{thm:geography} correct and unambiguous when $n \equiv 1 \mod 8$.}
\item If $n \equiv 1 \mod 8$, the element $\varphi(\alpha(\chi))$ gives an element in $\Z/2$ which we denote $\varphi(\chi)$.
\item If $n$ is odd, then since $\tau_{S^n}$ is sent to a generator under the map $\alpha$ ($\alpha \circ \varphi$ in the $n \equiv 1 \mod 8$ case), this map determines a quadratic refinement of the mod $2$ reduction of the intersection pairing. We let $\Phi$ denote the Arf--Kervaire invariant of this quadratic form.
\end{itemize}

Our notation for these invariants follows that in \cite{Wall62}, with the exception of writing $\sig$ for the signature instead of $\tau$. Wall shows that each of the invariants $\sig, \chi^2, \Phi, \varphi(\chi)$ descends to a linear map from $A^{\langle n-1 \rangle} _{2n}$. He has further computed the groups $A^{\langle n-1 \rangle} _{2n}$ in terms of these invariants.

\begin{prop} [{\cite{Wall62,Wall67}}] \label{prop:A-comp}
  The values of $A^{\langle n-1 \rangle} _{2n}$ are given the following table, along with a choice of basis in terms of the above invariants:
  \begin{center}
    \renewcommand{\arraystretch}{1.4}
    \begin{tabular}{|c||c|c|c|c|c|c|c|c|} \hline
      $n \pmod 8$ & $0$ & $ 1$ & $ 2$ & $ 3$ & $ 4$ & $ 5$ & $ 6$ & $ 7$ \\\hline
      $A^{\langle n-1 \rangle} _{2n}$ & $\Z \oplus \Z$ & $\Z/2 \oplus \Z/2$ & $\Z \oplus \Z/2$ & $\Z/2$ & $\Z \oplus \Z$ & $\Z/2$ & $\Z$ & $\Z/2$ \\\hline
      $\mathrm{Basis}$ & $(\frac{\sig}{8}, \frac{\chi^2}{2})$ & $(\Phi,\varphi(\chi))$ & $(\frac{\sig}{8}, \frac{\chi^2}{2})$ & $\Phi$ & $(\frac{\sig}{8}, \frac{\chi^2}{2})$ & $\Phi$ & $\frac{\sig}{8}$ & $\Phi$ \\\hline
    \end{tabular}
    \renewcommand{\arraystretch}{1.0}
  \end{center}
\end{prop}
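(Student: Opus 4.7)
The plan is to reduce, via Wall's bijection between $n$-spaces and diffeomorphism classes of almost closed $(n-1)$-connected $2n$-manifolds, to a question about an equivalence relation on $n$-spaces. Under this bijection the group operation on $A^{\langle n-1\rangle}_{2n}$ is boundary connected sum, and two almost closed manifolds represent the same class precisely when they are cobordant through an $(n-1)$-connected cobordism restricting to an $h$-cobordism on the boundary. So the task splits into three pieces: (i) verify that each invariant listed in the proposed basis is well-defined on $A^{\langle n-1\rangle}_{2n}$ and lands in the group indicated, (ii) exhibit explicit almost closed manifolds realizing the claimed basis vectors, and (iii) show that these invariants are a complete set.

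For (i), the signature $\sig$ and the Arf--Kervaire invariant $\Phi$ are classical cobordism invariants of the closed manifold obtained by capping off the boundary, and Novikov additivity shows they depend only on the almost-closed piece. The integrality $\sig \equiv 0 \pmod 8$ comes from the fact that the intersection form is even unimodular when $n \equiv 0, 4 \pmod 8$ (since the manifold is spin and the cup-square of a middle class agrees with the tangential data via \eqref{eq:wall1}), together with the lattice-theoretic fact that an even unimodular form has signature divisible by $8$; for $n \equiv 2, 6 \pmod 8$ the class $\chi$ provides a characteristic element with $\chi^2 \equiv \sig \pmod 8$. That $\chi^2$ and $\varphi(\chi)$ descend to cobordism invariants follows directly from their formulation as algebraic invariants of the $n$-space, together with the observation that interior $(n-1)$-connected surgeries can be arranged to preserve $\chi$. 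For (ii), I would write down explicit plumbings: the $E_8$-plumbing of rank-$n$ disk bundles over $S^n$ realizes $(\sig/8, \chi^2/2) = (1,0)$; the disk bundle of an oriented rank-$n$ bundle over $S^n$ with Euler number $1$ realizes $(0,1)$; the Milnor--Kervaire Hopf plumbing realizes $\Phi = 1$; and, when $n \equiv 1 \pmod 8$, a single disk bundle associated to the class $\eta \iota \in \pi_n\BSO(n)$ realizes $\varphi(\chi) = 1$.

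The main obstacle is (iii), completeness of the invariants, and this is where I would lean most heavily on Wall's surgery argument from \cite{Wall62, Wall67}. The strategy is to show that, given an almost closed $(n-1)$-connected $2n$-manifold $M$ whose listed invariants vanish, one can use middle-dimensional surgeries on embedded spheres with appropriate normal bundle data to split off hyperbolic summands of the intersection form until only a null-cobordant piece remains. The delicate point is that each such surgery must be performed on an embedded $S^n \hookrightarrow M$ whose normal bundle is compatible with the normal bundle data $\alpha$ of its homology class; this compatibility is what \eqref{eq:wall1} and \eqref{eq:wall2} are designed to track, and arranging it is exactly the content of the exact sequence in \cite[Lemma 32]{Wall67}. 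Combining the three steps then yields the table of groups and bases as stated.
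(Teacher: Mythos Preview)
The paper does not supply its own proof of this proposition; it is stated with attribution to Wall \cite{Wall62,Wall67}, so there is nothing in the paper to compare against beyond Wall's original argument. Your three-step outline (invariants descend, explicit generators exist, invariants are complete) is indeed the shape of Wall's proof, and step (iii) correctly identifies the surgery argument as the substantive part.

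However, your step (ii) contains genuine errors in the constructions. For the generator with $(\sig/8,\chi^2/2)=(0,1)$ when $n\equiv 0\pmod 4$, you propose the disk bundle of a rank-$n$ bundle over $S^n$ with Euler number $1$. No such bundle exists once $n\neq 4,8$: by Adams's Hopf-invariant-one theorem the Euler number of any oriented rank-$n$ bundle on $S^n$ is even, and correspondingly the intersection form of any almost closed $(n-1)$-connected $2n$-manifold is even, so a rank-$1$ form cannot be even and unimodular. (Even were such a bundle to exist, its disk bundle would have $\sig=1$, not $0$.) The correct model, used later in the paper in the proof of \Cref{prop:careful-Schultz}, is a \emph{plumbing of two} disk bundles. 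The same objection applies to your $\varphi(\chi)=1$ generator for $n\equiv 1\pmod 8$: a single disk bundle has a rank-$1$ skew form, which is identically zero and hence not unimodular, so its boundary is not a homotopy sphere; again one must plumb two copies of the $\eta\iota$-bundle. Finally, in (i) you invoke $\chi$ when $n\equiv 6\pmod 8$, but $\pi_n\BSO=0$ there and $\chi$ is not defined; evenness of the form (hence $8\mid\sig$) follows directly from $x^2=HJ\alpha(x)$ and Hopf invariant one.
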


\subsection{The main theorem}
We are now ready to state and prove the main theorem of this paper, assuming \Cref{thm:main} as input.
We first recall some useful quantities computed by Krannich and Reinhold. 

\begin{dfn} \label{dfn:manynumbers}
Let $n>2$ denote a positive integer.  Following \cite{KrannichCharacteristic}, we let:
\begin{itemize}
\item $B_{2n}$ denote the $(2n)^{\mathrm{th}}$ Bernoulli number.
\item $j_{n}$ and $k_n$ denote the denominator and numerator, respectively, of the absolute value of $\frac{B_{2n}}{4n}$ when written in lowest terms.
\item $a_n$ denote $1$ if $n$ is even and $2$ if $n$ is odd.
\item $\sigma_n$ denote the integer $\sigma_n=a_n 2^{2n+1}(2^{2n-1}-1) k_n $.
\item $c_n$ and $d_n$ denote integers such that $c_n k_n + d_n j_n = 1 $.
\end{itemize}
Finally, we let $s(Q)_{2n}$ denote the following integer:
\[ s(Q)_{2n} = \frac{-1}{8 j^2_{n}} 
\left( \sigma_n^2 + a_n^2 \sigma_{2n} k_n \right)
\left( c_{2n} k_n + 2(-1)^{n} d_{2n} j_n \right).
\]
\end{dfn}

\begin{rmk}
  The integers and $c_n$ and $d_n$, and therefore $s(Q)_{2n}$, are not well-defined. Nevertheless, $s(Q)_{2n}$ is well-defined modulo $\frac{\sigma_{2n}}{8}$.\todo{Have not actually checked this, but things would be fucked if it were false.}
  Since we will only use the value of $s(Q)_{2n}$ modulo $\frac{\sigma_{2n}}{8}$ in this section, this will not present a problem for us.

\end{rmk}

\begin{thm}\label{thm:geography}
  Suppose that $ n \geq 3 $.
  With the exception of finitely many $n$, an $n$-space $(H, H \otimes H \to \Z, \alpha)$ is realized by a smooth, closed, oriented, $(n-1)$-connected $2n$-manifold if and only if the following conditions hold:
  \begin{enumerate}
    \item If $n \equiv 0 \mod 4$, then
      $ \sig + 4 s(Q)_{n/2} \chi^2 \equiv 0 \mod \sigma_{n/2} $.
    \item If $n \equiv 2 \mod 4$, then
      $\sig \equiv 0 \mod \sigma_{n/2} $.
    \item If $n \equiv 1 \mod 2$, then $\Phi = 0$.
  \end{enumerate}
  The full list of exceptions is as follows:
  \begin{itemize}
  \item If $n = 3,7,15,31$, then every $n$-space is realizable.
  \item If $n = 63$, then every $n$-space is realizable if there exists a closed smooth manifold of Kervaire invariant one in dimension $126$. Otherwise, an $n$-space is realizable if and only if $\Phi = 0$.
  \item If $n = 4$ or $8$, then instead of condition (1) we require $\sig - \chi^2 \equiv 0 \mod \sigma_{n/2}$.
  \item If $n = 9$, we require condition (3) and demand further that $\varphi(\chi) = 0$.
  \item If $n = 12$, we require condition (1) and demand further that $\chi^2 \equiv 0 \mod 4$.
  \end{itemize}
\end{thm}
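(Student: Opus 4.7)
The plan is to combine Wall's exact sequence from the preceding discussion with \Cref{prop:A-comp} and \Cref{thm:main}: realizability of an $n$-space $(H, H\otimes H\to\Z,\alpha)$ amounts to vanishing of its class in $A^{\langle n-1\rangle}_{2n}$ under the boundary map $\partial: A^{\langle n-1\rangle}_{2n}\to\Theta_{2n-1}$. Since \Cref{prop:A-comp} identifies $A^{\langle n-1\rangle}_{2n}$ as a free module on the invariants $\sig/8,\,\chi^2/2,\,\Phi,\,\varphi(\chi)$, the task reduces to computing $\partial$ on each basis vector and reading off its kernel.

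For $n$ outside the exceptional dimensions, \Cref{thm:main} identifies the image of $\partial$ with $\bp_{2n}\subseteq\Theta_{2n-1}$, so I would determine $\partial$ on each generator in turn. The $\sig/8$ generator (realized by an $E_8$-plumbing) maps to a generator of $\bp_{4m}$ by the classical Kervaire--Milnor formula, yielding the modulus $\sigma_{n/2}$. The $\chi^2/2$ generator for $n\equiv 0\pmod 4$ maps to the element of $\bp_{2n}$ computed by Krannich and Reinhold, which delivers precisely the coefficient $4s(Q)_{n/2}$ appearing in \Cref{dfn:manynumbers}. For odd $n$, the $\Phi$ generator maps to the Kervaire sphere, which is nontrivial in $\bp_{2n}$ exactly when no closed manifold of Kervaire invariant one exists in dimension $2n$. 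Combined with Browder's theorem and Hill--Hopkins--Ravenel, this accounts for the exceptions at $n\in\{3,7,15,31\}$ and, conditionally, at $n=63$.

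It remains to handle the low-dimensional exceptions $n\in\{4,8\}$ and the two genuinely new exceptions $n=9,12$. For $n\in\{4,8\}$, the unstable group $\pi_n\mathrm{BSO}(n)$ has extra rank coming from Hopf invariant one, which allows the $\chi$-generator to be realized by the tangent disk bundle of $S^n$ itself; a direct signature computation replaces the Krannich--Reinhold correction with $-\chi^2$, giving $\sig-\chi^2\equiv 0\pmod{\sigma_{n/2}}$. For $n=9$, \Cref{thm:main} shows that the image of $\partial:(\Z/2)^2\to\Theta_{17}$ has order $4$, so $\partial$ must be injective, forcing both $\Phi=0$ and $\varphi(\chi)=0$.

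The hardest case is $n=12$. By \Cref{thm:main} the image of $\partial$ consists of those $\Sigma\in\Theta_{23}$ whose class in $\coker(J)_{23}$ lies in $\{0,\eta^3\kappabar\}$, so this image splits as $\bp_{24}\oplus\langle\eta^3\kappabar\rangle$. The exceptional manifold $M_{24}$ produced in \Cref{sec:witten} and identified in \Cref{sec:inch} is an $11$-connected almost closed manifold with trivial signature, $\chi^2\equiv 2\pmod 4$, and $\partial M_{24}=\eta^3\kappabar$; this shows that $\chi^2\pmod 4$ detects the $\coker(J)_{23}$ component of $\partial$, while the usual linear combination $\sig+4s(Q)_6\chi^2$ (modulo $\sigma_6$) detects the $\bp_{24}$ component. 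The conjoined conditions in the theorem are then exactly the description of $\ker(\partial)$. The main obstacle throughout is the careful bookkeeping of the Krannich--Reinhold formula in the $n\equiv 0\pmod 4$ case and, for $n=12$, verifying that the image of $\partial$ indeed splits as claimed so that the two conditions on $\chi^2$ are independent.
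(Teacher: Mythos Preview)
Your overall strategy matches the paper's: use \Cref{prop:A-comp} to identify $A^{\langle n-1\rangle}_{2n}$, then compute $\partial$ on each basis vector and read off the kernel. The treatment of $P$, $Q$ (via Krannich--Reinhold), $K$, and the low cases $n\in\{4,8,9,12\}$ is essentially what the paper does.

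However, there are two genuine gaps. First, for $n\equiv 2\pmod 8$ you never address the second basis element $L$ with $(\sig/8,\chi^2/2)=(0,1)$. \Cref{thm:main} only tells you $\partial(L)\in\bp_{2n}$; since $L$ is $2$-torsion and $|\bp_{2n}|$ is even, $\partial(L)$ could be the unique order-$2$ element of $\bp_{2n}$. If it were, condition~(2) would acquire a $\chi^2$ term. The paper invokes Schultz \cite[Corollary 3.2 and Theorem 3.4(iii)]{Schultz} to get $\partial(L)=0$.

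Second, and more seriously, for $n\equiv 1\pmod 8$ with $n>9$ you do not treat the basis element $R$ with $(\Phi,\varphi(\chi))=(0,1)$. \Cref{thm:main} gives only $[\partial(R)]=0$ in $\coker(J)_{2n-1}$, leaving open whether $\partial(R)$ is $0$ or the Kervaire sphere $\partial(K)\in\bp_{2n}\cong\Z/2$. In the latter case the realizability condition would be $\Phi+\varphi(\chi)=0$ rather than $\Phi=0$. The paper handles this with an argument due to Schultz: using a plumbing description of $R$ one shows $\partial(R)=\partial(Q)\cdot\eta^2$ under Bredon's pairing, and since $\partial(Q)\in\bp_{2n-2}$ (as $n-1\equiv 0\pmod 8$ and $n-1\neq 12$) and Bredon's pairing vanishes on $\bp$ by Roitberg, one concludes $\partial(R)=0$. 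This step is singled out in the paper as requiring special care and is not a consequence of \Cref{thm:main} alone.
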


\begin{rmk}
  The cases $3\leq n \leq 8$ were proven in \cite{Wall62}, so we will assume $n \geq 9$ in the following.
\end{rmk}


Almost all of the ingredients in this theorem are already in the literature, and much of what we do here consists merely of their collation. What is original to this paper are the new cases of \Cref{thm:main}, as well as the special care given to the $n \equiv 1 \mod 8$ case, which we have not seen spelled out elsewhere.

The proof of \Cref{thm:geography} will follow immediately from \Cref{prop:A-comp} and \Cref{lem:partial} below, which computes the boundaries of several specific classes. Recall the Kervaire--Milnor exact sequence
\[ 0 \to \bp_{2n} \to \Theta_{2n-1} \to \coker(J)_{2n-1}. \]

Given an element $\Sigma \in \Theta_{2n-1}$, we will let $[\Sigma] \in \coker(J)_{2n-1}$ denote its image under the map in the above exact sequence.

When $n$ is even this sequence is short exact, and Brumfiel constructed a preferred splitting \cite{Brumfiel}:
\[ \Theta_{2n-1} \cong \bp_{2n} \oplus \coker(J)_{2n-1}. \]

\begin{lem} \label{lem:partial}\ 
\begin{enumerate}
  \item For $n$ even, let $P \in A^{\langle n-1 \rangle} _{2n}$ denote the element with $\frac{\sig}{8} = 1$ and $\frac{\chi^2}{2} = 0$ if $n \equiv 0,2,4 \mod 8$. As noted in \cite[Section 3.2.2]{KrannichMappingClass}, we may choose $P$ to be Milnor's $E_8$-plumbing. The homotopy sphere $\partial(P) \in \Theta_{2n-1}$ is a generator of $\bp_{2n}$, which is a cyclic group of order $\frac{\sigma_{n/2}}{8}$.
  \item For $n \equiv 0 \mod 4$, let $Q \in A^{\langle n-1 \rangle} _{2n}$ denote the element with $( \frac{\sig}{8}, \frac{\chi^2}{2}) = (0,1)$. For $n \neq 12$, $\partial(Q)$ is $s(Q)_{n/2} \cdot \partial(P)$. For $n = 12$, $\partial(Q) $ is $s(Q)_6 \cdot \partial(P) + \eta^3 \bar{\kappa}$, where $\eta^3 \kappabar \in \coker(J)_{23}$ is viewed as an element of $\Theta_{23}$ via Brumfiel's splitting. The class $\eta^3 \kappabar$ is simple $2$-torsion.
  \item For $n \equiv 2 \mod 4$, let $L \in A^{\langle n-1 \rangle} _{2n}$ denote the element with $( \frac{\sig}{8}, \frac{\chi^2}{2}) = (0,1)$. Then $\partial (L) = 0$.
  \item For $n$ odd, let $K \in A^{\langle n-1 \rangle} _{2n}$ denote the element with $\Phi(K) = 1$ (and $\varphi(\chi) = 0$ if $n \equiv 1 \mod 8$). We may choose $K$ to be the Kervaire plumbing. The homotopy sphere $\partial (K) \in \Theta_{2n-1}$ is a generator of $\bp_{2n}$. This group is zero for $n =1,3,7,15,31$ and possibly $63$, depending on the outcome of the $126$-dimensional Kervaire invariant one problem. It is $\Z/2$ otherwise.
  \item For $n \equiv 1 \mod 8$, let $R \in A^{\langle n-1 \rangle} _{2n}$ denote the element with $(\Phi, \varphi(\chi)) = (0,1)$. For $n \neq 9$, we have $\partial (R) = 0$. For $n = 9$, we have $[\partial (R)] = \eta \eta_4 \in \coker(J)_{17}$, which is simple $2$-torsion
\end{enumerate}
\end{lem}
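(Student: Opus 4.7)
The approach is case-by-case: in each situation I would exhibit an explicit almost closed $(n-1)$-connected $2n$-manifold realizing the claimed invariants, then use \Cref{thm:main}, the classical Kervaire--Milnor identification of $\bp_{2n}$, and the new constructions of \Cref{sec:witten} and \Cref{sec:2-9} to compute its boundary in $\Theta_{2n-1}$.

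Parts (1) and (4) are essentially classical. For (1), Milnor's $E_8$-plumbing realizes $P$: the intersection form is $E_8$, giving $\sig/8 = 1$, and the normal data is stably trivial so $\chi = 0$. Kervaire--Milnor then identifies $\bp_{2n}$ as cyclic of order $\sigma_{n/2}/8$ generated by $\partial(P)$. For (4), the Kervaire plumbing realizes $K$ with $\Phi(K) = 1$, and $\partial(K)$ generates $\bp_{2n}$; by Kervaire--Milnor, Browder, and Hill--Hopkins--Ravenel, this group is trivial for $n \in \{1,3,7,15,31\}$ (with $n = 63$ equivalent to the open Kervaire invariant problem in dimension $126$) and is $\Z/2$ otherwise. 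For (3), I would cite Schultz's theorem \cite[Corollary 3.2 and Theorem 3.4(i)]{Schultz}, whose proof analyzes the relevant unstable $J$-homomorphism to show $\partial(L)=0$ for $n \equiv 2 \pmod 4$.

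For (2), I would first establish $\partial(Q) \equiv s(Q)_{n/2}\partial(P)$ modulo $\coker(J)_{2n-1}$ using the signature defect computation of Krannich--Reinhold \cite{KrannichCharacteristic}; here $Q$ is realized as a disk bundle over $S^n$ associated to a generator of the stable summand of $\pi_n \mathrm{BSO}(n)$. For $n \neq 12$, \Cref{thm:main} forces $\partial(Q) \in \bp_{2n}$, upgrading this congruence to the stated equality. The $n=12$ case is the main new content of the paper: in \Cref{sec:witten}, the integrality of the Witten genus under the Ando--Hopkins--Rezk string orientation produces an exceptional $11$-connected $24$-manifold $M_{24}$ with $\chi^2/2 = 1$ whose boundary is not in $\bp_{24}$, and \Cref{sec:inch}'s inchworm refinement of \cite{Boundaries} identifies $[\partial M_{24}] = \eta^3 \kappabar \in (\coker(J)_{23})_{(2)}$. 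Subtracting the appropriate multiple of $P$ pins down the formula for $\partial(Q)$, and the fact that $\eta^3 \kappabar$ is simple $2$-torsion is a direct stable homotopy computation.

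For (5), the generator $R$ is realized by a disk bundle over $S^n$ associated to $\eta \iota \in \pi_n \mathrm{BSO}(n)$. For $n \neq 9$, \Cref{thm:main} implies $\partial(R) \in \bp_{2n}$; since $\Phi(R) = 0$ and the Arf--Kervaire invariant detects the generator of $\bp_{2n}$ for $n$ odd, one deduces $\partial(R) = 0$. For $n = 9$, the exceptional realization produced in \Cref{sec:2-9} via a power operation argument gives an $18$-manifold realizing $R$ whose boundary satisfies $[\partial(R)] = \eta \eta_4 \in \coker(J)_{17}$, a class which is simple $2$-torsion. I expect the main obstacle throughout to be pinning down the exceptional $\coker(J)$-contributions in dimensions $17$ and $23$; for these, the paper's new tools---the Witten genus obstruction, the power operation argument, and the inchworm refinement of \cite{Boundaries}---are essential, while the rest of the argument is a careful collation of results of Wall, Kervaire--Milnor, Schultz, and Krannich--Reinhold.
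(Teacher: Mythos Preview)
Your approach to parts (1)--(4) matches the paper's: citing Kervaire--Milnor and Levine for (1) and (4), Schultz for (3), and combining the Krannich--Reinhold signature-defect computation with \Cref{thm:main} and Brumfiel's splitting for (2). A couple of minor inaccuracies in the explicit realizations (both $Q$ and $R$ are plumbings of two disk bundles in the paper, not single disk bundles) do not affect the strategy.

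The argument for part (5) when $n > 9$, however, has a genuine gap. You claim that from $\partial(R) \in \bp_{2n}$ and $\Phi(R) = 0$ one deduces $\partial(R) = 0$ because ``the Arf--Kervaire invariant detects the generator of $\bp_{2n}$.'' But $\Phi$ is an invariant on $A^{\langle n-1 \rangle}_{2n}$, not on $\Theta_{2n-1}$; there is no map out of $\bp_{2n}$ through which it factors. Ruling out $\partial(R) = \partial(K)$ is equivalent to showing that no \emph{closed} $(n-1)$-connected $2n$-manifold has $\Phi = 1$, and this does \emph{not} follow from the framed Kervaire-invariant-one theorems: the putative closed representative of $R + K$ has $\varphi(\chi) = 1$ and hence is not stably parallelizable. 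The paper addresses exactly this point and instead uses an argument of Schultz. Realizing $R$ (for $n \equiv 1 \pmod 8$) as a plumbing related via $\eta$ to the element $Q$ in dimension $n-1$, one obtains $\partial(R) = \partial(Q) \cdot \eta^2$ under Bredon's composition pairing; then Roitberg's theorem that this pairing vanishes on $\bp$, combined with part (2) for $n-1 \equiv 0 \pmod 8$ (noting $n-1 \neq 12$), yields $\partial(R) = 0$.
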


\begin{proof}[Proof of \Cref{lem:partial}(1)-(4)]\ 
  
  {\bf The element P.} The boundary of Milnor's $E_8$-plumbing is well-known to be a generator of $\bp_{2n}$ when $n$ is even (use e.g. \cite[Lemma 3.5(2)]{Levine} and the fact the signature of the Milnor plumbing is equal to $8$). The fact that for even $n$ the group $\bp_{2n}$ is a cyclic of order $\frac{\sigma_{n/2}}{8}$ follows from \cite[Corollary 3.20]{Levine}.

  {\bf The element Q.} 
  We will describe $\partial(Q)$ in terms of Brumfiel's splitting.
  On the one hand, Krannich--Reinhold \cite[Lemma 2.7]{KrannichCharacteristic}, building on work of Stolz \cite{StolzbP}, have computed the $\bp_{2n}$-component of $\partial(Q)$ to be $s(Q)_{n/2} \cdot \partial(P)$, for the explicit quantity $s(Q)_{n/2}$ defined in \Cref{dfn:manynumbers}. On the other hand, the computation of the $\coker(J)_{2n-1}$-component of $\partial(Q)$ follows immediately from \Cref{thm:main} and the fact that $\partial (P)$ lies in $\bp_{2n}$. In particular, the $\coker(J)_{2n-1}$-component of $\partial(Q)$ is zero if $n \neq 12$ and is $\eta^3 \kappabar$ for $n=12$.

{\bf The element L.} It is a result of Schultz that $\partial(L) = 0 \in \Theta_{2n-2}$ \cite[Corollary 3.2 and Theorem 3.4(iii)]{Schultz}.

{\bf The element K.} The boundary of the Kervaire plumbing, $\partial (K)$, is the Kervaire sphere, which generates $\bp_{2n}$ by \cite{KervaireMilnor}. This case of \Cref{thm:geography} now follows from the fact that, if $n$ is odd, $\bp_{2n} \cong 0$ if a smooth closed $2n$-manifold of Kervaire invariant one exists, and is isomorphic to $\Z / 2 \Z$ otherwise. By \cite{BPKervaire,MahTanDiff,BrowderKervaire,BJMKervaire,HHR}, such a manifold exists if $n = 1, 3, 7, 15, 31$ and possibly if $n = 63$.
\end{proof}

Computing $\partial(R)$ takes more work.
It follows from \Cref{thm:main} that for $n \neq 9$, $[\partial(R)] = 0 \in \coker(J)_{2n-1}$, but to show that the condition for realizability of an $n$-space is $\Phi = 0$ and not $\Phi + \varphi(\chi) = 0$, it is necessary to prove that $\partial(R) = 0 \in \Theta_{2n-1}$. To do this, we recall an argument of Schultz \cite{Schultz} which reduces it to the case of $n \equiv 0 \mod 8$. 

Recall Bredon's pairing 
\[- \cdot - : \Theta_n \times \pi_{n+k} (S^n) \to \Theta_{n+k}, \]
which is bilinear and defined for $n \geq 5$ \cite{Bredon}. Roitberg has shown that this pairing is that induced by the composition action on $\Theta_n \cong \pi_n PL/O$, and used this to show that the restriction of this pairing to $\bp_{n+1}$ is zero for $k \geq 1$  \cite[Theorem B]{Roitberg}.

\begin{lem} \label{prop:careful-Schultz}
  Suppose that $n > 9$ is congruent to $1$ modulo $8$.
  Then
  $ \partial(R) = \partial(Q) \cdot \eta^2,$ where we have used $\cdot$ for Bredon's pairing.
\end{lem}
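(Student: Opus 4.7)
My plan is to realise $R$ as a framed-product modification of $Q$ by a representative of $\eta^2 \in \pi^s_2$, and then to identify the resulting boundary with $\partial Q \cdot \eta^2$.

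Explicitly, let $N$ be a closed framed $2$-manifold whose framed cobordism class is $\eta^2$; one may take $N = T^2$ with its Lie framing. I would form the product $Q \times N$, which is a $2n$-manifold with boundary $\partial Q \times N$, and perform framed surgery on this boundary killing the $1$-cycles from the $N$-factor. The trace of this surgery is a cobordism from $\partial Q \times N$ to a homotopy sphere $\Sigma \in \Theta_{2n-1}$, and using the framed-cobordism description of Bredon's pairing (which applies here because $\partial Q$ lies in $\bp_{2n-2}$, where Roitberg's theorem pins down the pairing as composition in $\pi_*\mathrm{PL}/\mathrm{O}$) one identifies $\Sigma$ with $\partial Q \cdot \eta^2$. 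Gluing this trace onto $Q \times N$ along $\partial Q \times N$ gives an almost closed $2n$-manifold with boundary $\partial Q \cdot \eta^2$; subsequent interior surgeries below the middle dimension, which do not affect the boundary, produce an $(n-1)$-connected manifold $M$ with $\partial M = \partial Q \cdot \eta^2$. The hypothesis $n > 9$ enters here, ensuring enough stable range that the interior surgeries can be carried out without creating spurious middle-dimensional cycles beyond those already present.

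It then remains to verify that $M$ and $R$ coincide in $A^{\langle n-1 \rangle}_{2n} \cong \Z/2 \oplus \Z/2$ by checking $(\Phi(M), \varphi(\chi)(M)) = (0, 1)$. The new middle-dimensional classes of $M$ arise as products of middle-dimensional classes of $Q$ with the $1$-cycles of $N$, and pair via a split hyperbolic quadratic refinement of the intersection form, forcing $\Phi(M) = 0$. The normal bundle of such a product class inherits the $\eta$-twist from the Lie framing of $N$, producing the non-trivial element $\eta \iota \in \pi_n \mathrm{BSO}(n)$ detected by $\varphi$, so $\varphi(\chi)(M) = 1$. By \Cref{prop:A-comp}, $M$ and $R$ then agree in $A^{\langle n-1 \rangle}_{2n}$, which yields $\partial R = \partial M = \partial Q \cdot \eta^2$.

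The main obstacle is the invariant computation in the last step: carefully tracking how the framings of $Q$ and $N$ combine through the surgery to determine the normal bundle data of $M$, and then matching this against Wall's definitions of $\Phi$ and $\varphi$ in terms of the quadratic refinement and the $\eta\iota$-component of $\pi_n \mathrm{BSO}(n)$. A cleaner route may be to perform the entire construction inside the framed cobordism category and invoke Roitberg's interpretation of Bredon's pairing via composition in $\pi_*(\mathrm{PL}/\mathrm{O})$, which would supply the boundary identification essentially for free and reduce the verification of invariants to an essentially formal naturality statement for Wall's construction of $R$.
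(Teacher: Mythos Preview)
The paper's proof is quite different and much shorter: it realises both $Q$ and $R$ as explicit plumbings. One checks that $Q$ may be taken as the plumbing of two copies of the linear $(n-1)$-disk bundle over $S^{n-1}$ classified by a suitable $\alpha\in\pi_{n-1}\mathrm{BSO}(n-1)$, and $R$ as the analogous plumbing over $S^n$ with bundle $i_*\eta\alpha$. With both manifolds in this form, the relation $\partial(R)=\partial(Q)\cdot\eta^2$ is a direct citation of \cite[Theorem 2.5]{Schultz}, which compares plumbing boundaries under $\eta$-multiplication of the bundle data. No separate invariant computation for $R$ is needed, since one has \emph{constructed} $R$ rather than merely characterised it.

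Your framed-product approach has a genuine gap at the boundary identification. Roitberg's theorem says Bredon's pairing is the composition action on $\pi_*(\mathrm{PL}/\mathrm{O})$; it does \emph{not} say the pairing is computed by forming a framed product with a representative of $\beta$ and surgering to a sphere. The latter operation, insofar as it is well-defined, computes a product in framed cobordism and hence only determines the $\coker(J)$ class, not the element of $\Theta_{2n-1}$. There is also a slip in your invariant sketch: you write that $\eta\iota$ is ``detected by $\varphi$'', but by definition $\varphi$ has kernel generated by $\eta\iota$, so $\varphi(\eta\iota)=0$; the correct source of $\varphi(\chi)=1$ would be the $\tau_{S^n}$-correction in Wall's formula $\alpha(x+y)=\alpha(x)+\alpha(y)+(xy)\tau_{S^n}$, not $\varphi$ applied to the normal bundle of a single generator. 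The plumbing route in the paper sidesteps both issues.
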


\begin{proof}
  Let $\alpha \in \pi_{n-1} \BSO(n-1)$ denote a generator of the image of $\pi_{n-1} \BSO(n-2)$ in $\pi_{n-1} \BSO(n-1)$. It maps under the stabilization map $i_* : \pi_{n-1} \BSO(n-1) \to \pi_{n-1} \BSO(n) \cong \Z$ to a generator.

  As in \cite[Section 3.2.2]{KrannichMappingClass}, $Q$ may be chosen to be the plumbing of two copies of the $(n-1)$-dimensional linear disk bundle over $S^{n-1}$ corresponding to $\alpha$. Similarly, it is not hard to see that $R$ may be constructed by plumbing together two copies of the $n$-dimensional linear disk bundle over $S^{n}$ corresponding to $i_* \eta \alpha$.
  It then follows from \cite[Theorem 2.5]{Schultz} that
  \[\partial(R) = \partial(Q) \cdot \eta^2,\]
  as desired.
\end{proof}

\begin{rmk}
  This is essentially \cite[Theorem 3.1]{Schultz}, except that Schultz does not specify the definition of $\Phi$ that he is using.
\end{rmk}

\begin{proof}[Proof of \Cref{lem:partial}(5)]
  Suppose that $n > 9$.
  We have already seen above that $\partial(Q) \in \bp_{2n-2}$. (Note that $n-1 \equiv 0 \mod 8$ and in particular cannot be equal to $12$.)
  Then \Cref{prop:careful-Schultz} asserts that $\partial(R) = \partial(Q) \cdot \eta^2$, which is equal to $0$ because Bredon's pairing restricts to zero on $\bp$. 

  On the other hand, at $n=9$, \Cref{thm:main}, \Cref{prop:A-comp} and the computation of $\partial(K)$ imply that we must have $[\partial R] = \eta \eta_{4} \in \coker(J)_{17}$.
\end{proof}

\section{The Witten genus of $11$-connected $24$-manifolds} \label{sec:witten}
In this section, we will show that the $23$-dimensional case of \Cref{thm:main} is exceptional, disproving the conjecture of Galatius and Randal-Williams.
In particular we prove the following theorem:

\begin{thm} \label{thm:counterex}
%
  The image of the composition
  \[A^{\langle 11 \rangle} _{24} \xrightarrow{\partial} \Theta_{23} \to \coker(J)_{23} \to \left(\coker(J)_{23}\right)_{(2)}\]
  is not $0$. In particular, there is an $11$-connected $24$-manifold whose boundary is a homotopy sphere which does not bound a parallelizable manifold.
\end{thm}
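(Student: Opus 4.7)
The plan is to argue by contradiction, exploiting the lift of the Witten genus along the Ando--Hopkins--Rezk string orientation $\MStr \to \tmf$. Assume the image of the displayed composition vanishes. Then for the class $Q \in A^{\langle 11 \rangle}_{24}$ with $(\sig/8, \chi^2/2) = (0,1)$, Brumfiel's splitting $\Theta_{23} \cong \bp_{24} \oplus \coker(J)_{23}$ forces $\partial(Q)$ to lie in $\bp_{24} \oplus (\coker(J)_{23})_{\text{odd}}$, a finite group; choose $N \geq 1$ with $N \partial(Q) = 0$. Then $NQ$ is realized by a closed, oriented, smooth, $11$-connected $24$-manifold $M$.

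Since $M$ is $11$-connected, $H^k(M;\Z) = 0$ for $0 < k < 12$; in particular $w_2(M)$ and $\tfrac{1}{2}p_1(M)$ both vanish, so $M$ admits a canonical string structure. By Ando--Hopkins--Rezk, its Witten genus factors through $[M] \in \Omega^{\Str}_{24} \to \pi_{24}\tmf \to \MF_{12}(\Z)$. Because $p_1(M) = p_2(M) = 0$ as cohomology classes, the only potentially nonzero Pontryagin numbers of $M$ are $p_3^2[M]$ and $p_6[M]$. The normal bundle data of the $n$-space pins down $p_3(M)$ as a known integer multiple of the Poincar\'e dual of $\chi$, so $p_3^2[M]$ is a known integer multiple of $\chi^2 = 2N$; combining this with Hirzebruch's formula $0 = \sig(M) = L_6[M]$ then determines $p_6[M]$ as an explicit multiple of $N$ as well. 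Substituting into the Witten class formula yields an identity $\phi_W(M) = N(a\, E_4^3 + b\, \Delta) \in \MF_{12}(\Z)$ for explicit rational constants $a, b$ independent of $N$.

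The main obstacle is the arithmetic verification. A fundamental feature of $\tmf$ is that the image of $\pi_{24}\tmf \to \MF_{12}(\Z)$ is a proper subgroup; specifically, as noted in the introduction, the $\Delta$-coefficient of any element in this image is divisible by $24$. Under our contradictory hypothesis, $N$ divides the order of $\bp_{24} \oplus (\coker(J)_{23})_{\text{odd}}$, whose $2$-primary part is exactly $\bp_{24,(2)}$, so $v_2(N) \leq v_2(\sigma_{12}/8)$. Computing $b$ explicitly from the Witten class formula for $p_1 = p_2 = 0$ string manifolds and checking that $v_2(bN) < v_2(24) = 3$ regardless of the allowable choice of $N$ yields the desired contradiction. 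The heart of the argument, and the anticipated main obstacle, is thus this explicit $2$-adic computation: we must show that the inherent $2$-adic denominator of $b$ is just slightly too large to be cleared by the largest admissible $N$, so that the $\Delta$-coefficient $bN$ of $\phi_W(M)$ fails the Ando--Hopkins--Rezk divisibility constraint.
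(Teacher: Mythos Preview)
Your overall strategy---assume the image vanishes, build a closed $11$-connected $24$-manifold, and contradict the Ando--Hopkins--Rezk divisibility constraint on its Witten genus---is exactly the paper's. The gap is in the choice of manifold. By closing up $NQ$ directly, your $N$ must also kill the $\bp_{24}$-component of $\partial Q$. That component is $s(Q)_6 \cdot \partial P$ with $v_2(s(Q)_6) = 6$ and $v_2(\lvert\bp_{24}\rvert) = v_2(\sigma_6/8) = 10$, so its $2$-order is $2^4$ and hence $v_2(N) \geq 4$ necessarily---your stated bound $v_2(N) \leq 10$ goes in the wrong direction for what you need. Carrying out the Pontryagin-number arithmetic (using $\sig(NQ) = 0$ to solve for $p_6$ in terms of $p_3^2 = 115200\,N$, and then the formula $\Ah(M, T_{\CC}) = (-1177\, p_3^2 - 311\, p_6)/9906624000$) one finds $v_2(\Ah(Q, T_{\CC})) = -1$, so $v_2(\Ah(M, T_{\CC})) = v_2(N) - 1 \geq 3$ and the divisibility by $24$ is satisfied on the nose. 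No contradiction arises. Indeed, this was forced: a closed lift of $NQ$ with $v_2(N) = 4$ exists unconditionally (the $2$-order of $\partial Q$ equals $2^4$ whether or not your hypothesis holds), so no invariant of that manifold can yield a contradiction.

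The paper's fix is to work instead with $\ord([\Sigma_Q]) \cdot (Q - s(Q)_6 P)$, where the explicit integer $s(Q)_6$ (computed following Krannich--Reinhold) is chosen precisely so that the $\bp_{24}$-component of the boundary vanishes. Under the contradiction hypothesis, the remaining boundary lies in the odd part of $\coker(J)_{23}$, so the multiplier $N' = \ord([\Sigma_Q])$ may be taken odd. Krannich--Reinhold's Pontryagin-number formulas for this class then give $\Ah(M', T_{\CC}) = 283740\,N'$, which has $v_2 = 2 < 3$: contradiction. The subtraction of $s(Q)_6 P$ is therefore the essential step that makes the $2$-adic arithmetic work, not merely a convenience.
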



\begin{rmk}
  In Sections \ref{sec:inch} and \ref{sec:3-3}, we refine this result, showing that the image of the composition
  \[A^{\langle 11 \rangle} _{24} \xrightarrow{\partial} \Theta_{23} \to \coker(J)_{23}\]
  is $\{0, \eta^3 \kappabar\} \subset \coker(J)_{23}$.
\end{rmk}

The proof of \Cref{thm:counterex} is a relatively straightforward application of the Ando--Hopkins--Rezk refinement \cite{AHR} of the Witten genus and will be accomplished in two steps.
First, we will collect results of Hirzebruch, Hopkins--Mahowald and Ando--Hopkins--Rezk that provide a divisibility condition on the Pontryagin numbers of a closed $11$-connected $24$-manifold. Second, we will work through the implications of this restriction for the relevant examples.



  
\subsection{A condition on Pontryagin numbers.}\label{sec:cond}
In order to motivate the condition on the Pontryagin numbers of an $11$-connected $24$-manifold which we need, we will begin by working through an analogous restriction for $4$-dimensional spin manifolds, which is equivalent to Rokhlin's theorem.

\begin{exm}
  The Hirzebruch signature formula tells us that the signature of the intersection form on $\mathrm{H}_2$ of an oriented $4$-manifold is given by $\frac{1}{3}p_1$, where $p_1$ is the first Pontryagin number. In the case where the $4$-manifold is spin (i.e. $w_2 = 0$), the intersection form on $\mathrm{H}_2$ is unimodular and even. Any even unimodular quadratic form has signature divisible by 8, so we may conclude that $p_1$ is divisible by 24.

  Further divisibility conditions can be obtained by a more sophisticated analysis.
  Given a spin manifold, we can consider its $\Ah$-genus, which is an integer (being the index of the Dirac operator on the spinor bundle). In this case, the $\Ah$-genus is equal to $-\frac{1}{24}p_1$, from which we immediately recover our earlier divisibility criterion. But we may go further. Indeed, since the real Clifford algebra $C\ell_4 \cong M_2 (\mathbb{H})$ contains the quaternions, the spinor representation inherits a quaternionic structure. The Dirac operator is then quaternion-linear, so its index must be even. The same conclusion can be obtained using the Atiyah--Bott--Shapiro orientation
  \[ \MSpin \to \mathrm{ko},\]
  which refines the $\Ah$-genus \cite{ABS}.
  Indeed, the composite
  \[ \pi_4 \mathrm{MSpin} \to \pi_4 \mathrm{ko} \to \pi_{4} \mathrm{ku} \cong \Z, \]
  is equal to the $\Ah$-genus, where the first map is induced by the Atiyah--Bott--Shapiro orientation and the second map is induced by tensoring up a real vector bundle to the complex numbers.
  Since the second map sends a generator to twice a generator of $\Z$, we learn that the $\Ah$-genus of a spin $4$-manifold is divisible by 2, i.e. that $p_1$ is divisible by $48$.
\end{exm}

In the case of $11$-connected $24$-manifolds, we will use the Ando--Hopkins--Rezk refinement of the Witten genus to prove the following:

\begin{prop} \label{prop:divisibility}
  If $M$ is an $11$-connected $24$-manifold, then $-1177p_3^2 - 311p_6$ is divisible by $237758976000 = 2^{12} \cdot 3^6 \cdot 5^3 \cdot 7^2 \cdot 13 $.
\end{prop}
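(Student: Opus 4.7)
My plan is to reduce the divisibility statement to an integrality constraint on the Ando--Hopkins--Rezk refinement of the Witten genus. Since $M$ is $11$-connected, the classifying map of its stable tangent bundle $M \to \BO$ automatically factors (uniquely, up to homotopy) through $\BStr = \BO\langle 8 \rangle$, so $M$ is canonically a string manifold. The same connectivity assumption forces $H^{4i}(M;\ZZ) = 0$ for $i = 1, 2$, hence $p_1(M) = p_2(M) = 0$, and any product of Pontryagin classes of total degree $24$ involving $p_4$ or $p_5$ must also involve $p_1$ or $p_2$. Consequently the only Pontryagin numbers of $M$ are $p_3^2$ and $p_6$.

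Apply the $\sigma$-orientation $\MStr \to \tmf$ of Ando--Hopkins--Rezk to the string bordism class of $M$ to obtain a class $[M]_{\tmf} \in \pi_{24}\tmf$. Rationally, $\pi_{24}\tmf \otimes \QQ$ is the space of weight-$12$ modular forms $MF_{12}(\QQ)$, and this composite is the classical Witten genus $\phi_W$. The Hirzebruch-type formula $\phi_W = \prod x_i/\sigma_L(x_i)$ expresses $\phi_W(M)$ as a linear combination of $p_3^2$ and $p_6$ with coefficients that are explicit weight-$12$ modular forms (computed by expanding the Weierstrass $\sigma$-function in Eisenstein series). Decomposing in the basis $\{E_{12}, \Delta\}$ of $MF_{12}(\QQ)$, I would extract the coefficient of $\Delta$ and identify it, by explicit calculation, with a rational multiple of the polynomial $-1177 p_3^2 - 311 p_6$ appearing in the statement.

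The remaining ingredient is integrality. The image of $\pi_{24}\tmf \to MF_{12}(\ZZ)$ is known explicitly, and in particular the classical Hopkins--Mahowald observation that $24\Delta$ lies in the image while $\Delta$ itself does not implies that the $\Delta$-component of any class in the image must be divisible by $24$. Combining this with the previous paragraph, any class of the form $\phi_W(M) \in \pi_{24}\tmf$ has $\Delta$-coefficient a multiple of $24$, and multiplying through by the denominator $9906624000$ of the rational coefficient attached to $-1177 p_3^2 - 311p_6$ yields divisibility of $-1177 p_3^2 - 311 p_6$ by $24 \cdot 9906624000 = 237758976000$.

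The principal obstacle is the concrete calculation in the second paragraph: one must expand $\phi_W$ as a polynomial in Pontryagin classes with coefficients in $MF_{\ast}(\QQ)$, specialize to the degree $24$ part, and split the result into its $E_{12}$ and $\Delta$ summands. This is ultimately a calculation with Eisenstein series and Newton's identities relating Pontryagin classes to power sums of Chern roots, and the specific integer coefficients $1177$ and $311$ (as well as the denominator $9906624000$) will appear directly from matching the $q$-expansion of the $\Delta$-part against the known $q$-expansions of $E_4$ and $E_6$.
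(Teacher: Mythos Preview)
Your approach is essentially the same as the paper's: string structure from $11$-connectedness, the Ando--Hopkins--Rezk orientation, the Hopkins--Mahowald description of the image of $\pi_{24}\tmf \to \MF_{12}$, and the resulting $24$-divisibility of the $\Delta$-coefficient, which is then identified with $\frac{-1177p_3^2 - 311p_6}{9906624000}$.

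Two small remarks. First, the relevant integral basis is $\{E_4^3, \Delta\}$, not $\{E_{12}, \Delta\}$: the normalized Eisenstein series $E_{12}$ has $691$ in its denominators and does not lie in $\MF_{12}(\Z)$, so the Hopkins--Mahowald statement ``image $= \Z\{E_4^3, 24\Delta\}$'' does not translate to a clean $24$-divisibility of the $\Delta$-coefficient in your basis. Second, the paper bypasses your proposed direct expansion of the characteristic series by citing Hirzebruch's identity $\phi_W(M) = \Ah(M)(E_4^3 - 744\Delta) + \Ah(M,T_\CC)\Delta$ together with the Hopkins--Mahowald formula $\Ah(M,T_\CC) = \frac{-1177p_3^2 - 311p_6}{9906624000}$ for $11$-connected $24$-manifolds; this identifies the $\Delta$-coefficient with a twisted $\Ah$-genus and avoids wrestling with Newton's identities.
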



The Witten genus is a cobordism invariant of string manifolds which takes values in the ring of integral modular forms, c.f. \Cref{dfn:mf}. However, not every integral modular form is the Witten genus of a string manifold. Indeed, the Witten genus factors through the coefficient ring of the spectrum of topological modular forms and it is this restriction which will provide the leverage we need to prove \Cref{prop:divisibility}.

%
%

\begin{dfn} \label{dfn:mf}
  Let $\MF_n$ denote the group of integral modular forms of weight $n$. The direct sum of these groups, $\MF_* = \bigoplus_{n \geq 0} \MF_n$, is a graded ring, and has explicit generators and relations
  \[\MF_* \cong \Z[E_4, E_6, \Delta] / (E_4 ^3 - E_6 ^2 - 1728 \Delta),\]
  where $E_4$ and $E_6$ are the weight $4$ and $6$ normalized Eisenstein series, respectively, and $\Delta$ is the discriminant.
\end{dfn}
%

Let $\Omega_* ^{\Str}$ denote the cobordism ring of string manifolds.
The Witten genus is a ring map
\[\phi_W : \Omega_{2*} ^{\Str} \to \MF_{*}.\]

In fact, by work as Ando, Hopkins and Rezk \cite{AHR}, the Witten genus can be refined to a map of $\mathbb{E}_\infty$-ring spectra as follows. Let $\MStr$ denote the Thom spectrum of the canonical map 
$ \BStr = \tau_{\geq 8} \BO \to \BO $.
Then there is a canonical isomorphism $\pi_* \MStr \cong \Omega_* ^{\Str}$.
Let $\tmf$ denote the connective spectrum of topological modular forms \cite{HMtmf, Behtmf, ECII}. This is an $\mathbb{E}_\infty$-ring spectrum which comes equipped with a ring map $\pi_{2*} \tmf \to \MF_{*}$. Ando, Hopkins and Rezk \cite{AHR} proved the Witten genus lifts to a map of $\mathbb{E}_\infty$-ring spectra.

\begin{thm}[\cite{AHR}]
    There is a map of $\mathbb{E}_\infty$-rings $\MStr \to \tmf$ such that the induced map
    \[\Omega^{\Str} _{2*} \cong \pi_{2*} \MStr \to \pi_{2*} \tmf \to \MF_{*}\]
    is the Witten genus $\phi_W$.
\end{thm}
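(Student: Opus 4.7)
The plan is to follow the Ando--Hopkins--Rezk approach: construct the orientation by chromatic and arithmetic assembly, using the theory of $\sigma$-orientations at each $K(n)$-local piece.

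First I would reformulate the problem in terms of units. Because $\MStr$ is the Thom spectrum of $\BStr \to \BO \to B\GL_1(\Ss)$, the universal property of Thom $\E_\infty$-ring spectra (Ando--Blumberg--Gepner--Hopkins--Rezk) identifies the space of $\E_\infty$-ring maps $\MStr \to \tmf$ with the space of $\E_\infty$ (i.e.\ infinite loop) null-homotopies of the composite $\BStr \to B\GL_1(\Ss) \to B\GL_1(\tmf)$. Producing the orientation is then equivalent to producing a suitable $\E_\infty$ characteristic class for string bundles valued in $\tmf$-units. I then split the task along the arithmetic fracture square for $\tmf$ into constructions of $\MStr_\QQ \to \tmf_\QQ$ and of $\MStr \to \tmf^{\wedge}_p$ for each prime $p$, together with compatibility data.

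Rationally, $\tmf_\QQ$ is equivalent to the graded ring of rational modular forms as an $\E_\infty$-algebra, and the classical Witten genus is a ring map on homotopy; since rational connective $\E_\infty$-rings are formal, this lifts uniquely to a rational $\E_\infty$-map. For each prime $p$, further decompose $\tmf^{\wedge}_p$ chromatically via its $K(1)$- and $K(2)$-localizations. At height two, $L_{K(2)} \tmf$ is a product of Lubin--Tate spectra attached to supersingular elliptic formal groups, and the theorem of Ando--Hopkins--Strickland identifies multiplicative ($H_\infty$) orientations $\MStr \to L_{K(2)}\tmf$ with cubical structures on the associated formal group; the Weierstrass $\sigma$-function supplies exactly such a cubical structure. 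At height one, $L_{K(1)}\tmf$ is closely related to a $p$-adic form of $K$-theory and a candidate multiplicative orientation is given by a $p$-adic variant of the $\Ah$-genus.

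The main obstacle is to upgrade these multiplicative orientations to $\E_\infty$-ring orientations. This step uses Rezk's logarithmic cohomology operation $\ell_n \colon \mathrm{gl}_1(L_{K(n)}\tmf) \to L_{K(n)}\tmf$: the obstruction to lifting an $H_\infty$-orientation to an $\E_\infty$-orientation is measured by the composite of the characteristic class with $\ell_n$, and its vanishing amounts to a Kummer-type congruence. At height two the congruence is a modular-forms identity verified from the transformation properties of the Weierstrass $\sigma$-function together with the description of the power operations on $L_{K(2)}\tmf$ coming from the Frobenius isogeny on the universal elliptic curve; at height one it reduces to classical $p$-adic properties of Bernoulli numbers. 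This modular/congruence input is the heart of the theorem. Once it is in hand, the rational, $K(1)$-local, and $K(2)$-local characteristic classes all specialize to the classical Witten-genus formula, so the pieces agree on the overlaps in the chromatic and arithmetic fracture squares and assemble to a single $\E_\infty$-ring map $\MStr \to \tmf$; the identification of the induced map on $\pi_{2*}$ with $\phi_W$ then follows directly from the construction.
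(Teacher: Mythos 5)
The paper does not prove this theorem; it is cited verbatim from Ando--Hopkins--Rezk \cite{AHR} and used as a black box, so there is no internal argument to compare your proposal against. Your sketch is a reasonable high-level summary of the AHR construction (units and the Thom-spectrum universal property, arithmetic and chromatic fracture, cubical structures via the Weierstrass $\sigma$-function at height two, Rezk's logarithm to control the $\mathbb{E}_\infty$-lifting obstruction, and the modular/$p$-adic congruence input). Two small imprecisions worth noting, though neither affects the paper: $L_{K(2)}\tmf$ is a homotopy fixed-point spectrum of a Lubin--Tate spectrum for a finite (and Galois-equivariant) group action rather than a bare product of Lubin--Tate spectra, and ``rational connective $\mathbb{E}_\infty$-rings are formal'' is too strong as stated --- what one actually needs is that the relevant rational mapping space is computed by a (collapsing) obstruction theory, which AHR verify in their setting.
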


Hopkins and Mahowald have determined the image of the map $\pi_{2*} \tmf \to \MF_*$:

\begin{thm} [{\cite[Proposition 4.6]{HopkinsICM}}]
    The image of the map $\pi_{2*} \tmf \to \MF_*$ has a basis given by the monomials
    \[a_{i,j,k} E_4 ^i E_6 ^j \Delta^k \,\,\, i,k \geq 0, j=0,1\]
    where
    \begin{align*}
        a_{i,j,k} = \begin{cases} 1 & i>0, j=0 \\ 2 & j=1 \\ 24/\gcd(24,k) & i,j=0. \end{cases}
    \end{align*}
\end{thm}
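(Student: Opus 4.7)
The plan is to interpret the map $\pi_{2*}\tmf \to \MF_*$ as the edge homomorphism of the descent spectral sequence
\[ E_2^{s,2t} = H^s(\mathcal{M}_{\mathrm{ell}}, \omega^{\otimes t}) \Longrightarrow \pi_{2t-s}\tmf \]
associated to the presentation $\tmf = \Gamma(\mathcal{M}_{\mathrm{ell}}, \mathcal{O}^{\mathrm{top}})$, with $\omega$ the Hodge line bundle. Since $E_2^{0,2*} = H^0(\mathcal{M}_{\mathrm{ell}}, \omega^{\otimes *}) = \MF_*$, the image I want is exactly the subring of permanent cycles on the $0$-line: a modular form $f$ lifts to $\tmf$ if and only if $d_r(f) = 0$ for every $r \geq 2$. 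Determining $a_{i,j,k}$ is then the same as determining the smallest positive integer such that every differential out of the $0$-line kills the corresponding scalar multiple of $E_4^i E_6^j \Delta^k$.

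The first reduction is to the primes $2$ and $3$. Over $\Z[1/6]$ the stack $\mathcal{M}_{\mathrm{ell}}[1/6]$ has vanishing higher cohomology with coefficients in $\omega^{\otimes *}$ (the universal Weierstrass presentation trivializes the stacky isotropy), so the descent spectral sequence collapses and $\pi_{2*}\tmf[1/6] \xrightarrow{\cong} \MF_*[1/6]$. In particular every $a_{i,j,k}$ divides $24 = 2^3 \cdot 3$, and the theorem reduces by arithmetic fracture to the corresponding $2$-local and $3$-local assertions.

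The substance of the proof is then the prime-local descent computation. At $p=3$ a small Hopf algebroid (coming from the short Weierstrass form of $\mathcal{M}_{\mathrm{ell},(3)}$) computes the $E_2$-page; the key nonzero differentials $d_5$ and $d_9$ propagate along powers of $\Delta$ and show that $E_4$, $E_6$, and $\Delta^3$ are permanent cycles while $\Delta$ and $\Delta^2$ acquire $3$-torsion, reproducing the $3$-local prediction $\nu_3(a_{i,j,k}) = 1$ exactly when $i = j = 0$ and $3 \nmid k$. The main obstacle is the case $p = 2$: the $2$-local cohomology of $\mathcal{M}_{\mathrm{ell}}$ is considerably richer and supports a dense pattern of nontrivial differentials $d_3, d_5, d_7, d_9, d_{11}, d_{13}, d_{23}$ originally computed by Hopkins and Mahowald (and later written up in detail by Bauer and Henriques). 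The outputs one needs to extract are that every $E_4^i \Delta^k$ with $i \geq 1$ survives on the $0$-line, that $E_6 \cdot E_4^i \Delta^k$ picks up a factor of $2$ from $d_3$, and that the $2$-torsion order of $\Delta^k$ as a permanent cycle on the $0$-line is $2^{\max(3 - \nu_2(k),\,0)}$, forced by successive differentials on $\Delta$, $\Delta^2$, $\Delta^4$ hitting nonzero classes of the appropriate $2$-adic order. Reassembling the $2$- and $3$-local answers then produces the claimed integral formula for $a_{i,j,k}$.
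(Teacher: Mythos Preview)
The paper does not prove this theorem; it is quoted verbatim from Hopkins's ICM address \cite[Proposition 4.6]{HopkinsICM} as a known result of Hopkins and Mahowald, and only its weight-$12$ specialization (that $24 \mid$ the $\Delta$-coefficient) is used downstream. There is therefore no ``paper's own proof'' to compare against.

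As for your outline: the strategy is the standard and correct one. The edge homomorphism of the descent spectral sequence for $\tmf$ does identify the image in $\MF_*$ with the permanent cycles on the $0$-line, the reduction to $p \in \{2,3\}$ via vanishing of higher cohomology over $\Z[1/6]$ is right, and the $3$-local and $2$-local patterns you describe for the $\Delta^k$ are the ones that come out of the Hopkins--Mahowald computation (as written up by Bauer). Two small caveats. First, what you have written is a roadmap, not a proof: the entire content of the theorem lives in the explicit determination of the differentials at $p=2$ and $p=3$, which you defer to the same literature the paper cites. So your argument operates at the same logical level as the paper's bare citation. Second, be careful with the attribution of the factor of $2$ on $E_6$ to a specific $d_3$: in the usual presentations of the $2$-local spectral sequence the class playing the role of $c_6$ and the differential responsible for the factor of $2$ are not quite packaged this way, and one has to be precise about which model of the $E_2$-page one is using before naming the differential. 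None of this affects the correctness of the overall plan.
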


Specializing to the case of $24$-dimensional manifolds, we obtain the following result.

\begin{cor}
    The image of the Witten genus $\Omega^{\Str} _{24} \to \MF_{12}$ lies in the subspace of $\MF_{12} = \Z\{E_4 ^3, \Delta\}$ spanned by $E_4 ^3$ and $24\Delta$.
\end{cor}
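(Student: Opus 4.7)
The plan is to apply the two preceding theorems in sequence and then read off the answer by enumerating basis elements in a single graded piece. By the Ando--Hopkins--Rezk theorem, the Witten genus $\Omega^{\Str}_{24} \to \MF_{12}$ factors through $\pi_{24}\tmf$. Therefore the image of the Witten genus is contained in the image of $\pi_{24}\tmf \to \MF_{12}$, and the Hopkins--Mahowald theorem identifies the latter image with the $\Z$-span of the elements $a_{i,j,k} E_4^i E_6^j \Delta^k$ with $i,k \geq 0$ and $j \in \{0,1\}$. It therefore suffices to list the triples $(i,j,k)$ with $4i + 6j + 12k = 12$ subject to these constraints and compute the corresponding $a_{i,j,k}$.

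The enumeration is immediate. For $j=1$ one needs $4i + 12k = 6$, which has no non-negative integer solutions. For $j = 0$ the equation $4i + 12k = 12$ has exactly the two solutions $(i,k) = (3,0)$ and $(i,k) = (0,1)$, corresponding to the monomials $E_4^3$ with coefficient $a_{3,0,0} = 1$ and $\Delta$ with coefficient $a_{0,0,1} = 24/\gcd(24,1) = 24$. Thus the image of $\pi_{24}\tmf \to \MF_{12}$ is the $\Z$-span of $E_4^3$ and $24\Delta$, from which the corollary follows.

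There is essentially no obstacle here: the corollary is a direct consequence of the stated theorems combined with a bookkeeping computation of which weight-$12$ monomials appear in the Hopkins--Mahowald basis. The only thing worth checking carefully is that $E_6^2$, which also has weight $12$, is correctly accounted for by the defining relation $E_4^3 - E_6^2 - 1728\Delta = 0$ in $\MF_*$, so that restricting to $j \in \{0,1\}$ in the basis does not lose information. Given the explicit presentation recorded in \Cref{dfn:mf}, this is automatic, and the proof is complete.
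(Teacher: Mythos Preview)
Your proof is correct and follows exactly the approach the paper intends: the corollary is stated immediately after the Ando--Hopkins--Rezk and Hopkins--Mahowald theorems with the phrase ``Specializing to the case of $24$-dimensional manifolds,'' and your argument carries out precisely that specialization. The enumeration of weight-$12$ basis monomials and the computation of the coefficients $a_{3,0,0}=1$ and $a_{0,0,1}=24$ are both correct.
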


%
%
%
%

\begin{rmk}
  At this point one could obtain the desired divisibility criterion on the Pontryagin numbers as follows.
  The Witten genus is determined by the following characteristic series:
  \[\exp \left( \sum_{k \geq 1} 2G_{2k} \frac{z^{2k}}{(2k)!} \right), \]
  where $G_{2k}$ is the weight $2k$ Eisenstein series. From this, one can extract a polynomial in the Pontryagin numbers which computes the Witten genus for $24$-manifolds. Plugging in the fact that all terms except $p_3^2$ and $p_6$ are zero due to the $11$-connectedness assumption would prove the proposition.

  Instead of doing this, we will cite results of Hirzebruch and Hopkins--Mahowald, since they have already analyzed the $24$-dimensional case in connection with the Hirzebruch prize manifold.
\end{rmk}


\begin{lem}[{\cite[Example on pg. 85-86]{Hirzebruch}}]
    Let $M$ denote a $24$-dimensional string manifold. Then
    \[\phi_W (M) = \Ah(M) \overline{\Delta} + \Ah(M, T_\CC)\Delta,\]
    where $\overline{\Delta} = E_4 ^3 - 744 \Delta$, $\Ah(M)$ is the $\Ah$-genus of $M$, and $\Ah(M, T_\CC)$ is the twisted $\Ah$-genus of $M$, where the twisting is by the complexified tangent bundle $T_\CC$.
\end{lem}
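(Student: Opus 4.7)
The plan is to exploit the fact that $\MF_{12}$ is a free abelian group of rank two and that the basis $\{\overline{\Delta}, \Delta\}$ is distinguished by especially simple low-order $q$-expansions,
\[\overline{\Delta} = 1 - 24q + O(q^2), \qquad \Delta = q + O(q^2),\]
so that any element of $\MF_{12}$ is determined by the $q^0$ and $q^1$ coefficients of its $q$-expansion. Since $\phi_W(M) \in \MF_{12}$, we may write $\phi_W(M) = a \overline{\Delta} + b \Delta$, and it suffices to determine the rationals $a, b$ by matching the first two coefficients of the $q$-expansion of $\phi_W(M)$.

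The main computation is thus to expand $\phi_W(M)$ up to order $q^2$. Starting from the characteristic power series
\[\phi_W(x, q) = \frac{x/2}{\sinh(x/2)} \cdot \prod_{n \geq 1} \frac{(1 - q^n)^2}{(1 - q^n e^x)(1 - q^n e^{-x})},\]
in which the first factor is the $\Ah$-series, one finds by direct $q$-expansion that the product factor equals $1 + q(e^x + e^{-x} - 2) + O(q^2)$. For a $24$-dimensional manifold with Pontryagin roots $x_1, \dots, x_{12}$, one has $\sum_i (e^{x_i} + e^{-x_i}) = \mathrm{ch}(T_\CC)$ while $\sum_i 2 = 24 = \mathrm{rank}_\CC T_\CC$, so integrating against the fundamental class yields
\[\phi_W(M) = \Ah(M) + q\bigl(\Ah(M, T_\CC) - 24\, \Ah(M)\bigr) + O(q^2).\]

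Matching coefficients in $\phi_W(M) = a \overline{\Delta} + b \Delta$ then gives $a = \Ah(M)$ from the $q^0$-coefficient and $-24 a + b = \Ah(M, T_\CC) - 24\, \Ah(M)$ from the $q^1$-coefficient, so $b = \Ah(M, T_\CC)$; substitution reproduces the asserted formula. The only nontrivial step is the expansion of the product factor of the Witten characteristic series, which is a routine $q$-expansion bookkeeping carried out explicitly in the cited pages of Hirzebruch; the essential conceptual content is simply the happy numerical coincidence that the $q^1$-coefficient of $\overline{\Delta}$ is exactly $-24$, which cancels the rank contribution and leaves $\Ah(M, T_\CC)$ as the coefficient of $\Delta$.
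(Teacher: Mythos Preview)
Your argument is correct and is essentially the standard computation: the paper does not supply its own proof of this lemma but simply cites Hirzebruch, and the approach you describe---expanding the Witten characteristic series to first order in $q$, identifying the $q^0$ and $q^1$ coefficients as $\Ah(M)$ and $\Ah(M,T_\CC)-24\Ah(M)$, and then matching against the $q$-expansions of $\overline{\Delta}$ and $\Delta$---is precisely the computation carried out on the cited pages of Hirzebruch. One small point worth making explicit is that the string hypothesis is what guarantees $\phi_W(M)$ lands in $\MF_{12}$ rather than merely in quasimodular forms, so that the two-coefficient matching suffices; you invoke this implicitly when you write ``since $\phi_W(M)\in\MF_{12}$''.
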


Since $744$ is divisible by $24$, we have the following corollary:

\begin{cor} \label{cor:24}
    Let $M$ denote a $24$-dimensional string manifold. Then $\Ah(M,T_\CC)$ is divisible by $24$.
\end{cor}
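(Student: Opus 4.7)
The plan is to combine the Hirzebruch expansion $\phi_W(M) = \Ah(M)\overline{\Delta} + \Ah(M,T_\CC)\Delta$ with the Hopkins--Mahowald description of the image of $\pi_{24}\tmf \to \MF_{12}$, reading off the desired divisibility by $24$ as a direct linear-algebra computation in $\MF_{12} = \Z\{E_4^3, \Delta\}$.

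First I would substitute $\overline{\Delta} = E_4^3 - 744\Delta$ into the Hirzebruch formula to rewrite the Witten genus in the monomial basis $\{E_4^3, \Delta\}$ as
\[
\phi_W(M) \;=\; \Ah(M)\, E_4^3 \;+\; \bigl(\Ah(M,T_\CC) - 744\,\Ah(M)\bigr)\,\Delta.
\]
Since $M$ is a string manifold, the Ando--Hopkins--Rezk refinement implies that $\phi_W(M)$ lies in the image of $\pi_{24}\tmf \to \MF_{12}$, which by the preceding corollary is the $\Z$-span of $E_4^3$ and $24\Delta$. Reading off the $\Delta$-coefficient, we conclude that
\[
\Ah(M,T_\CC) - 744\,\Ah(M) \;\equiv\; 0 \pmod{24}.
\]

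The last step is the numerical observation $744 = 24 \cdot 31$, so the term $744\,\Ah(M)$ is already divisible by $24$ and can be dropped modulo $24$, yielding $\Ah(M,T_\CC) \equiv 0 \pmod{24}$ as claimed. There is essentially no obstacle here; the content of the corollary is entirely carried by the Hopkins--Mahowald integrality constraint on $\pi_{24}\tmf$, and the only thing to verify is the elementary divisibility $24 \mid 744$. The subsequent work of the section (extracting a Pontryagin-number statement as in \Cref{prop:divisibility}) will require expressing $\Ah(M,T_\CC)$ for an $11$-connected $24$-manifold in terms of $p_3^2$ and $p_6$ alone, but that expansion is not needed for the corollary itself.
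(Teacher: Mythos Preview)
Your proof is correct and follows exactly the paper's approach: the paper simply remarks ``Since $744$ is divisible by $24$'' and states the corollary as an immediate consequence of the Hirzebruch formula together with the description of the image of $\pi_{24}\tmf \to \MF_{12}$. You have just spelled out the linear-algebra step in the basis $\{E_4^3,\Delta\}$ that the paper leaves implicit.
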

%
%
%
%
%

Finally, we recall the formula for $\Ah(M,T_\CC)$ in terms of Pontryagin numbers, conveniently provided by Hopkins and Mahowald \cite[pg. 98]{HirzebruchPrize}, specialized to the case of $11$-connected $24$-manifolds:

\begin{prop}[{\cite[pg. 98]{HirzebruchPrize}}] \label{prop:twist-Ah-p}
    Let $M$ denote a smooth, closed, oriented $11$-connected $24$-manifold. Then
    \[\Ah(M, T_\CC) = \frac{-1177 p_3 ^2 - 311 p_6}{9906624000}.\]
\end{prop}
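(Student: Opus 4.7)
The plan is to derive the stated formula directly from the characteristic-class description of the twisted $\widehat{A}$-genus, using the $11$-connectedness of $M$ to discard all but two Pontryagin numbers. Recall that the $\widehat{A}$-genus has multiplicative series $\prod_i (x_i/2)/\sinh(x_i/2)$ in formal Pontryagin roots $x_i$ of $TM$, while $\mathrm{ch}(T_\CC) = \sum_j (e^{x_j} + e^{-x_j}) = \sum_j 2\cosh(x_j)$. Hence $\widehat{A}(M, T_\CC)$ is the evaluation on the fundamental class $[M]$ of the degree-$24$ component of
\[
\prod_i \frac{x_i/2}{\sinh(x_i/2)} \cdot \sum_j 2\cosh(x_j).
\]

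First I would expand this product as a formal power series in the $x_i$ up through total degree $12$ (cohomological degree $24$) and rewrite the coefficients in terms of the elementary symmetric polynomials in the $x_i^2$, i.e.\ the Pontryagin classes $p_k$ of $TM$. The cleanest route is to expand each factor in terms of power sums $s_k := \sum_i x_i^{2k}$ (using that $(y/2)/\sinh(y/2)$ and $\cosh(y)$ are both even in $y$), then convert to elementary symmetric polynomials via Newton's identities.

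Second I would invoke the hypothesis that $M$ is $11$-connected, which forces $p_1 = p_2 = 0$ integrally on $M$. Consequently every Pontryagin monomial in cohomological degree $24$ other than $p_3^2$ and $p_6$ pairs trivially with $[M]$, so the problem reduces to isolating the coefficients of $p_3^2$ and $p_6$ in the degree-$12$ expansion from the first step.

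The third and final step is to perform the symmetric-function bookkeeping, clear the common denominator, and match the result against the stated formula. The hard part here is purely computational: it is a bounded but lengthy rational arithmetic, best delegated to a computer algebra system, with the only subtlety being the care needed in converting between power-sum and elementary symmetric bases (and keeping track of the combinatorial factor $2\cosh(x_j)$ summed over $j$, which contributes an extra factor of $s_k$ at each order). Since the calculation appears already in Hopkins--Mahowald \cite{HirzebruchPrize}, I would carry it out independently as a sanity check and otherwise cite their result.
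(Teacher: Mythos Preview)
Your proposal is correct and matches what the paper does: the paper simply cites the formula from Hopkins--Mahowald \cite[pg.~98]{HirzebruchPrize} without reproducing the computation, and your outline is exactly the standard derivation behind that citation. Your observation that $p_1 = p_2 = 0$ already suffices to kill every degree-$24$ Pontryagin monomial except $p_3^2$ and $p_6$ (since any such monomial involving $p_4$ or $p_5$ must also contain a factor of $p_1$ or $p_2$) is the key simplification, and the rest is precisely the bounded rational arithmetic you describe.
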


Combining this with \Cref{cor:24}, we obtain \Cref{prop:divisibility}.


\subsection{Application}\label{sec:Wittenapp}
To prove \Cref{thm:counterex}, we begin by using the results of \Cref{sec:class} to construct an element of $\Omega^{\langle 11 \rangle} _{24}$. Let $P \in A^{\langle 11 \rangle} _{24}$ and $Q \in A^{\langle 11 \rangle} _{24}$ represent the classes with $(\frac{\sig}{8}, \frac{\chi^2}{2}) = (1,0)$ and $(\frac{\sig}{8}, \frac{\chi^2}{2}) = (0,1)$, respectively.
Moreover, let $[\Sigma_Q]$ equal the image of $Q$ in $\coker(J)_{23}$ under the composite
\[A^{\langle 11 \rangle} _{24} \xrightarrow{\partial} \Theta_{23} \to \coker(J)_{23},\]
and let $\ord([\Sigma_Q])$ denote its order.
Then, for any choice of $s(Q)_6 \in \Z$ as in \Cref{dfn:manynumbers}, the class
\[\ord([\Sigma_Q]) (Q-s(Q)_6 P) \in A^{\langle 11 \rangle} _{24}\]
lifts to $\Omega^{\langle 11 \rangle} _{24}$. \footnote{This statement depends on a choice of integer $s(Q)_{6}$ and is true for all possible such choices.}

Assuming the class $N(Q - s(Q)_6 P)$ lifts to $\Omega^{\langle 11 \rangle} _{24}$ for some integer $N$ we can compute its Pontryagin numbers and check whether they are compatible with \Cref{prop:divisibility}. This will allow us to conclude that $\ord([\Sigma_Q])$ is even.

Following \Cref{dfn:manynumbers} step-by-step, we now compute (a choice of) $s(Q)_{6} \in \Z$:
\begin{itemize}
    \item $B_{6} = \frac{1}{42}$, $B_{12} = -\frac{691}{2730}$.
    \item $j_3 = \denom(\frac{\abs{B_6}}{4\cdot 3}) = \frac{1}{504}$.
    \item $k_3 = \num(\frac{\abs{B_6}}{4\cdot 3}) = 1$, $k_6 = \num(\frac{\abs{B_{12}}}{4 \cdot 6}) = 691$.
    \item $a_3 = 2$, $a_6 = 1$.
    \item $\sigma_3 = a_3 \cdot 2^{7} (2^5 -1) k_3 = 2 \cdot 2^7 (2^5-1)$.
    \item $\sigma_6 = a_6 \cdot 2^{13} (2^{11} -1) k_6 = 1 \cdot 2^{13} (2^{11}-1) 691$.
    \item We choose $c_6 = -18869$ and $d_6 = 199$.
\end{itemize}

    Finally, we find that
    \begin{align*}
      s(Q)_6 &= \frac{-\left( (2 \cdot 2^{7} (2^{5}-1))^2 + 2^2 \cdot (2^{13} (2^{11}-1)691)\cdot 1 \right) \left(-18869\cdot 1+2 \cdot (-1)\cdot 199\cdot504 \right)}{8 \cdot 504^2} \\
      &= -5005553600.
    \end{align*}


%
%

In order to compute the Pontryagin numbers of $N(Q-s(Q)_6 P)$, we make use of the formulas provided by Krannich and Reinhold. 

\begin{prop}[{\cite[Proposition 2.13]{KrannichCharacteristic}}]
  Given $n \geq 3$ and a choice of $s(Q)_{2n} \in \Z$, the Pontryagin numbers of a lift of $N (Q - s(Q)_{2n} P)$ to $\Omega^{\langle 4n-1 \rangle} _{8n}$ are given by
  \[p_n ^2 = 2 N a_n ^2 (2n-1)!^2, \]
  \[p_{2n} = N a_n ^2 \left((2n-1)!^2+(4n-1)! j_{2n} \frac{\abs{B_{2n}}}{4n}\left(c_{2n} \frac{\abs{B_{2n}}}{4n}+2d_{2n}(-1)^n \right)\right),\]
  for the choices of $c_{2n}$ and $d_{2n}$ corresponding to the choice of $s(Q)_{2n}$.
\end{prop}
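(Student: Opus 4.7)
The plan is to exploit the explicit plumbing descriptions of $P$ and $Q$ to compute their Pontryagin classes, and then combine these with Hirzebruch's signature theorem and the definition of $s(Q)_{2n}$ to extract the stated formulas. First, I would recall that $P$ may be chosen as Milnor's $E_8$-plumbing of eight copies of a $4n$-disk bundle over $S^{4n}$ whose classifying element lies in the kernel of $\pi_{4n}\BSO(4n) \to \pi_{4n}\BSO$ (ensuring $\chi(P)=0$), while $Q$ is the plumbing of two copies of a $4n$-disk bundle over $S^{4n}$ whose classifying element maps to a generator of $\pi_{4n}\BSO\cong \Z$ (yielding $\chi^2(Q)/2 = 1$). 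By construction $\sig(P)=8$ and $\sig(Q)=0$, and hence $\sig\bigl(N(Q - s(Q)_{2n}P)\bigr) = -8Ns(Q)_{2n}$.

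For the Pontryagin class $p_n$, I would invoke the classical fact that the image of the composite $p_n\colon \pi_{4n}\BSO \cong \Z \to \Z$ equals $a_n(2n-1)!\,\Z$, where $a_n = 1$ or $2$ as in \Cref{dfn:manynumbers}; this is standard and can be verified using Bott periodicity and the Chern character. Applying it to the bundles underlying $P$ and $Q$, and using the fact that each pair of spheres in a plumbing intersects transversely once, yields $p_n^2(Q) = 2a_n^2(2n-1)!^2$ and $p_n^2(P) = 0$. Linearity then gives
\[
p_n^2\bigl(N(Q - s(Q)_{2n}P)\bigr) = 2Na_n^2(2n-1)!^2,
\]
which is the first formula.

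To obtain the expression for $p_{2n}$, I would apply Hirzebruch's signature theorem, which for our $(4n-1)$-connected $8n$-manifolds reduces to an expression of the form $\sig = \alpha_n\,p_n^2 + \beta_n\,p_{2n}$, where $\alpha_n$ and $\beta_n$ are explicit rational coefficients extracted from $L_{2n}$. These coefficients are classical and are expressed in terms of $B_{2n},B_{4n}$ together with the factorials $(2n)!,(4n)!$; in particular, the coefficient $\beta_n$ of $p_{2n}$ involves a factor of $\tfrac{|B_{4n}|}{8n}\cdot 2^{4n}(2^{4n-1}-1)\cdot(4n-1)!$, which matches the structure of $\sigma_{2n}$. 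Solving for $p_{2n}$ in terms of the already-computed $p_n^2$ and the signature $-8Ns(Q)_{2n}$, and substituting the formula for $s(Q)_{2n}$ from \Cref{dfn:manynumbers} together with the Bezout relation $c_{2n}k_{2n}+d_{2n}j_{2n}=1$ and the identifications $|B_{2n}|/(4n)=k_n/j_n$ and $|B_{4n}|/(8n)=k_{2n}/j_{2n}$, should yield the claimed formula.

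The main obstacle is the arithmetic bookkeeping required to reconcile the Hirzebruch $L$-polynomial coefficients with the precise combinatorial form of $s(Q)_{2n}$, and in particular to see the factor $c_{2n}\tfrac{|B_{2n}|}{4n} + 2d_{2n}(-1)^n$ arise naturally. The alternating sign $(-1)^n$ reflects the standard sign convention on Bernoulli numbers and likely demands splitting the computation into cases based on the parity of $n$; similarly, carefully pairing powers of $2$, factors of $k_n$ versus $k_{2n}$, and the $j_{2n}^2$ in the denominator of $s(Q)_{2n}$ against the denominators in the $L$-polynomial is the bulk of the verification. Once this is carried out the desired identity follows.
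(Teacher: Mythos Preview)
The paper does not prove this proposition at all: it is quoted verbatim from Krannich--Reinhold \cite[Proposition 2.13]{KrannichCharacteristic} and used as a black box to plug in the values at $n=3$. There is therefore no ``paper's own proof'' to compare against.

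That said, your sketch is a correct outline and is essentially the argument Krannich--Reinhold give. The computation of $p_n^2$ from the plumbing description of $Q$ together with Bott's integrality $p_n(\text{generator of }\pi_{4n}\BSO) = \pm a_n(2n-1)!$ is exactly right, and the vanishing of $p_n^2$ on the Milnor plumbing follows since the bundles used there are stably trivial. For $p_{2n}$, invoking Hirzebruch's signature formula in the truncated form $\sig = s_n p_n^2 + s_{2n} p_{2n}$ (only $p_n$ and $p_{2n}$ survive on a $(4n-1)$-connected $8n$-manifold) and solving is again the standard route; the explicit coefficient of $p_{2n}$ in $L_{2n}$ is $2^{4n}(2^{4n-1}-1)\tfrac{|B_{4n}|}{(4n)!}$, which after unwinding $|B_{4n}|/8n = k_{2n}/j_{2n}$ and the Bezout identity $c_{2n}k_{2n}+d_{2n}j_{2n}=1$ produces the stated expression. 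The ``obstacle'' you flag is real but purely arithmetical, and you have identified all of the ingredients needed to carry it out.
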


Plugging in the values for $n = 3$ and making sure to choose the same $c_6$ and $d_6$ that we did above, we obtain:

\begin{cor}
    The Pontryagin numbers of any lift of $N (Q + 5005553600 \cdot P)$ to $\Omega^{\langle 11 \rangle} _{24}$ are
    \[p_3 ^2 = 115200 \cdot N \qquad \text{ and } \qquad p_6 = -9038281766400 \cdot N.\]
\end{cor}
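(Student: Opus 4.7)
The plan is to substitute $n = 3$ directly into the Krannich--Reinhold formula cited immediately above. First observe that $N(Q + 5005553600 \cdot P)$ is the same class as $N(Q - s(Q)_6 P)$ for the value $s(Q)_6 = -5005553600$ computed in the preceding paragraph, and that value was itself obtained from the specific choices $c_6 = -18869$, $d_6 = 199$. Since the formula is sensitive to the choice of $c_{2n}, d_{2n}$, one must invoke it with these same values. Pontryagin numbers are closed-manifold cobordism invariants, so the answer is automatically independent of the particular lift to $\Omega^{\langle 11 \rangle}_{24}$.

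The calculation of $p_3^2$ is immediate: substituting $a_3 = 2$ and $(2 \cdot 3 - 1)! = 5! = 120$ into $p_n^2 = 2 N a_n^2 (2n-1)!^2$ gives $p_3^2 = 2 N \cdot 4 \cdot 14400 = 115200 \cdot N$.

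For $p_6$, the inputs needed are $a_3 = 2$, $(2n-1)! = 120$, $(4n-1)! = 11! = 39916800$, $\tfrac{\abs{B_6}}{12} = \tfrac{1}{504}$, $j_6 = 65520$ (the denominator of $\tfrac{\abs{B_{12}}}{24} = \tfrac{691}{65520}$), and $(-1)^n = (-1)^3 = -1$, together with the above choices of $c_6$ and $d_6$. Plugging these into the second displayed formula and simplifying $(2n-1)!^2 + (4n-1)! j_{2n} \tfrac{\abs{B_{2n}}}{4n}\bigl( c_{2n} \tfrac{\abs{B_{2n}}}{4n} + 2 d_{2n}(-1)^n \bigr)$ yields $-2259570441600$, and multiplying by $a_3^2 = 4$ produces the asserted value $-9038281766400 \cdot N$.

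The only real obstacle is arithmetic bookkeeping: ensuring consistency between the values of $c_6, d_6$ used above to define $s(Q)_6$ and those substituted into the Krannich--Reinhold formula, and then performing the multi-digit arithmetic without error. There is no conceptual content beyond a direct substitution.
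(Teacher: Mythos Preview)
Your proposal is correct and follows exactly the same approach as the paper: the paper's own proof is simply the remark ``Plugging in the values for $n=3$ and making sure to choose the same $c_6$ and $d_6$ that we did above,'' and your write-up is nothing more than a fully expanded version of that substitution, with all the arithmetic checked.
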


Finally, plugging these values into \Cref{prop:divisibility}, we learn that
\[ 2810905493760000 \cdot N = 2^{11} \cdot 3^6 \cdot 5^4 \cdot 7^2 \cdot 13 \cdot 4729 \cdot N \]
is divisible by
\[ 237758976000 = 2^{12} \cdot 3^6 \cdot 5^3 \cdot 7^2 \cdot 13, \]
and in particular that $N$ must be even.


\section{Reduction to homotopy theory} \label{sec:homotopy}
In this section, we use standard arguments to reduce the proof of \Cref{thm:main} to \Cref{thm:unit-ker}, a statement in stable homotopy theory.
We then further divide the proof of \Cref{thm:unit-ker} into several cases which are the subjects of sections \ref{sec:inch} through \ref{sec:medium}.
In the statement of \Cref{thm:unit-ker}, we have only included the cases not already in the literature.

\begin{thm} \label{thm:unit-ker}
  Let $\MO\langle n \rangle$ denote the Thom spectrum of the canonical vector bundle on $\tau_{\geq n} \mathrm{BO}$
  Then, the kernel of unit map 
  \[\pi_{2n-1} \Ss \to \pi_{2n-1} \mathrm{MO}\langle n \rangle \]
  may be described as follows:
  \begin{enumerate}
  \item When $n=9$, it is generated by the image of the $J$-homomorphism and $\eta \eta_4$.
  \item When $n=12$, it is generated by the image of the $J$-homomorphism and $\eta^3 \kappabar$.
  \item When $n \equiv 0 \mod 4$ and $16 \leq n \leq 124$, it is equal to the image of the $J$-homomorphism.
  \end{enumerate}
\end{thm}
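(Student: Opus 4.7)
The plan is to prove both containments: that the listed classes lie in $\ker(\pi_{2n-1}\Ss \to \pi_{2n-1}\MOn)$ and that no other classes do. That the image of the stable $J$-homomorphism is always contained in this kernel is classical, since every such class is represented by a homotopy sphere bounding a parallelizable (and therefore $(n-1)$-connected) $2n$-manifold, and this boundary construction is visible at the level of Thom spectra. The substantive work is to produce the two exceptional classes in the cases $n=9$ and $n=12$, and then to rule out all further classes in each case.

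\textbf{Producing the exceptional classes.} For $n=12$, the class $\eta^3\kappabar$ is in the kernel by combining the Witten-genus argument of \Cref{sec:witten}, which shows that the image of $A^{\langle 11\rangle}_{24}$ in $\coker(J)_{23}$ is non-trivial at the prime $2$, with the refined Adams spectral sequence analysis carried out in \Cref{sec:inch}, which will pin this image down to be exactly $\{0,\eta^3\kappabar\}$. For $n=9$, the class $\eta\eta_4$ is not detected by any genus, so a direct geometric construction is needed. Following the strategy of \Cref{sec:2-9}, the plan is to build the required exceptional almost-closed $18$-manifold by applying an $\E_\infty$-power operation to an explicit class in $\pi_*\MOn$, and then use a Toda-bracket description of $\eta\eta_4$ to witness that the resulting element lies in the unit kernel.

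\textbf{Ruling out all other classes is the main obstacle.} The strategy, for $n\equiv 0 \bmod 4$ with $16 \leq n \leq 124$, is to improve on the synthetic-spectra argument of \cite[Section 10]{Boundaries} as indicated in \Cref{sec:inch}, extending its reach into the range below $n=128$. Concretely, one isolates a cellular subquotient of $\MOn$ whose homotopy in the relevant total degree is tractable, identifies the unit map with a map factoring through this subquotient in that range, and controls the differentials and hidden extensions in the associated Adams spectral sequence tightly enough to confine potential non-$J$ kernel elements to the list stated. The hard part will be obtaining uniform control over these differentials across the full range of $n$. For the $3$-primary (and higher-prime) components when $n=12$, a separate argument is needed and is carried out in \Cref{sec:3-3}: the Ando--Hopkins--Rezk orientation $\MStr \to \tmf$ and the factorization of the Witten genus through $\pi_{23}\tmf$ are used to eliminate any odd-primary non-$J$ class from the kernel. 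Finally, a short homological-algebra argument, given in \Cref{sec:medium}, handles a residual point in the case $n=64$, completing the analysis at dimension $128$.
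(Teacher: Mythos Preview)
Your high-level decomposition matches the paper's, but your description of the central mechanism for ruling out extra classes is off in a way that would derail an actual attempt.

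For $n\equiv 0\bmod 4$, the paper does \emph{not} isolate a subquotient of $\MOn$ or analyze its Adams spectral sequence. The key input, \Cref{prop:w-filt} (from \cite[Lemma~6.9]{Boundaries}), is that the kernel modulo the image of $J$ is generated by a \emph{single} element $w\in\pi_{8m-1}\Ss_{(p)}$, given explicitly as a Toda bracket built from the $J$-map restricted to an $(8m-1)$-skeleton $M$ of $\Sigma^\infty\mathrm{O}\langle 4m-1\rangle$ (not of $\MOn$). One then bounds the $\F_p$-Adams filtration of $w$ from below by lifting this bracket to synthetic spectra, and compares with the quantity $\Gamma_p(8m-1)$ of \Cref{dfn:h}, an upper bound on the filtration of non-$J$ classes in the sphere, drawn from existing stem computations. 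Consulting \Cref{tbl:remaining}, this comparison already settles every case except $(p,8m-1)\in\{(2,23),(3,23),(2,31),(3,39),(2,47),(2,127)\}$; the improved synthetic lift of \Cref{sec:inch} handles four of these, and \Cref{sec:medium} dispatches $(2,127)$ by showing there is simply no class of filtration $49$ in stem $127$. The only place an Adams spectral sequence for a Thom spectrum is computed is the $3$-primary dimension-$23$ argument of \Cref{sec:3-3}. Your proposed ``uniform control over differentials'' in the $\MOn$ spectral sequence is neither what is done nor feasible across this range.

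For $n=9$, you have the pieces in the wrong order. The power operation $P^9$ is applied to $\eta\sigma\in\pi_8\Ss$, which lies in the image of $J$ and hence already in the kernel of the unit; therefore $P^9(\eta\sigma)$ is in the kernel as well. The Cartan formula gives $\Sq^9(h_1h_3)=h_1^2h_4$ on the Adams $\mathrm{E}_2$-page, so $P^9(\eta\sigma)$ is detected by $h_1^2h_4$ and cannot lie in the image of $J$. No Toda-bracket description of $\eta\eta_4$ is used. (Also, the residual case you flag is dimension $127$, not $128$.)
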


\begin{proof}[Proof of \Cref{thm:main} from \Cref{thm:unit-ker}]
  By \Cref{rmk:new}, the only new cases of \Cref{thm:main} that we need to establish are:
  \begin{itemize}
    \item When $n \equiv 0 \mod 4$ and $12 \leq n \leq 124$.
    \item When $n \equiv 1 \mod 8$ and $9 \leq n \leq 129$.
  \end{itemize}

  If $n = 9$ or $n = 4m$ and $3 \leq m \leq 31$, \Cref{thm:unit-ker} implies \Cref{thm:main} by \cite[Satz 1.7]{StolzBook}.
  On the other hand, \cite[Theorem 3.1]{Schultz} reduces the $n \equiv 1 \mod 8$, $n > 9$ case of \Cref{thm:main} to the $n \equiv 0 \mod 8$, $n > 8$ case, which we have already shown.
\end{proof}

To prove \Cref{thm:unit-ker}, we first note that the kernel of the unit map
\[\pi_{2n-1} \Ss \to \pi_{2n-1} \MOn\]
is well-known to contain the image of $J$ in degrees $n-1$ and above, so the question is mostly reduced to finding an effective upper bound on the size of this kernel.
Our proof of this upper bound in the $n \equiv 0 \mod 4$ case extends the methods and results of \cite{Boundaries}, so we begin by reviewing the main points of the arguments therein.

\begin{thm}[{\cite[Lemma 6.9 and Theorem 7.1]{Boundaries}}]\label{thm:known}
  The kernel of the $p$-localized unit map
  \[ u_{8m-1} : \pi_{8m-1} \Ss_{(p)} \to \pi_{8m-1} \MOfn_{(p)} \]
  is generated by the image of $J$ if any of the following conditions are met:
  \begin{itemize}
    \item $p \geq 5$.
    \item $m \geq 32$ and $p = 3$.
    \item $m \geq 17$ and $p = 2$.
  \end{itemize}
\end{thm}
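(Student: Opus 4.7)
The plan is to handle each prime separately. The easy direction $\im(J) \subseteq \ker(u_{8m-1})$ follows from the factorization $\Ss \to \MO \to \MOfn$ together with the fact that $\pi_*\MO$ is torsion-free above degree $0$, so every class in the image of $J$ in positive degrees vanishes already in $\MO$.

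For the reverse inclusion $\ker(u_{8m-1}) \subseteq \im(J)$, the strategy is to equip $\MOfn$ with a filtration via maps to convenient target spectra and read off the kernel from the associated spectral sequence. Natural such targets are the complexification $\MOfn \to \mathrm{MU}\langle 4m \rangle$ and, at $p=2$, maps to connective covers of $\mathrm{ku}$ or $\tmf$. An Adams-type or synthetic spectral sequence built on these maps should allow one to control the kernel of $u_{8m-1}$ in terms of tractable cohomological data.

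At $p \geq 5$, the argument is cleanest: $\MU$ splits at odd primes as a sum of suspensions of $\mathrm{BP}$, and the Postnikov tower of $\tau_{\geq 4m}\BO$ has especially simple $p$-local homotopy. An induction on this Postnikov tower, combined with the known structure of $\pi_*\mathrm{BP}$ and the sparseness of the stable stems at $p \geq 5$, should reduce the kernel to $\im(J)$. At $p = 3$ and $p=2$, the approach shifts to synthetic spectra based on $\mathrm{BP}\langle n \rangle$ or $\tmf$: the synthetic machinery yields a filtered enhancement of $\MOfn$ whose $E_2$-page is computable in a range, and the bounds $m \geq 32$ (respectively $m \geq 17$) are chosen so that the synthetic filtration is effective in stem $8m-1$.

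The main obstacle is the $p=2$ case, where $\pi_{8m-1}\Ss_{(2)}$ harbors many non-$J$ classes including $v_2$-periodic families built from $\eta$ and $\kappabar$. The constraint $m \geq 17$ should mark the threshold past which all such classes in stem $8m-1$ are detected by the $\tmf$-synthetic filtration and hence survive $u_{8m-1}$. The technical heart of the argument is therefore a careful study of the $\tmf$-Hurewicz image in stems $\geq 135$, together with the verification that synthetic-filtration $\geq 1$ classes in this range are not killed by the unit map to $\MOfn$.
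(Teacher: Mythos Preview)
Your proposal has genuine gaps in both directions.

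\textbf{Easy direction.} The factorization $\Ss \to \MO \to \MOfn$ does not exist: the Thom spectrum map induced by the connective cover $\tau_{\geq 4m}\BO \to \BO$ goes the other way, $\MOfn \to \MO$. Moreover, $\pi_*\MO$ (unoriented bordism) is an $\F_2$-algebra, not torsion-free. The actual reason the image of $J$ lies in $\ker(u_{8m-1})$ is the standard one: for $k \geq 4m-1$ an element of $\pi_k\Oo$ lifts to $\pi_k\Oo\langle 4m-1\rangle$, and the composite $\Sigma^\infty_+\Oo\langle 4m-1\rangle \xrightarrow{J} \Ss \to \MOfn$ is null (it is the fiber inclusion of the Thom diagonal). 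This is routine but not what you wrote.

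\textbf{Hard direction.} The central idea you are missing is that one does not need to show every non-$J$ class survives to $\MOfn$. The input from \cite[Lemma 6.9]{Boundaries} is that the kernel of $u_{8m-1}$ is generated by the image of $J$ together with a \emph{single} element $w \in \pi_{8m-1}\Ss_{(p)}$, which admits an explicit Toda-bracket description in terms of $J$ applied to a generator of $\pi_{4m-1}\Oo\langle 4m-1\rangle$. The whole problem then reduces to showing that this one class $w$ lies in the image of $J$.

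The method in \cite{Boundaries} is an Adams-filtration squeeze: using $\F_p$-synthetic spectra (not $\tmf$- or $\BP\langle n\rangle$-synthetic), one lifts the Toda bracket defining $w$ to the synthetic category and reads off that $w$ has $\F_p$-Adams filtration at least $2N_p-1$ for an explicit quantity $N_p$ depending on $m$. One then compares this lower bound against upper bounds $\Gamma_p(8m-1)$ on the Adams filtration of non-$J$ classes in $\pi_{8m-1}\Ss_{(p)}$, coming from vanishing lines (Davis--Mahowald at $p=2$, analogous results at $p=3$) and from direct computation in low stems. The thresholds $m \geq 17$ at $p=2$ and $m \geq 32$ at $p=3$ are precisely where $2N_p-1 > \Gamma_p(8m-1)$; at $p \geq 5$ the bound holds for all $m$.

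Your suggestion of detecting non-$J$ classes via the $\tmf$-Hurewicz map cannot work as stated: many non-$J$ classes in $\pi_{8m-1}\Ss_{(2)}$ map to zero in $\tmf$, so injectivity on the complement of $\im(J)$ is false. The argument must go through the single element $w$ and its Adams filtration.
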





\begin{table}
    \begin{center}
      \scalebox{0.9}{
      \begin{tabular}{|c||c|c|c|c|c|c|c|c|c|c|c|c|c|c|} \hline
        $m$  & 3 & 4 & 5 & 6 & 7 & 8 & 9 & 10 & 11 & 12 & 13 & 14 & 15 & 16 \\\hline
        $8m-1$  & 23 & 31 & 39 & 47 & 55 & 63 & 71 & 79 & 87 & 95 & 103 & 111 & 119 & 127 \\\hline
        $2N_2 - 1$  & 5 & 5 & 11 & 13 & 19 & 19 & 25 & 27 & 33 & 35 & 41 & 43 & 49 & 49 \\\hline
        $\Gamma_2(8m-1)$ & 9 & 5 & 9 & 13 & 0 & 9 & 15 & 14 & 15 & $\leq 19$ & $<37.9$ & $<41.3$ & $<42.7$ & $<49.1$ \\\hline
        $2N_3 - 1$  & 1 & 3 & 5 & 7 & 7 & 9 & 11 & 13 & 15 & 17 & 19 & 21 & 23 & 25 \\\hline
        $\Gamma_3(8m-1)$ & 5 & 0 & 5 & 5 & 5 & 0 & 0 & 0 & 0 & 8 & 0 & $ \leq 14.2 $ & $ \leq 14.7 $ & $ \leq 15.2 $ \\\hline
      \end{tabular}}

      \vspace{0.3cm}

      \scalebox{0.9}{
      \begin{tabular}{|c||c|c|c|c|c|c|c|c|c|c|c|c|c|c|c|} \hline
        $m$  & 17 & 18 & 19 & 20 & 21 & 22 & 23 & 24  \\\hline
        $8m-1$ & 135 & 143 & 151 & 159 & 167 & 175 & 183 & 191 \\\hline
        $2N_3 - 1$ & 27 & 29 & 31 & 33 & 33 & 35 & 37 & 39  \\\hline
        $\Gamma_3(8n-1)$ & $ \leq 15.7 $ & $ \leq 16.2 $ & $ \leq 16.7 $ & $ \leq 17.2 $ & $ \leq 17.7 $ & $ \leq 18.2 $ & $ \leq 18.7 $ & $ \leq 19.2 $ \\\hline
      \end{tabular}}
      
      \vspace{0.3cm}

      \scalebox{0.9}{
      \begin{tabular}{|c||c|c|c|c|c|c|c|c|c|c|c|c|c|c|c|} \hline
        $m$  & 25 & 26 & 27 & 28 & 29 & 30 & 31 \\\hline
        $8m-1$ & 199 & 207 & 215 & 223 & 231 & 239 & 247 \\\hline
        $2N_3 - 1$ & 41 & 43 & 45 & 47 & 49 & 51 & 53 \\\hline
        $\Gamma_3(8n-1)$ & $ \leq 19.7 $ & $ \leq 20.2 $ & $ \leq 20.7 $ & $ \leq 21.2 $ & $ \leq 21.7 $ & $ \leq 22.2 $ & $ \leq 22.7 $ \\\hline
      \end{tabular}}
      \vspace{0.3cm}
      \caption{}
      \label{tbl:remaining}
    \end{center}
\end{table}

In \cite{Boundaries}, \Cref{thm:known} is proven in two steps:
\begin{enumerate}
  \item A lower bound on the $\F_p$-Adams filtration of the classes in the kernel of $u_{8m-1}$ is established.
  \item This lower bound is compared to an upper bound on the $\F_p$-Adams filtration of elements in $\pi_{8m-1} \Ss_{(p)}$ which do not lie in the image of the $J$-homomorphism.
\end{enumerate}

More precisely, (1) and (2) are accomplished via the following proposition and definition from \cite{Boundaries}:

\begin{prop}[{\cite[Lemma 6.9 and Theorem 10.8]{Boundaries}}] \label{prop:w-filt}
  Suppose that $m \geq 3$. Then the kernel of the $p$-localized unit map
  \[u_{8m-1} : \pi_{8m-1} \Ss_{(p)} \to \pi_{8m-1} \MOfn_{(p)}\]
  is generated by image of $J$ and a single element $w \in \pi_{8m-1} \Ss_{(p)}$ which lies in $\F_p$-Adams filtration at least $2N_p-1$.\footnote{Note that $2N_p-1$ depends on $m$, though this is omitted from the notation. The general definition of $N_p$ is given in \cite[Definition 7.5]{Boundaries}.}
  Relevant values of $2N_p-1$ are summarized in \Cref{tbl:remaining}. 
\end{prop}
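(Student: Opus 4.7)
The plan is to study the unit map $u: \Ss_{(p)} \to \MOfn_{(p)}$ through the cofiber sequence $\Ss_{(p)} \to \MOfn_{(p)} \to \mathrm{cofib}(u)$. A class $x \in \pi_{8m-1}\Ss_{(p)}$ lies in the kernel of $u_{8m-1}$ precisely when it lifts along the connecting map $\partial: \pi_{8m}\mathrm{cofib}(u) \to \pi_{8m-1}\Ss_{(p)}$, so the kernel is controlled by $\pi_{8m}\mathrm{cofib}(u)$. First I would verify that the image of $J$ is contained in the kernel: image-of-$J$ classes are represented by framed manifolds that bound stably parallelizable manifolds, and such manifolds carry a $\tau_{\geq 4m}\mathrm{BO}$-structure for $m\ge 3$, so these classes are killed by $u_{8m-1}$. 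Granting this, the task splits into two pieces: (a) show that, modulo image of $J$, there is at most one additional generator $w$ of the kernel; and (b) bound its $\F_p$-Adams filtration from below by $2N_p - 1$.

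For (a), I would use the Thom isomorphism $\mathrm{H}^*(\MOfn;\F_p)\cong \mathrm{H}^*((\tau_{\geq 4m}\mathrm{BO})_+;\F_p)\cdot u$ to identify $\mathrm{H}^*(\mathrm{cofib}(u);\F_p)$ as a module over the Steenrod algebra $\A$. The low-dimensional cells of $\tau_{\geq 4m}\mathrm{BO}$ are controlled by its Postnikov tower, which begins in dimension $4m$ and has known $k$-invariants coming from $\mathrm{H}^*(\mathrm{BO};\F_p)$. Using this presentation I would compute a sufficient range of the $E_2$-page of the $\F_p$-Adams spectral sequence for $\mathrm{cofib}(u)$, show that the classes coming from the unit cell of $\MOfn$ in $\pi_{8m}\mathrm{cofib}(u)$ exactly account for the image of $J$ via $\partial$, and exhibit a single remaining generator $\widetilde{w}$, whose image $w = \partial(\widetilde{w})$ is the desired class.

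For (b), one needs a finer filtration than the $\F_p$-Adams filtration on $\Ss_{(p)}$ alone. The natural choice is the filtration on $\MOfn_{(p)}$ induced by the Whitehead-cover tower $\tau_{\geq k}\mathrm{BO}$ for $k \ge 4m$: each successive cover contributes an associated-graded piece which sits in higher Adams filtration, and the tower induces a spectral sequence converging to $\pi_*\MOfn_{(p)}$ whose input is Adams data for Thom spectra of simpler Postnikov layers. The bound $2N_p - 1$ then arises as the Adams filtration of the specific detecting class for $\widetilde{w}$ in this tower; this is the content of \cite[Definition~7.5]{Boundaries}, and the table entries record its value for each $m$. The main obstacle is tracking how differentials and hidden extensions in the Adams spectral sequence for $\mathrm{cofib}(u)$ interact with the Postnikov filtration: one must rule out any filtration drop for $w$, which comes down to Steenrod-algebra computations on the low-degree cells of $\tau_{\geq 4m}\mathrm{BO}$ and carefully controlling the possible shortening of filtration along $\partial$.
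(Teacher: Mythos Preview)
The paper does not prove this proposition; it is quoted from \cite[Lemma 6.9 and Theorem 10.8]{Boundaries}. However, \Cref{sec:inch} recalls enough of the argument that a comparison is possible, and your outline diverges from it in a substantive way.

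In the approach of \cite{Boundaries} as recalled here, the element $w$ is produced \emph{explicitly} as a Toda bracket. One takes an $(8m-1)$-skeleton $M$ of $\Sigma^\infty O\langle 4m-1\rangle$, the generator $x\in\pi_{4m-1}M$, and the $J$-map $M\to\Ss$; the diagram in \Cref{cnst:diagram} encodes a bracket of the shape $\langle J,\, xJ(x),\, 2\rangle$, and this class is shown (in \cite{Boundaries}) to generate the kernel modulo the image of $J$. The filtration bound is then obtained by passing to $\F_p$-synthetic spectra: one lifts $J$ through $\tau^{N_p}$ to a map $\widetilde{J}:\Sigma^{0,N_p}\nu M\to\Ss^{0,0}$, lifts the entire bracket diagram synthetically, and reads off the filtration from the bidegree of the resulting synthetic class. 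The integer $N_p$ is \emph{defined} in \cite[Definition~7.5]{Boundaries} precisely as this lifting parameter, so the bound $2N_p-1$ falls out of the construction by design.

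Your route through $\mathrm{cofib}(u)$ and its Adams spectral sequence is not unreasonable, but there is a genuine gap in part (b). You propose to extract the bound $2N_p-1$ from a Whitehead/Postnikov filtration on $\MOfn$ and then assert that ``this is the content of \cite[Definition~7.5]{Boundaries}.'' That is not what that definition says: $N_p$ is a synthetic quantity measuring $\tau$-divisibility of $\nu J$, not a Postnikov depth. Without an argument linking your tower to the synthetic lift of $J$, there is no reason the number you produce will agree with $2N_p-1$, and the connecting map $\partial$ can in general \emph{raise} Adams filtration in ways you have not controlled. Your part (a) also has a loose end: the phrase ``classes coming from the unit cell of $\MOfn$'' does not parse in $\mathrm{cofib}(u)$, since that cell is exactly what has been coned off; and showing ``a single remaining generator'' via an Adams $E_2$-computation for $\mathrm{cofib}(u)$ would require a nontrivial range of Steenrod-module computations that you have not carried out. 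The paper's Toda-bracket formulation sidesteps both issues by building $w$ directly and carrying the filtration information along in the synthetic category.
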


\begin{dfn} \label{dfn:h}
  Let $\Gamma_p(k)$ denote the minimal $m$ such that every $\alpha \in \pi_{k}\Ss_{(p)}$ with $\F_p$-Adams filtration strictly greater than $m$ is in the subgroup generated by the image of $J$ together with Adams's $\mu$-family (at the prime $2$).\footnote{However, note that the $\mu$-family elements only appear in stems congruent to $1$ and $2$ mod $8$, and in particular are absent from $\pi_{8m-1} \Ss_{(2)}$.}
\end{dfn}

In \Cref{tbl:remaining}, we have recorded known bounds on $\Gamma_2$ and $\Gamma_3$.
\begin{itemize}
  \item The sharp value of $\Gamma_2$ through $87$ and the bound in $95$ can be extracted from the extensive Adams spectral sequence computations of Isaksen, Wang and Xu \cite{IWX}.
  \item The bounds on $\Gamma_2$ above $95$ are obtained from \cite[Corollary 1.3]{DM3}.
  \item The sharp value of $\Gamma_3$ through $103$ can be extracted from Adams spectral sequence computations of Oka and Nakamura \cite{Oka, nakamura}. 
  \item The bounds on $\Gamma_3$ above $103$ are obtained from \cite{cookware}.
\end{itemize}

Comparing the values of $2N_p - 1$ and $\Gamma_p (8m-1)$ in \Cref{tbl:remaining} and using \Cref{thm:known}, we see that $w \in \pi_{8m-1} \Ss_{(p)}$ must lie in the image of $J$ with the possible exception of the following six cases:
\[(p,8m-1) = (2,23),(3,23),(2,31),(3,39),(2,47),(2,127).\]
In order to handle these cases, as well as the $n=9$ case of \Cref{thm:unit-ker}, we make four essentially different arguments. We outline each of these arguments here.

\begin{enumerate}
  \item {\bf Dimensions $23$ (prime $2$), $31, 39$ and $47$.}
    In \Cref{sec:inch}, we make a mild improvement on the lower bound for the $\F_p$-Adams filtration of the classes in the kernel of $u_{8m-1}$ established in \cite{Boundaries}. This improvement resolves the cases in dimensions $31$, $39$ and $47$. It further implies that, modulo the image of $J$, the only element which can lie in the kernel of the $2$-localized map $u_{23}$ is $\eta^3 \kappabar$. By \Cref{thm:counterex} and \cite[Satz 1.7]{StolzBook}, it follows that $\eta^3 \kappabar$ does indeed lie in the kernel.
  \item {\bf Dimension $17$, i.e. $n = 9$.}
    In \Cref{sec:2-9}, we analyze the exceptional situation in dimension $17$.
    We first obtain an upper bound on the kernel of $u_{17}$ by considering the composition
    \[\pi_{17} \Ss \to \pi_{17} \MO\langle 9 \rangle \to \pi_{17} \MO \langle 8 \rangle \to \pi_{17} \tmf,\]
    noting that the kernel is generated by $\eta \eta_4$ and the image of $J$.
    In order to show $\eta\eta_4$ lies in the kernel of $u_{17}$, we examine the homotopy power operation $P^9$.
    Using the Steenrod squares on the $\mathrm{E}_2$-page of the $\F_2$-Adams spectral sequence, we find that $P^9$ sends $\eta \sigma$ to an element of $\pi_{17} \Ss$ which does not lie in the image of $J$. Since $\eta \sigma$ is in the image of $J$, we learn that it maps to zero in $\pi_{8} \MO \langle 9 \rangle$ and therefore $P^9(\eta \sigma)$ must also go to zero in $\pi_{17} \MO \langle 9 \rangle$, which concludes the argument.
  \item {\bf Dimension $23$ (prime $3$).}
    In \Cref{sec:3-3}, we study the map of $\F_3$-Adams spectral sequences induced by the composition
    \[\MO\langle12\rangle \to \MO\langle 8 \rangle \to \tmf\]
    of the canonical map with the Ando--Hopkins--Rezk string orientation of $\tmf$ \cite{AHR}.
    The string orientation provides the leverage necessary to conclude that the kernel of the $3$-localized map $u_{23}$ is equal to the image of $J$.
This appears in \Cref{sec:3-3}.
  \item {\bf Dimension $127$.}
    In \Cref{sec:medium} we make a short homological argument which shows there are no elements of $\F_2$-Adams filtration $49$ in the $127$-stem. It follows that $w \in \pi_{127} \Ss_{(2)}$ must be of $\F_2$-Adams filtration at least $50$. Consulting \Cref{tbl:remaining}, we find that it must lie in the image of $J$.
\end{enumerate}

\section{A homotopy argument} \label{sec:inch}
In order to finish the proof of \Cref{thm:unit-ker} in dimensions $31, 39$ and $47$, as well as the prime $2$ case of dimension $23$, we will need to improve the $\F_p$-Adams filtration bound from \cite[Theorem 10.8]{Boundaries}. As in the proof of that bound, this proof relies on the existence and properties of the category of synthetic spectra. We suggest the reader consult \cite{Pstragowski} for a general introduction to synthetic spectra and \cite[Section 9]{Boundaries} or \cite{cookware} for a more computational viewpoint.\footnote{In this section all synthetic spectra will be $\F_p$-synthetic spectra.} Since the proof we give is essentially a refinement of the argument in \cite[Section 10]{Boundaries} we will assume the reader is generally familiar with that argument.

In \cite[Lemma 6.9]{Boundaries} a Toda bracket expression for an element $w$ which generates the kernel of the unit map
\[\pi_{8m-1} \Ss_{(p)} \to \pi_{8m-1} \MOfn_{(p)} \]
(modulo the image of $J$) is given. We begin by recalling this expression. 




\begin{dfn} \label{dfn:skeleton}
Let
$$M \longrightarrow \Sigma^{\infty} \Oo \mathrm{\langle 4m-1 \rangle}$$
denote the inclusion of an $(8m-1)$-skeleton of $\Sigma^{\infty} \Oo \mathrm{\langle 4m-1 \rangle}$. By the inclusion of an $(8m-1)$-skeleton, we mean in particular that the induced map
$$ (\F_p)_* (M) \longrightarrow (\F_p)_* (\Sigma^{\infty} \Oo \mathrm{\langle 4m-1 \rangle})$$
is an isomorphism for $* \leq 8m-1$ and that $(\F_p)_* (M) \cong 0$ for $* > 8m-1$.
The generator $x \in \pi_{4m-1} \left(\sOf \right) \cong \Z$ is the image of some class in $\pi_{4m-1} M$, which by abuse of notation we also denote by $x$.  We additionally abuse notation by using $J$ to denote the composite map $$M \longrightarrow \Sigma^{\infty} \Oo \mathrm{\langle 4m-1 \rangle} \stackrel{J}{\longrightarrow} \mathbb{S},$$
where
\[ \Sigma^{\infty} \Oo \mathrm{\langle 4m-1 \rangle} \stackrel{J}{\longrightarrow} \mathbb{S}\]
is adjoint to the map
\[ \Oo \mathrm{\langle 4m-1 \rangle} \to \Oo \xrightarrow{J} \GL_1 (\Ss). \]
\end{dfn}

\begin{cnstr} \label{cnst:diagram}
  Consider the following diagram,
  \begin{center}
    \begin{tikzcd}[column sep = huge]
      \mathbb{S}^{8n-2} \arrow{r}{2}
      \arrow[rrdd, bend right=20,""{name=D}] \arrow[rrdd, bend right=20,swap,"0"] \arrow[rr,bend left = 30,""{name=U},"0"]
      \arrow[rrdd, bend right=20,""{name=D}] \arrow[rrdd, bend right=20,swap,"0"] \arrow[rr,bend left = 30,""{name=U},"0"]
      & \mathbb{S}^{8n-2} \arrow[Leftrightarrow, from=D, "h"] \arrow[Leftrightarrow, from=U, "f"] \arrow{r}{xJ(x)} \arrow[rdd,""{name=MU},"J(x)^2" description] \arrow[rdd,swap, ""{name=MD},"J(x)^2" description]
      & M \arrow{dd}{J}  \arrow[bend left=20,Leftrightarrow, from=MU, swap, "g"] \\ \\
      & & \mathbb{S}.
    \end{tikzcd}
    \label{diagram1}
  \end{center}
  where the homotopies $f,g$ and $h$ are chosen as follows:
  \begin{itemize}
  \item $f$ is an arbitrary nullhomotopy.
  \item $g$ is the canonical homotopy associated to the fact that $J$ is a map of $\Ss$-modules.
  \item $h$ is the canonical nullhomotopy given by the $\mathbb{E}_\infty$-ring structure on $\Ss$.
  \end{itemize}
  This provides a homotopy from $0$ to itself which defines the map $w: \Ss^{8n-1} \to \Ss^0$, well defined modulo the image of $J$. By \cite[Lemma 6.9]{Boundaries}, $w$ generates the kernel of the unit map
  \[u_{8m-1} : \pi_{8m-1} \Ss \to \pi_{8m-1} \MOfn. \]
\end{cnstr}

The Adams filtration bound in \cite[Theorem 10.8]{Boundaries} arises via the construction of a synthetic lift of the diagram in \Cref{cnst:diagram}. Our improvement in Adams filtration will come from producing a slightly better synthetic lift. 

\begin{rec}
  
  As in \cite[Construction 10.5]{Boundaries}, once we move over to the category of synthetic spectra there is a lift of the map of synthetic spectra
  \[ \nu J : \nu M \to \Ss^{0,0} \]
  along the map $\tau^{N_p} : \Ss^{0,-N_p} \to \Ss^{0,0}$ to a map
  $ \nu M \to \Ss^{0,-N_p} $.
  which we view as a map
  \[ \wt{J} : \Sigma^{0,N_p} \nu M \to \Ss^{0,0}. \]
\end{rec}

The main result of this section is the following:

\begin{lem}
  Suppose that $\wt{J}(\nu(x)) = \tau^N z$. Then the Toda bracket $w$ has Adams filtration at least $2N_p + N - 1$.
\end{lem}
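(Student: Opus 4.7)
The strategy is to rerun the synthetic Toda bracket argument of \cite[Proof of Theorem 10.8]{Boundaries}, feeding in the extra $\tau$-divisibility supplied by the hypothesis in order to improve the filtration bound by exactly $N$. Since $\nu J = \tau^{N_p} \wt{J}$ by construction of $\wt J$, precomposing with $\nu(x)$ yields
\[\nu(J(x)) \;=\; \tau^{N_p}\cdot\wt{J}(\nu(x)) \;=\; \tau^{N_p+N}\, z,\]
so $J(x)\in\pi_{4m-1}\Ss$ has $\F_p$-Adams filtration at least $N_p + N$. Next, because $xJ(x)\in\pi_{8m-2}M$ is the $\Ss$-module product of $x$ and $J(x)$, and $\nu$ is lax monoidal,
\[\nu(xJ(x)) \;=\; \nu(x)\cdot\nu(J(x)) \;=\; \tau^{N_p+N}\bigl(\nu(x)\cdot z\bigr),\]
so $xJ(x)$ likewise has $\F_p$-Adams filtration at least $N_p + N$.

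Finally, $w$ is the three-fold Toda bracket $\langle 2,\, xJ(x),\, J\rangle$ assembled in \Cref{cnst:diagram} from the nullhomotopies $f$, $g$, $h$. Its constituent maps now have $\F_p$-Adams filtrations $0$, at least $N_p+N$, and at least $N_p$, respectively. Carrying these factorizations through the synthetic Toda bracket construction (the synthetic avatar of Moss's theorem for triple brackets) gives
\[\mathrm{fil}_{\F_p}(w) \;\geq\; 0 + (N_p+N) + N_p - 1 \;=\; 2N_p + N - 1,\]
which is the desired bound.

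The step requiring genuine care is transporting the three nullhomotopies $f$, $g$, $h$ to the synthetic setting in a way compatible with the refined $\tau$-factorization of $xJ(x)$. This is exactly the bookkeeping already executed in \cite[Proof of Theorem 10.8]{Boundaries}: the synthetic lifts of $f$, $g$, $h$ are constructed there using $\wt J$ itself, so replacing the middle map's filtration bound $N_p$ by $N_p+N$ does not disturb their compatibility --- the extra factor of $\tau^N$ is pulled out on the nose from the factorization $\wt{J}\circ\nu(x) = \tau^N z$. No new homotopy-theoretic input is needed beyond the hypothesis.
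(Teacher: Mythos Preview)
Your approach is essentially the same as the paper's: both rerun the synthetic construction of \cite[Proof of Theorem~10.8]{Boundaries}, raising the second bidegree of the source sphere from $a+2N_p$ to $b=a+2N_p+N$ by exploiting the factorization $\wt{J}(\nu(x))=\tau^N z$. The paper simply writes the lifted diagram out explicitly.

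One point you pass over deserves comment. The element $w$ is not a standard triple Toda bracket --- the homotopy $g$ identifies $J\circ xJ(x)$ with $J(x)^2$ rather than nullifying anything, and the $\mathbb{E}_\infty$ nullhomotopy $h$ kills $2J(x)^2$ precisely because $J(x)^2$ is a \emph{square}. When the source is raised to $\Ss^{a,b}$, the synthetic diagonal becomes $yz$ (with $y=\wt{J}(\nu(x))$), which is not itself a square; so one cannot literally transport $\tilde h$ by ``replacing the middle map's filtration bound.'' Your phrase ``the extra factor of $\tau^N$ is pulled out on the nose'' is exactly what is required here, but the paper spells it out: it inserts an additional homotopy $\tilde{k}:yz\simeq\tau^N\circ z^2$ coming from $y=\tau^N z$, so that the canonical $\mathbb{E}_\infty$ nullhomotopy $\tilde{h}$ is applied to the genuine square $z^2$ (landing in an intermediate $\Ss^{0,N}$ before $\tau^N$ carries it to $\Ss^{0,0}$). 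With $\tilde k$ in place the diagram glues as you describe, inverting $\tau$ recovers \Cref{cnst:diagram}, and the bound follows from \cite[Corollary~9.21]{Boundaries}.
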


\begin{proof}
  Let us fix the following notation: we set $y = \wt{J}(\nu(x))$, $a = 8n-2$ and $b = 8n-2 + 2N_p + N$.
  Then, we construct the following diagram in synthetic spectra:
  \begin{center}
    \begin{tikzcd}[column sep = huge, row sep = huge]
      \mathbb{S}^{a,b} \arrow{r}{2}
      \ar[rd, ""{name=D}]
      \ar[rd, swap, "0"]
      \ar[rr, bend left = 30, ""{name=U},"0"]
      \ar[rr, bend left = 30, ""{name=U},"0"]
      & \mathbb{S}^{a,b}
      \ar[d, "z^2"]
      \arrow[dr,""{name=MU},"yz" description]
      \ar[Leftrightarrow, from=D, "\tilde{h}"]
      \ar[Leftrightarrow, from=U, "\tilde{f}"]
      \ar{r}{\nu(x) z}
      & \Sigma^{0,N_p}\nu(M)
      \arrow{d}{\wt{J}}
      \arrow[swap, Leftrightarrow, from=MU,"\tilde{g}"] \\
      & \Ss^{0,N}
      \ar[r, "\tau^N"]
      \arrow[shorten <= 0.7em, swap,Leftrightarrow, from=MU, "\tilde{k}"]
      & \Ss^{0,0} 
    \end{tikzcd}
  \end{center}
  where the homotopies $\tilde{f},\tilde{g}$ and $\tilde{h}$ are chosen as follows:
  \begin{itemize}
  \item $\tilde{f}$ is an arbitrary nullhomotopy, which exists as a consequence of the fact that $\pi_{8n-2,8n-2+\gamma} (\nu M) = 0$ for all $\gamma \geq 2$ \cite[Proof of Proposition 10.7]{Boundaries}.
  \item $\tilde{g}$ is the canonical homotopy that expresses the fact that $\tilde{J}$ is a map of right $\Ss^{0,0}$-modules.
  \item $\tilde{h}$ is the canonical nullhomotopy that comes from the fact that $\Ss^{0,0}$ is an $\mathbb{E}_\infty$-ring in the symmetric moniodal $\infty$-category $\mathrm{Syn}_{\HFp}$.
  \item $\tilde{k}$ is the composite of a homotopy expressing that $z \tau^N = y$ and the natural homotopy expressing that composition with $z$ is homotopic to multiplication by $z$.
  \end{itemize}

  This diagram determines a homotopy of $0$ with itself, hence a map
  \[\wt{w}: \Ss^{a+1,b} \to \Ss^{0,0}.\]
  
  On applying $\tau^{-1}$, the above diagram recovers \Cref{cnst:diagram}, so $\wt{w}$ maps to $w$ under $\tau^{-1}$. The desired Adams filtration bound now follows from \cite[Corollary 9.21]{Boundaries}.
\end{proof}

Using this lemma, we are able to finish the proof of \Cref{thm:unit-ker} in the promised cases.

\begin{prop}
  When $(p,8m-1) = (2,31), (3,39)$ and $(2,47)$ the element $w$ lies in the image of $J$.
  On the other hand, when $(p,8m-1) =  (2,23)$, the image of the element $w$ in the cokernel of $J$ is $\eta^3\bar{\kappa}$.
\end{prop}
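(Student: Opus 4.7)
The plan is to apply the preceding lemma, which gives a lower bound of $2N_p + N - 1$ on the $\F_p$-Adams filtration of $w$, and compare with the upper bounds $\Gamma_p(8m-1)$ in \Cref{tbl:remaining}. In all four cases the gap between $2N_p - 1$ and $\Gamma_p(8m-1)$ is small (at most $4$), so only modest control on $N$ will be needed.

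The first step is to compute $N$ in each case by analyzing the synthetic class $\wt{J}(\nu(x))$. Since applying $\tau^{-1}$ to $\wt{J}$ recovers the ordinary map $J$, the class $\wt{J}(\nu(x))$ detects the image-of-$J$ element $J(x) \in \pi_{4m-1}\Ss_{(p)}$, and its $\tau$-divisibility is controlled by the $\F_p$-Adams filtration of $J(x)$. Classical $e$-invariant results on the image of $J$, combined with explicit Adams spectral sequence data for the $J$-spectrum, give the Adams filtration of $J(x)$ in each of the relevant stems $4m-1 \in \{11, 15, 19, 23\}$.

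For the three cases $(2,31)$, $(3,39)$, $(2,47)$, \Cref{tbl:remaining} shows $\Gamma_p(8m-1) = 2N_p - 1$, so it suffices to establish $N \geq 1$; this follows from the fact that $J(x)$ has $\F_p$-Adams filtration strictly larger than $N_p$ in each of these stems. By \Cref{dfn:h}, together with the absence of $\mu$-family elements in stems congruent to $7 \bmod 8$, the improved bound then forces $w$ into the image of $J$. For the remaining case $(p, 8m-1) = (2, 23)$, I expect the computation to give $N = 2$, so that the filtration bound reads $\mathrm{AF}(w) \geq 7$. By \Cref{thm:counterex}, $w$ is not in the image of $J$ at the prime $2$. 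Consulting the Adams chart for $\pi_{23}\Ss_{(2)}$ \cite{IWX}, the only class of $\F_2$-Adams filtration at least $7$ which is not in the image of $J$ is $\eta^3\kappabar$ (no $\mu$-family element is present in this stem), so $w \equiv \eta^3\kappabar \pmod{\im J}$, as claimed.

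The main obstacle is the computation of $N$ itself: this amounts to carefully tracking the Adams filtration of $J(x)$ through the lift $\wt{J}$ constructed in synthetic spectra. Once $N$ is in hand, the comparison with $\Gamma_p$ and, in the $(2,23)$ case, the combination with \Cref{thm:counterex} are essentially bookkeeping.
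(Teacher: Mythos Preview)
Your proposal is correct and takes essentially the same approach as the paper: compute $N$ in each case from the synthetic bidegree of $\wt{J}(\nu(x))$, apply the preceding lemma to obtain the bound $2N_p + N - 1$, compare with $\Gamma_p(8m-1)$, and for $(p,8m-1)=(2,23)$ combine with \Cref{thm:counterex} to identify $\eta^3\kappabar$. One small caution: the $\tau$-divisibility of $\wt{J}(\nu(x))$ is not determined by the Adams filtration of $J(x)$ alone---one must also verify that the relevant synthetic bidegree contains no $\tau$-torsion, which the paper checks explicitly in each case and which you rightly flag as the step requiring care.
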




\begin{proof}
  Throughout this proof, we will freely make use of \cite[Theorem 9.19]{Boundaries} in order to translate between knowledge of the $\F_p$-Adams spectral sequence and knowledge of $\F_p$-synthetic homotopy groups.
  
  {\bf Dimension 23, prime 2.} In this case, $J(x) = \zeta_{11}$ and we can determine that $\wt{J}(\nu(x)) = \tau^2 \wt{\zeta_{11}}$ because there is no $\tau$-torsion in this bidegree. Thus, $w$ has $\F_2$-Adams filtration at least 7. It follows from \Cref{thm:counterex} and \cite[Satz 1.7]{StolzBook} that the image of $w$ in the $2$-localized cokernel of $J$ must be nonzero. Using our restriction on its $\F_2$-Adams filtration, we conclude that $w$ must be equal to $\eta^3 \bar{\kappa}$ in the $2$-localized cokernel of $J$. 

  {\bf Dimension 31, prime 2.} In this case, $J(x) = \rho_{15}$ and we can determine that $\wt{J}(\nu(x)) = \tau \wt{\rho_{15}}$ because there is no $\tau$-torsion in this bidegree. Thus, $w$ has Adams filtration at least $6$ and so it must be in the image of $J$ (see \Cref{tbl:remaining}). 
  
  {\bf Dimension 39, prime 3.} In this case, $J(x) = \alpha_5$ and we can determine that $\wt{J}(\nu(x)) = \tau^2 \wt{\alpha_5}$ because there is no $\tau$-torsion in this bidegree. Thus, $w$ has Adams filtration at least $7$ and so it must be in the image of $J$ (see \Cref{tbl:remaining}). 

  {\bf Dimension 47, prime 2.} Once again, $\wt{J}(\nu(x))$ lands in a bidegree with no $\tau$-torsion where every element is divisible by $\tau^2$. Thus, $w$ has Adams filtration at least $15$ and so it must be in the image of $J$ (see \Cref{tbl:remaining}).
\end{proof}

\section{The case of dimension $17$} \label{sec:2-9}
The goal of this section is to prove the following theorem, which shows that the dimension $17$ is exceptional:

\begin{thm} \label{thm:2-9}
  The kernel of the unit map
  \[u_{17} : \pi_{17} \Ss \to \pi_{17} \MO\langle 9 \rangle\]
  is generated by the image of $J$ and $\eta \eta_4 \in \pi_{17} \Ss$.
\end{thm}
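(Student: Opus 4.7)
The plan is to prove the equality $\ker(u_{17}) = \langle \mathrm{im}(J)_{17}, \eta \eta_4 \rangle$ by establishing the two inclusions separately.

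For the upper bound $\ker(u_{17}) \subseteq \langle \mathrm{im}(J)_{17}, \eta \eta_4 \rangle$, I would factor the unit map through the Ando--Hopkins--Rezk string orientation:
\[
\pi_{17}\Ss \to \pi_{17}\MO\langle 9\rangle \to \pi_{17}\MO\langle 8\rangle = \pi_{17}\MStr \to \pi_{17}\tmf.
\]
The kernel of the composite unit $\pi_{17}\Ss \to \pi_{17}\tmf$ is known from Adams spectral sequence computations for $\tmf$ (e.g.\ via Bruner--Rognes, or from the already-invoked theorem that $\tmf$ surjects onto $\MF_*$ in the relevant range) to be generated exactly by the image of $J$ in stem $17$ together with $\eta \eta_4$. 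Since the first map factors through this kernel, this gives the desired containment.

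For the other inclusion, the image of $J$ lies in $\ker(u_{17})$ automatically, so it suffices to exhibit $\eta \eta_4$ as a unit-map boundary. Here I would exploit the $\mathbb{E}_\infty$-ring structure on $\MO\langle 9\rangle$ and the naturality of the Dyer--Lashof/power operation $P^9$ on homotopy. The element $\eta \sigma \in \pi_8 \Ss$ lies in the image of $J$, hence maps to zero in $\pi_8 \MO\langle 9\rangle$; by $\mathbb{E}_\infty$-naturality, $P^9(\eta\sigma)$ therefore vanishes in $\pi_{17}\MO\langle 9\rangle$. The remaining task is the identification $P^9(\eta\sigma) \equiv \eta \eta_4 \pmod{\mathrm{im}(J)_{17}}$ in $\pi_{17}\Ss$. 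I would carry this out on the $\F_2$-Adams $E_2$-page, where $\eta \sigma$ is detected by $h_1 h_3$: applying Bruner's formulas for the action of Steenrod operations on $\Ext_{\mathcal{A}}^{\ast,\ast}(\F_2,\F_2)$ to $h_1 h_3$ produces a class in the appropriate $(t-s,s)$ bidegree detecting $\eta \eta_4$. Combined with a check that the remaining indeterminacy of $P^9$ is absorbed by the image of $J$, this shows $\eta \eta_4 \in \ker(u_{17})$.

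The main obstacle will be the power operation computation: one must (i) produce $P^9$ as an honest natural operation on $\mathbb{E}_\infty$-ring homotopy that is compatible with the Steenrod action on $\Ext_{\mathcal{A}}^{\ast,\ast}$ via Bruner--May--Nishida, (ii) compute the resulting class on $E_2$ and verify it is a permanent cycle detecting precisely $\eta \eta_4$ rather than some other class in the same bidegree, and (iii) control the indeterminacy of the operation so that the ambiguity lies in the image of $J$. The upper-bound step is by contrast essentially bookkeeping, given the Ando--Hopkins--Rezk theorem and known $\tmf$ computations, so the entire argument hinges on step (ii), which is the genuinely new ingredient in this dimension.
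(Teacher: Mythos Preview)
Your proposal is correct and follows essentially the same approach as the paper: the upper bound via the composite $\Ss \to \MO\langle 9\rangle \to \MStr \to \tmf$ and the lower bound via the power operation $P^9$ applied to $\eta\sigma$, computed on the Adams $\mathrm{E}_2$-page through $\Sq^9(h_1h_3)=h_1^2h_4$. One small sharpening the paper makes is that the indeterminacy $\eta(\eta\sigma)^2$ actually vanishes, so $P^9(\eta\sigma)$ is well-defined on the nose; and rather than identifying $P^9(\eta\sigma)$ with $\eta\eta_4$ directly, the paper simply observes that nothing detected by $h_1^2h_4$ lies in the image of $J$, which together with the two-option upper bound already forces the kernel to be the larger subgroup.
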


We begin the proof of \Cref{thm:2-9} with the following bounds on the size of the kernel of $u_{17}$:

\begin{lem}
  The kernel of the unit map
  \[\pi_{17} \Ss \to \pi_{17} \MO \langle 9 \rangle \]
  is either equal to the image of $J$ or the subgroup of $\pi_{17} \Ss$ generated by the image of $J$ and $\eta \eta_4$.
\end{lem}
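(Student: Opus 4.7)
The plan is to establish the stated upper bound by factoring the unit map through $\tmf$ via the Ando--Hopkins--Rezk string orientation. The canonical Postnikov truncation map $\tau_{\geq 9}\BO \to \tau_{\geq 8}\BO = \BStr$ Thomifies to $\MO\langle 9\rangle \to \MO\langle 8\rangle = \MStr$, and composing with AHR yields a factorization
\[ \pi_{17}\Ss \to \pi_{17}\MO\langle 9 \rangle \to \pi_{17}\MO\langle 8 \rangle \to \pi_{17}\tmf. \]
Since the kernel of $u_{17}$ is contained in the kernel of this composite, it suffices to prove that the latter is generated by the image of $J$ together with $\eta\eta_4$.

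At every odd prime $p$, the group $\pi_{17}\Ss_{(p)}$ is already exhausted by the image of $J$, and the image of $J$ lies in the kernel of $u_{17}$ by a standard Thom-spectrum argument (a class in $\pi_{17}\Oo$ in this range lifts through $\Oo\langle 9\rangle$, so its $J$-image is null in $\MO\langle 9\rangle$). Hence the entire problem reduces to the prime $2$. I would next enumerate the generators of $\pi_{17}\Ss_{(2)}$, which are well-known from Toda's tables and sharpened by the Adams spectral sequence computations of Isaksen--Wang--Xu, and inspect the image of each generator in $\pi_{17}\tmf_{(2)}$ under the Hurewicz map.

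The key identifications to verify are that $\eta\eta_4$ maps to zero in $\pi_{17}\tmf$, while every generator of $\pi_{17}\Ss_{(2)}$ that lies outside the subgroup generated by the image of $J$ and $\eta\eta_4$ maps to a nonzero class in $\pi_{17}\tmf$ that is linearly independent from the image of the image of $J$. This uses the standard $2$-local description of $\pi_\ast\tmf$ and its Hurewicz image (as worked out by Hopkins--Mahowald, Bruner--Rognes, and others).

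The main obstacle is precisely this Hurewicz-image bookkeeping in the $17$-stem; it is a finite but delicate computation, and one must in particular rule out any lift of a non-$\eta\eta_4$ class to the kernel. Once this is in hand the upper bound follows immediately, and the matching lower bound (the fact that $\eta\eta_4$ does lie in the kernel of $u_{17}$) will be supplied by the power operation argument sketched in Section~\ref{sec:homotopy} and carried out in the remainder of Section~\ref{sec:2-9}.
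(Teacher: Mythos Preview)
Your proposal is correct and follows essentially the same approach as the paper: both factor the unit through $\tmf$ via the composite $\MO\langle 9\rangle \to \MO\langle 8\rangle \xrightarrow{\text{AHR}} \tmf$, note that the image of $J$ lies in the kernel, and then identify the kernel of $\pi_{17}\Ss \to \pi_{17}\tmf$ as the subgroup generated by the image of $J$ and $\eta\eta_4$. The paper simply cites Bauer's $\tmf$ computations for this last step, whereas you spell out the odd-prime reduction and the $2$-local Hurewicz bookkeeping more explicitly.
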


\begin{proof}
  It is well known that in degrees $8$ and above the image of $J$ is in the kernel of the unit map for $\MO \langle 9 \rangle $.
  Composing the Ando-Hopkins-Rezk string orientation \cite{AHR} with the canonical map $\MO \langle 9 \rangle \to \MO \langle 8 \rangle$, we obtain an $\mathbb{E}_\infty$-ring map $\MO \langle 9 \rangle \to \tmf$.
  One may read off from the computations of \cite{Bauertmf} that the kernel of
  \[\pi_{17} \Ss \to \pi_{17} \MO \langle 9 \rangle \to \pi_{17} \tmf\]
  is generated by the image of $J$ and $\eta \eta_4$, from which the proposition follows.
\end{proof}

It now suffices to show that the kernel of the unit map
\[\pi_{*} \Ss \to \pi_{*} \MO \langle 9 \rangle\]
contains an element not in the image of $J$ in dimension 17.
To prove this, we will use the fact that this kernel is closed under spherical power operations.
The power operation of interest to us is described in the following proposition.

\begin{prop}[{\cite[Table V.1.3]{BMMS}}] \label{prop:power-op}
  Let $R$ be an $\mathbb{E}_\infty$-ring. There is a natural, not necessarily additive, operation $P^{9}$ from $\pi_{8} R$ to $\pi_{17} R$, with indeterminacy. The indeterminacy of $P^9 (x)$ is $\eta x^2$.

  Moreover, if $x \in \pi_8 R$ is detected by $a \in \mathrm{E}_2 ^{s,8+s}$ on the $\mathrm{E}_2$-page of the $\HFt$-Adams spectral sequence, then $P^9 x$ lies in $\HFt$-Adams filtration at least $2s-1$ and its image in $\mathrm{E}_2 ^{2s-1,17+2s-1}$ is $\Sq^{9} a$.
\end{prop}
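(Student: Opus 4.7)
The plan is a construction via the $\E_\infty$-extended power, followed by a comparison with the Adams spectral sequence. Given a class $x : S^8 \to R$, I would form the composite
\[D_2(x) : D_2 S^8 \to D_2 R \xrightarrow{\xi_2} R,\]
where $\xi_2 : D_2 R \to R$ is part of the $\E_\infty$-structure on $R$. Since $8$ is even, the $\Sigma_2$-action on $S^8 \smsh S^8 = S^{16}$ is trivial, so $D_2 S^8 \simeq \Sigma^{16}(B\Sigma_2)_+$. The skeletal filtration of $\mathbb{RP}^\infty = B\Sigma_2$ presents this spectrum with a bottom cell $S^{16}$ (detecting $x^2$), a cell $S^{17}$ coming from the $1$-cell of $\mathbb{RP}^\infty$, and higher cells. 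Restricting $D_2(x)$ to this $17$-cell, through the attaching structure linking it to the bottom, defines $P^9(x) \in \pi_{17} R$; the indeterminacy $\eta \cdot x^2$ is precisely the contribution of the cellular attaching data in $\Sigma^{16}\mathbb{RP}^\infty_+$ relating the $17$-cell to the value $x^2$ on the bottom. Naturality in $R$ and non-additivity of $P^9$ both follow from standard naturality properties of $D_2$.

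For the $\mathrm{E}_2$-page identification, I would upgrade the extended-power construction to the $\HFt$-synthetic (or Adams-filtered) setting. If $x$ is detected by $a \in \mathrm{E}_2^{s, 8+s}$, then $x$ lifts through the $s$-th stage of the Adams tower of $R$, and applying the filtered version of $D_2$ produces a class landing naturally in Adams filtration $2s$ for the $S^{16}$-cell detecting $x^2$ and in Adams filtration $2s - 1$ for the adjacent $S^{17}$-cell detecting $P^9(x)$. Identifying the resulting class in $\mathrm{E}_2^{2s-1, 17+2s-1}$ with $\Sq^9 a$ then reduces to comparing two constructions of Steenrod operations on $\Ext^{*,*}_{\A}(\HFt, \HFt)$: the topological one, via the $\HFt$-cohomology of $D_2 S^n$ together with the Kudo--Araki/Nishida relations, and the algebraic one, via the Liulevicius--May cobar construction of $\Sq^i$ on $\Ext$. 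These two constructions are well-known to agree.

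The main obstacle is the filtration bookkeeping that justifies the one-step drop from filtration $2s$ (where $x^2$ lives) to filtration $2s - 1$ (where $\Sq^9 a$ lives), and confirms that the topological and algebraic Steenrod-operation constructions really do match in the relevant bidegree. This is a delicate feature of the way the cells of $D_2 S^n$ interact with the Adams tower, and is worked out in detail in the BMMS calculus \cite{BMMS}; for the purposes of this paper it suffices to invoke the resulting statement directly and apply it, as we do in the proof of \Cref{thm:2-9} via $P^9(\eta \sigma)$.
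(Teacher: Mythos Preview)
The paper does not prove this proposition at all: it is stated with the citation \cite[Table V.1.3]{BMMS} and immediately followed by the proof of \Cref{thm:2-9}, with no intervening argument. So there is no ``paper's own proof'' to compare against; the authors simply invoke the result from BMMS.

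Your sketch is a reasonable outline of the BMMS construction --- the extended power $D_2 S^8 \simeq \Sigma^{16}(B\Sigma_2)_+$, the identification of the indeterminacy via the attaching map, and the filtration-drop bookkeeping that produces $\Sq^9 a$ in bidegree $(2s-1, 17+2s-1)$. Your closing remark that ``for the purposes of this paper it suffices to invoke the resulting statement directly'' is exactly what the paper does. If anything, you have written more than the paper itself: the authors are content to treat this as a black box from \cite{BMMS}, and you would be entirely justified in doing the same.
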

%
%
%

\begin{proof}[Proof of \Cref{thm:2-9}]
  Applying this power operation to $\eta \sigma \in \pi_8 \Ss$, which lies in the image of $J$ and thus the kernel of
  \[ \pi_{8} \Ss \to \pi_{8} \MO \langle 9 \rangle, \]
  we learn that $P^9 (\eta \sigma)$ is in the kernel of the unit map for $\MO\langle 9 \rangle$. Since $\eta (\eta \sigma)^2 = 0$, the operation $P^9 (\eta \sigma) \in \pi_{17} \Ss$ has no indeterminacy.

  The class $P^9(\eta \sigma)$ is detected on the $\mathrm{E}_2$-page of the $\F_2$-Adams spectral sequence by $Sq^9 (h_1h_3)$. Using the Cartan formula and the fact that $\Sq^{2^i} (h_i) = h_{i+1}$, cf. \cite[Proposition 1.4(i)]{BMMS}, it follows that 
  \[\Sq^9 (h_1 h_3) = \Sq^1 (h_1) \Sq^8 (h_3) + \Sq^2 (h_1) \Sq^7 (h_3) = h_1 ^2 h_4 + h_2 h_3^2 = h_1^2 h_4. \]
  Since no element detected by $h_1^2h_4$ lies in the image of $J$, we are done.
\end{proof}

\section{The case of dimension $23$ at the prime $3$} \label{sec:3-3}
In this section, we will show that, although the kernel of the unit map $\pi_{23}\Ss \to \pi_{23}\mathrm{MO}\langle 12 \rangle$ contains an exceptional element at the prime 2, at the prime 3 the kernel contains only the image of $J$.
This is the final step in the proof of the dimension $23$ case of \Cref{thm:unit-ker}.

We will prove this by directly computing of the $\F_3$-Adams spectral sequence for $\mathrm{MO}\langle 12 \rangle$. As at the prime $2$, one of the key techniques in this argument is comparison with $\mathrm{tmf}$ via the Ando--Hopkins--Rezk string orientation \cite{AHR}. The first step we take is to compute the homology of $\MOtw$ as an $\mathcal{A}_*$-comodule in a range.

As is common in odd primary Adams spectral sequence computations, everything will be implicitly $3$-completed and, we will make use of the $\dot{=}$ notation, which means that an equation holds up to a multiplication by a $3$-adic unit. Similarly, since we do not keep track of constants all claims in this section should be regarded as true up to multiplication by a $3$-adic unit.

\begin{lem}
  \label{lemm:mo12-homology}
  In degrees $\leq 25$, the $\F_3$-homology of $\mathrm{MO}\langle 12 \rangle$ has the following properties:
  \begin{enumerate}
  \item It's isomorphic to $\F_3 \oplus (\F_3)_*(\Sigma^{12} ko) \oplus \Sigma^{24}\F_3 $ as an $\mathcal{A}_*$-comodule.
  \item The only nontrivial product is the square of the generator in degree $12$, which is equal to a generator of the third summand.
  \item On $\F_3$-homology, the composition of the canonical map with the string orientation
    \[\MO\langle 12 \rangle \to \MO \langle 8 \rangle \to \tmf\]
    is only nonzero on the unit.
  \end{enumerate}
\end{lem}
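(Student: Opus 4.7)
The plan is to (i) identify $(\F_3)_*(\mathrm{BO}\langle 12\rangle)$ as an $\A_*$-comodule in degrees $\leq 25$, (ii) transport the answer to $(\F_3)_*\MOtw$ via the Thom isomorphism, and (iii) analyze the map to $\tmf$ using the structure of the string orientation.

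For (i), I would use that $\mathrm{ko}_{(3)} \simeq \ell$, the Adams summand, so $\pi_*\mathrm{BO}\langle 12\rangle_{(3)}$ is concentrated in degrees divisible by $4$, with value $\Z_{(3)}$ in each of the degrees $12, 16, 20, 24$ in our range. Running the Serre spectral sequence for the Postnikov tower starting from $K(\Z_{(3)}, 12)$ should show that $H^*(\mathrm{BO}\langle 12\rangle; \F_3)$ in degrees $\leq 25$ is spanned by $1, \iota_{12}, P^1\iota_{12}, P^2\iota_{12}, P^3\iota_{12}$, and $\iota_{12}^2$; higher Postnikov stages contribute no new classes (the fundamental classes of $K(\Z_{(3)}, 16), K(\Z_{(3)}, 20), K(\Z_{(3)}, 24)$ are identified with the $P^i\iota_{12}$ via the $k$-invariants), and the Bockstein classes $\beta P^i\iota_{12}$ vanish because the integral cohomology is $3$-torsion-free in these degrees. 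Dualizing would then identify this with $\F_3 \oplus (\F_3)_*(\Sigma^{12}\ell) \oplus \Sigma^{24}\F_3 \cong \F_3 \oplus (\F_3)_*(\Sigma^{12}\mathrm{ko}) \oplus \Sigma^{24}\F_3$.

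Next, the Thom isomorphism would transport this decomposition to $\MOtw$; the Wu-type correction to the coaction involves only mod-$3$ reductions of low-degree Pontryagin classes, which all vanish on $\mathrm{BO}\langle 12\rangle$ in our range, so the decomposition persists. Part (2) would follow immediately from multiplicativity of the Thom isomorphism: $\iota_{12}^2$ generates the $\Sigma^{24}\F_3$ summand as the square of the degree-$12$ class.

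For (iii), I would analyze $\MOtw \to \MStr \to \tmf$ as a composition of $\mathbb{E}_\infty$-ring maps, so the unit is preserved. The non-unit generators in degrees $12, 16, 20, 24$ would be tracked using Ando--Hopkins--Rezk's description of the string orientation together with the known structure of $(\F_3)_*\tmf$. Concretely, one would show that the degree-$12$ generator of the $\Sigma^{12}\mathrm{ko}$ summand maps to zero in $(\F_3)_{12}\tmf$ by examining the image of $\pi_{12}\MOtw_{(3)} \to \pi_{12}\tmf_{(3)}$ under the Witten genus; $\A_*$-linearity then gives vanishing of the other generators in that summand, and the degree-$24$ class in $\Sigma^{24}\F_3$ vanishes as its square. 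I expect (iii) to be the main obstacle, as verifying the vanishing at the prime $3$ requires a detailed understanding of both $(\F_3)_*\tmf$ and the string orientation.
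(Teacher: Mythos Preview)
Your approach to (1) and (2) via the Postnikov tower and Wu-type formulas differs from the paper's, which instead uses the Goodwillie tower of $\Sigma^\infty\mathrm{BO}\langle 12\rangle$ (so $Q_1 \simeq \tau_{\geq 12}\mathrm{ko}$ and $D_2(\tau_{\geq 12}\mathrm{ko})$ supplies the extra class in degree $24$) and handles the Thom isomorphism by pulling the Thom class $u$ back from $\tmf$ along the string orientation. Your route can be made to work, but one step is glossed over: the Wu-type correction for $P^3(u)$ lives in degree $12$ and a priori involves $p_3$, which does \emph{not} vanish integrally on $\mathrm{BO}\langle 12\rangle$. It happens to vanish mod $3$ (since $p_3$ pairs to $\pm 240$ with a generator of $\pi_{12}\mathrm{BO}$), but this must be checked; ``low-degree Pontryagin classes vanish'' is not enough. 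The paper sidesteps this by noting that in $(\F_3)^*\tmf$ one has $P^3(u)\,\dot=\,P^1(b_4)$ with $|b_4|=8$, and $(\F_3)^8\MOtw = 0$.

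Your argument for (3) has a genuine gap. You propose to show that the degree-$12$ homology generator maps to zero in $(\F_3)_{12}\tmf$ by examining $\pi_{12}\MOtw_{(3)} \to \pi_{12}\tmf_{(3)}$, but knowing the map on homotopy does not determine the map on $\F_3$-homology; indeed the Hurewicz map $\pi_{12}\tmf_{(3)} \to (\F_3)_{12}\tmf$ is itself zero, as $c_6$ has positive Adams filtration. The paper's argument is direct and avoids this entirely: through degree $25$, $(\F_3)^*\tmf$ is generated as an $\mathcal{A}^*$-module by classes in degrees $0$ and $8$. Since $(\F_3)^8\MOtw = 0$, the degree-$8$ generator maps to zero, so the image of $(\F_3)^*\tmf \to (\F_3)^*\MOtw$ is the cyclic $\mathcal{A}^*$-submodule generated by the unit, which by part (1) is just $\F_3$ in degree $0$. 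Dualizing gives (3). You should replace the Witten-genus step with this.
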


\begin{proof}
  We begin by showing that, in degrees $\leq 25$, the Thom isomorphism
  \[ \F_3 \otimes \MOtw \simeq \F_3 \otimes \Sigma_+^\infty \mathrm{BO}\langle 12 \rangle, \]
  which is an equivalence of $\mathbb{E}_\infty$-rings, preserves the $\mathcal{A}_*$-comodule structure.
  To do this, we just need to show that the action of the Steenrod algebra on the Thom class $u \in (\F_3)^0 (\MOtw)$ is trivial through degree $25$.
  Since the Steenrod algebra is generated by $\beta, P^1$ and $P^3$ in this range, it suffices to show that the action of these operations on $u$ is zero.
  
  To show this, we note that $u$ is the pullback of another class $u \in (\F_3)^0 (\tmf)$ along the composition 
  $ \MOtw \to \MO\langle 8 \rangle \to \tmf $.
  From \cite[Section 4.1]{Culvertmf}, we can extract that through dimension $12$ the cohomology of $\tmf$ is given by $\F_3\{u\} \oplus \F_3\{b_4\} \oplus \F_3\{z\}$ where $|u| = 0$, $|b_4| = 8$ and $|z| = 12$ with Steenrod action $z \dot{=} P^1(b_4) \dot{=} P^3(u)$. For degree reasons $\beta(u) = P^1(u)= 0$ and $b_4$ maps to zero in $(\F_3)^8(\MOtw) = 0$, therefore the Steenrod operations $\beta, P^1$ and $P^3$ act trivially on $u$ in $\MOtw$.

  
  Using the Goodwillie tower of the identity for $\mathbb{E}_\infty$-rings, as worked out in \cite{KuhnTAQ}, we obtain a tower of nonunital $\mathbb{E}_\infty$-rings:
  \begin{center}
  \begin{tikzcd}[column sep = small]
    & & D_3(\tau_{\ge 12} ko) \ar[d] &
    D_2(\tau_{\ge 12} ko) \ar[d] &
    \tau_{\ge 12} ko \ar[d, "\simeq"] \\
    \Sigma^\infty \mathrm{BO}\langle 12 \rangle \ar[r] &
    \cdots \ar[r] &
    Q_3 \ar[r] &
    Q_2 \ar[r] &
    Q_1 .
  \end{tikzcd}
\end{center}
  For connectivity reasons, through degree $25$ we only need to work with $Q_2$.  
  Since $Q_1$ is the stabilization of $\Sigma^\infty \BO \langle 12 \rangle$, the product on $Q_1$ is zero.

  Note that
\[ (\F_3)_{*}(D_2(\tau_{\geq 12} ko)) \cong \begin{cases} 0 & * \leq 23 \text{ or } * = 25 \\ \F_3 & * = 24 \end{cases} \]
  and let $x$ denote a generator of $(\F_3)_{12}(\mathrm{BO}\langle 12 \rangle)$. In order to finish the proof of (1) we only need to show that the vertical map into $Q_2$ is injective on $\F_3$-homology in degree 24. In fact, this would follow from knowing that $x^2$ is nonzero, which itself would imply (2), given that we know the product on $Q_1$ is zero.

  The bottom Postnikov truncation $\mathrm{BO}\langle 12 \rangle \to K(\Z_{3}, 12)$ is compatible with the product structure, so it now suffices to show that $\iota^2$ is nonzero where $\iota$ is the canonical class in $(\F_3)_{12} (K(\Z_{3}, 12))$. In order to show this class is nonzero, we consider its coproduct
  $$ \triangle(\iota^2) = \iota^2 \otimes 1 + 2(\iota \otimes \iota) + 1 \otimes \iota^2, $$
  where the middle term is clearly nonzero.

  Now we turn to (3).
  Applying $\F_3$-cohomology to the map $\MO \langle 12 \rangle \to \tmf$, we obtain a map of $\mathcal{A}^*$-modules
  \[(\F_3)^* \tmf \to (\F_3)^* (\MOtw). \]
%

  Since $(\F_3)^*(\tmf)$ has a two stage filtration by cyclic $\mathcal{A}^*$-modules with generators in degrees $0$ and $8$, respectively \cite[Theorem 21.5(2)]{Rezktmf}, and $(\F_3)^*(\Sigma^{12}ko) \oplus \Sigma^{24}\F_3$ begins in degree 12, it follows that through degree $25$ the map
  \[(\F_3)^* \tmf \to (\F_3)^* (\MOtw)\]
  factors through the unit.
\end{proof}

As a consequence of this lemma, we learn that the $\mathrm{E}_2$-page of the $\F_3$-Adams spectral sequence for $\mathrm{MO}\langle 12 \rangle$ takes the form shown in \Cref{fig:mo12e2}. Next, we determine the easy differentials in this spectral sequence.

\begin{sseqdata}[ name = moASS, xscale=0.45, yscale=0.9, x range = {0}{24}, y range = {0}{7}, x tick step = 4, y tick step = 2, axes type = frame, class labels = {left}, classes = fill, grid = go, x grid step = 4, y grid step = 2, Adams grading]

  \class(0,0)
  \class["a_0" right](0,1) \structline
  \foreach \i in {2,...,8} { \class(0, \i) \structline }

  \class["h_0" right](3,1) \structline(0,0)
  \class["\alpha_2" above](7,2)
  \class["b_0" below](10,2) \class(10,3) \structline \structline(7,2)

  \class["h_1" below](11,1)
  \class(11,2) \structline \class(11,3) \structline

  \class(13,3) \structline(10,2)
  \class["\alpha_4" above](15,4)
  \class(17,4) \class(20,5) \structline \class(20,4) \structline \class(23,5) \structline
  \class(18,2) \class(21,3) \structline
  \class["\alpha_5"](19,5) \class(22,6) \structline \class(22,5) \structline

  \class["u"](23,3)
  \foreach \i in {1,...,3} { \class(23, 3+\i) \structline }

  \class[red, "x_{12}" above right](12,0)
  \foreach \i in {1,...,8} { \class[red](12, \i) \structline }
  \class[red, "x_{16}" right](16,1)
  \foreach \i in {1,...,8} { \class[red](16, 1+\i) \structline }
  \class[red, "x_{20}" below right](20,2)
  \foreach \i in {1,...,8} { \class[red](20, 2+\i) \structline }
  \class[red, "x_{24}" below left](24,3)
  \foreach \i in {1,...,8} { \class[red](24, 3+\i) \structline }

  \class[blue, "x_{12}^2" above left](24,0)
  \foreach \i in {1,...,8} { \class[blue](24, \i) \structline }

  \d[cyan]2(11,1)
  \d[cyan]2(18,2)
  \d[cyan]2(21,3)(20,5)
  \d[cyan]2(23,3)(22,5)
  \d[cyan]2(23,4)(22,6)
  \d[cyan]2(12,0)
  \d[cyan]2(12,1)      
  \d[red]3(16,1)
  \d[red]3(20,2)
  \d[cyan]2(24,3)(23,5,2)
  \d[cyan]2(24,4)
\end{sseqdata}

\begin{sseqdata}[ name = tmfASS, xscale=0.45, yscale=0.9, x range = {0}{24}, y range = {0}{7}, x tick step = 4, y tick step = 2, axes type = frame, class labels = {left}, classes = fill, grid = go, x grid step = 4, y grid step = 2, Adams grading]

  \class(0,0)
  \class["a_0" right](0,1) \structline
  \foreach \i in {2,...,10} { \class(0, \i) \structline }
  \class["h_0" right](3,1) \structline(0,0)

  \class["\alpha_2" above](7,2)
  \class(8,0)
  \foreach \i in {1,...,10} { \class(8, \i) \structline }

  \d[cyan]2(8,0)
  
  \class(10,2)
  \class(10,3) \structline \structline(7,2)
  \class(11,1) \structline(8,0)
  \class["c_6"](12,3)
  \foreach \i in {1,...,10} { \class(12, 3+\i) \structline }

  \d[cyan]2(11,1)
  
  \class(13,3) \structline(10,2)
  \class["\alpha_4" above](15,4) \structline(12,3)
  \class(16,2)
  \foreach \i in {1,...,10} { \class(16, 2+\i) \structline }

  \d[cyan]2(16,2)

  \class(17,4)
  \class(18,2)
  \class["\alpha_5" above](19,5)
  \class(20,4) 
  \class(20,5) \structline \structline(17,4)
  \class(20,3)
  \foreach \i in {1,...,10} { \class(20, 3+\i) \structline }
  
  \d[cyan]2(18,2)
  \d[cyan]2(20,3)
  
  \class(21,3) \structline(18,2)
  \class(22,5) 
  \class(22,6) \structline \structline(19,5)
  \class["u"](23,3)
  \class(23,4) \structline
  \class(23,5) \structline(20,4)
  \class["\Delta"](24,1)
  \foreach \i in {1,...,10} { \class(24, 1+\i) \structline }
  \class["c_6^2" above left](24,6)
  \foreach \i in {1,...,10} { \class(24, 6+\i) \structline }

  \d[cyan]2(21,3)(20,5)
  \d[cyan]2(23,3)(22,5)
  \d[cyan]2(23,4)(22,6)
  \d[green!70!black]4(24,1)
  
  \class(25,6) \structline(22,5)
  \class(26,4) \structline(23,3)
  \class(27,2) \structline(24,1)
  \class(27,7) \structline(24,6)
\end{sseqdata}

\begin{sseqdata}[ name = synS, xscale=0.45, yscale=0.9, x range = {0}{24}, y range = {0}{7}, x tick step = 4, y tick step = 2, axes type = frame, class labels = {left}, classes = fill, grid = go, x grid step = 4, y grid step = 2, Adams grading]
  
  \class(0,0)
  \foreach \i in {1,...,8} { \class(0, \i) \structline }

  \class["\alpha_1" above](3,1) \structline(0,0)
  \class["\alpha_2" above](7,2)
  \class["\beta_1" below](10,2) \class[red](10,3) \structline \structline(7,2)

  \class["\alpha_{3/2}" below right](11,2) 
  \class(11,3) \structline

  \class(13,3) \structline(10,2)
  \class["\alpha_4" above](15,4)
  \class[red](17,4)
  \class[red](20,5) \structline
  \class(20,4) \structline
  \class(23,5) \structline
  \class["\alpha_5" above](19,5)
  \class[red](22,6) \structline
  \class[red](22,5) \structline

  \class(23,5)
  \class(23,6) \structline

  \class(25,6) \structline(22,5)

\end{sseqdata}

\begin{figure}
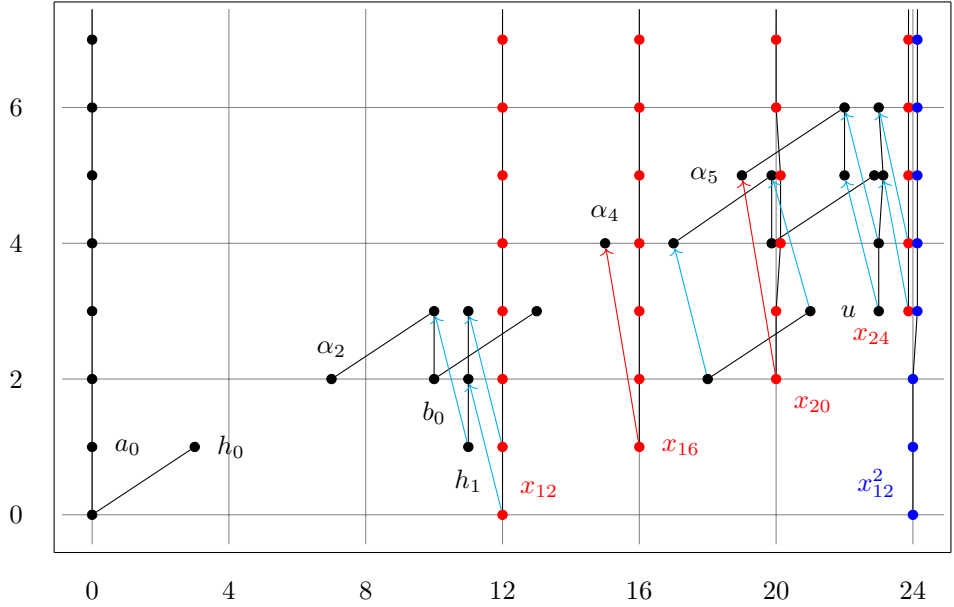

  \centering
  The $\F_3$-Adams spectral sequence for $\MOtw$ \\\vspace{6pt}
  \printpage[ name = moASS, page = 0 ]
  \caption{The $\F_3$-Adams spectral sequence for $\MOtw$. Classes in black come from the unit copy of $\F_3$. Classes in red come from the copy of $(\F_3)_*(ko)$ in degree $12$. Classes in blue come from the copy of $\F_3$ in degree 24. The nontrivial product in homology from \Cref{lemm:mo12-homology} lets us conclude that $x_{12}^2$ is as labeled.}
  \label{fig:mo12e2}
  \label{fig:mo12}
\end{figure}

\begin{lem} \label{lem:easy-diff}
  Through degree $24$, the differentials in the $\F_3$-Adams spectral sequence for $\mathrm{MO}\langle 12 \rangle$ fit into one of the following two families:
  \begin{enumerate}
  \item differentials induced from the sphere, all of which occur,
  \item the following extra differentials:
    \begin{alignat*}{3}
      d_2(x_{12}) & \dot{=} a_0h_1 &\qquad\qquad d_3(x_{16}) & \dot{=} \alpha_4 &\qquad\qquad d_3(x_{20}) & \dot{=} \alpha_5 \\
      d_2(x_{24}) & \dot{=} a_0^2u & d_5(x_{12}^2) & \dot{=} ?
    \end{alignat*}
  \end{enumerate}
  which are displayed in \Cref{fig:mo12}.
\end{lem}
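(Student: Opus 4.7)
The strategy is to split the proof into sphere-induced differentials and the five exceptional ones, treating each using a different comparison map or structural observation. For family (1), the unit map $\Ss \to \MOtw$ induces a map of $\F_3$-Adams spectral sequences whose image on the $E_2$-page is precisely the ``black'' unit summand visible in \Cref{fig:mo12}. Every sphere-level differential in this range, available from the computations of Oka and Nakamura, transports to a black differential in $\MOtw$, and $a_0$-linearity of Adams differentials simultaneously handles the $a_0$-multiples of the exceptional differentials in family (2).

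For the targets $\alpha_4$ and $\alpha_5$ in family (2), the approach is target-forcing. The classes $\alpha_4 \in \pi_{15} \Ss_{(3)}$ and $\alpha_5 \in \pi_{19} \Ss_{(3)}$ belong to the image of the $J$-homomorphism, which by the analyses of \Cref{sec:homotopy} and \Cref{sec:inch} already lies in the kernel of $\pi_* \Ss \to \pi_* \MOtw$. Therefore the $E_2$-representatives of $\alpha_4$ and $\alpha_5$ must be hit by differentials on the $\MOtw$ side. Inspection of the $E_2$-page shows that in the relevant stems the only available sources in lower Adams filtration are $x_{16}$ and $x_{20}$, and the filtration jump forces the differentials to be $d_3$'s, giving $d_3(x_{16}) \dot{=} \alpha_4$ and $d_3(x_{20}) \dot{=} \alpha_5$.

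The remaining differentials on $x_{12}$, $x_{24}$, and $x_{12}^2$ come from the cell structure of $\MOtw$ together with multiplicative propagation. The $d_2(x_{12}) \dot{=} a_0 h_1$ differential reflects the first nontrivial Postnikov $k$-invariant $\beta P^1 \iota_{12}$ of $BO\langle 12 \rangle$ at $p = 3$: its Thomification attaches the $12$-cell onto a $3$-multiple of the class detected by $h_1$. By the Leibniz rule, $d_2(x_{12}^2) = 2 x_{12} \cdot d_2(x_{12}) = 2 a_0 h_1 x_{12}$, which vanishes on $E_2$ for degree reasons (the bidegree $(23, 2)$ is vacant), so $x_{12}^2$ survives to $E_3$; the comparison with $\tmf$ via \Cref{lemm:mo12-homology}(3) then forces some later differential on it, explaining the ``$?$'' in the statement. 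The analogous structural analysis applied to $x_{24}$, viewed as $v_1^3 x_{12}$ in the $ko$-summand, propagates $d_2(x_{12})$ to the stated $d_2(x_{24}) \dot{=} a_0^2 u$.

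The main obstacle is the sharp identification of the target of $d_2(x_{12})$: several classes live in the correct bidegree $(11, 2)$, and singling out $a_0 h_1$ requires careful analysis of the Thomified $k$-invariants of $BO\langle 12 \rangle$ at $p = 3$, together with consistency checks against the $\tmf$-Adams spectral sequence of Bauer. Once this anchor differential is established, the remaining family-(2) differentials follow by the propagation arguments above.
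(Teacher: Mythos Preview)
Your treatment of the sphere-induced differentials and of $d_3(x_{16})$, $d_3(x_{20})$ via the image of $J$ is fine and matches the paper. However, you have misidentified where the real work lies. The differential $d_2(x_{12}) \dot{=} a_0 h_1$ is \emph{not} the main obstacle: the paper dispatches it in one line by exactly the same image-of-$J$ argument you used for $\alpha_4$ and $\alpha_5$, now applied to $\alpha_{3/2} \in \pi_{11}\Ss_{(3)}$. There is a unique class at $(11,2)$, namely $a_0 h_1$, so no ``sharp identification'' or $k$-invariant analysis is needed.

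The genuine difficulty, which occupies essentially the entire proof in the paper, is $d_2(x_{24}) \dot{=} a_0^2 u$, and your proposed argument for it does not work. There is no $v_1$-multiplication on $\MOtw$ or on its Adams $E_2$-page: the decomposition of \Cref{lemm:mo12-homology} is only at the level of $\mathcal{A}_*$-comodules, not a splitting of ring spectra or of modules over anything carrying a $v_1$-action, so writing $x_{24} = v_1^3 x_{12}$ and invoking Leibniz is not legitimate. The target $a_0 h_1$ lives in the unit summand, which has no $v_1$ acting on it at all. The paper instead argues that $\alpha_{6/2} \in \langle \alpha_5, \alpha_1, 3\rangle$ is detected by $a_0^2 u$; since $\alpha_5$ dies in $\MOtw$, the bracket forces $\alpha_{6/2}$ to become $3$-divisible and hence zero, so $a_0^2 u$ must be a boundary. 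Pinning down that the relevant differential has length exactly $2$ (rather than longer) then requires a synthetic lift of this Toda bracket. Finally, the ``?'' on $d_5(x_{12}^2)$ records that the lemma leaves this differential undetermined; nothing in \Cref{lemm:mo12-homology}(3) forces a differential on $x_{12}^2$, and indeed the subsequent proposition shows it vanishes.
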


After proving this lemma, the final task of this section will be to show that $x_{12}^2$ is a permanent cycle.

\begin{proof}
  First, we note that for degree reasons nothing can interfere with the differentials induced from the sphere. We also know that the image of $J$ elements $\alpha_{3/2}, \alpha_4$ and $\alpha_5$ must each map to zero in $\MOtw$ as well. For each of these there is a unique possible differential which could enforce that relation. It remains to show that $d_2(x_{24}) \dot{=} a_0^2u$.

  In the sphere we can use Moss's theorem \cite{Moss} to conclude that $\langle \alpha_5, \alpha_1 , 3 \rangle$ is detected by $a_0^2u$ in the $\F_3$-Adams spectral sequence. Since the indeterminacy is just 3 times this same element we find that this bracket is equal to $\alpha_{6/2}$ up to a unit. The class $\alpha_5$ maps to zero in $\MOtw$ therefore $\alpha_{6/2}$ becomes divisible by 3. Examining \Cref{fig:mo12}, this can only happen if $\alpha_{6/2}$ is zero. Thus, we know it gets hit by some differential.
  We will finish the proof by using synthetic spectra to bound the length of the Adams differential that hits $a_0^2u$.

The first step is to lift the Toda bracket above to one in the synthetic category.
Using \cite[Theorem 9.19]{Boundaries}, we may compute the $\F_3$-synthetic homotopy of the sphere through $24$ using the known computation of its $\F_3$-Adams spectral sequence in this range.
The result is displayed in \Cref{fig:synS}.
We next fix some names for specific elements of $\pi_{*,*}\Ss$. 
\begin{itemize}
\item Let $\wt{3} \in \pi_{0,1}\Ss$ denote the unique element in that degree which
  maps to $3$ in $\pi_0\Ss$ and $a_0$ in $\pi_{0,1}(C\tau) \cong \Ext_{\mathcal{A}}^{1,4}(\F_3,\F_3)$.
\item Let $\alpha_1 \in \pi_{3,4}\Ss$ denote the unique element in that degree which
  maps to $\alpha_1$ in $\pi_3\Ss$ and $h_0$ in $\pi_{3,4}(C\tau) \cong \Ext_{\mathcal{A}}^{1,4}(\F_3,\F_3)$.
\item Let $\alpha_5 \in \pi_{19,24}\Ss$ denote the unique element in that degree which maps to $\alpha_5$ in $\pi_3\Ss$ and a generator of $\pi_{19,24}(C\tau)$.
\end{itemize}

Since the image of $\alpha_1 \alpha_5$ in $\pi_{22,28}(C\tau)$ is hit by a $d_2$ differential and there are no classes above it we learn that $\tau \alpha_1 \alpha_5 = 0$.
Since $\pi_{3,5}\Ss = 0$ we learn that $\wt{3} \alpha_1 = 0$.
This means we can form the Toda bracket $x := \langle \tau \alpha_5, \alpha_1 , \wt{3} \rangle$ in synthetic spectra.
Upon inverting $\tau$, the bracket $x$ goes to the bracket $\langle \alpha_5, \alpha_1, 3 \rangle$, which is detected by $a_0 ^2 u$ on the $\mathrm{E}_2$-page.
We may therefore conclude that $x$ maps to $a_0^2u$ in the homotopy of $C\tau$.
If we show that the image of $x$ in $\nu \MOtw$ is simple $\tau$-torsion, then this will imply that $a_0^2u$ is hit by a $d_2$ differential in the $\F_3$-Adams spectral sequence for $\MOtw$.

Toda brackets are preserved by $\mathbb{E}_1$-ring maps, and $\Ss \to \nu \MOtw$ is a map of $\mathbb{E}_\infty$-rings, therefore we may make the following manipulations of Toda brackets (now considered in $\nu\MOtw$):
\[ \tau x \in \tau \langle \tau \alpha_5, \alpha, \wt{3} \rangle \subset \langle \tau^2 \alpha_5, \alpha, \wt{3} \rangle = \langle 0, \alpha, \wt{3} \rangle = \wt{3} \pi_{23, 26}(\MOtw), \]
  where the fact that $\tau^2\alpha_5 = 0$ follows from it getting hit by the $d_3$-differential off of $x_{20}$. Finally, since there is no possible nonzero $\wt{3}$-division for $\tau x$, we conclude that it is zero.

\end{proof}


\begin{figure}
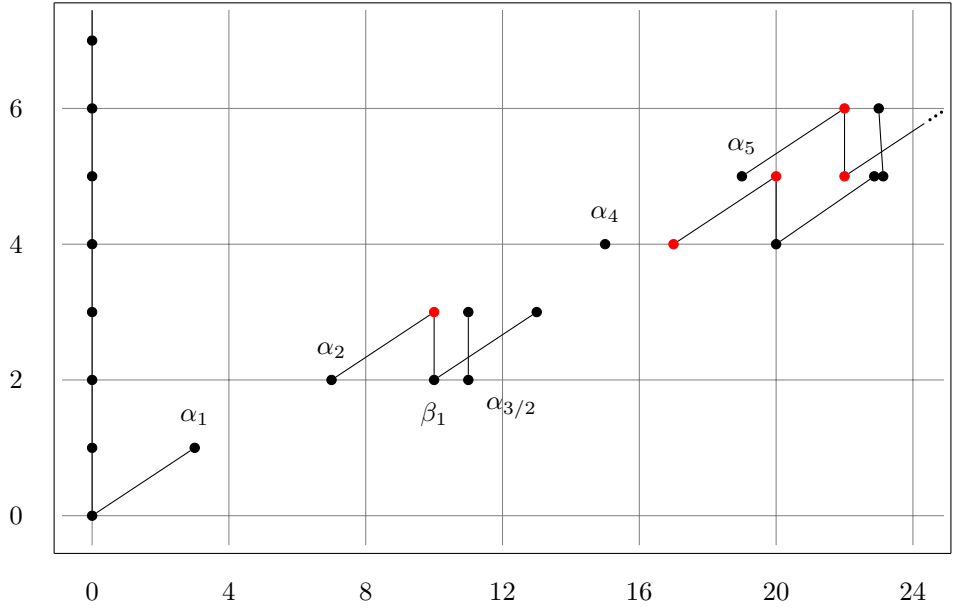

  \centering
  The bigraded homotopy groups of the $\F_3$-synthetic sphere. \\\vspace{6pt}
  \printpage[ name = synS, page = 0 ]
  \caption{The bigraded homotopy groups of the $\F_3$-synthetic sphere displayed in the $(t-s,s)$-plane.
    Black dots denote $\tau$-torsion-free classes,
    red dots denote $\tau^1$-torsion classes.
    Note that in order to reconstruct the group in a given bidegree one must examine all degrees lying above it.}
  \label{fig:synS}
\end{figure}

In dimension 12 we'll need slightly finer information than that provided by \cref{lem:easy-diff}.

\begin{lem}
  The element $c_6$ which generates $\pi_{12}\mathrm{tmf}_{3}$ lifts to $\pi_{12}\mathrm{MO}\langle 12 \rangle$, where it is detected (up to a unit) by $a_0^2x_{12}$ in the $\F_3$-Adams spectral sequence.
\end{lem}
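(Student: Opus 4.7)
The proof will proceed in three stages: (i) produce a class $y \in \pi_{12}\MOtw_{(3)}$ that is detected by $a_0^2 x_{12}$ at Adams filtration exactly $2$; (ii) use the string orientation to constrain $f(y) \in \pi_{12}\tmf_{(3)}$ to have Adams filtration at least $3$ and hence be an integer multiple of $c_6$; (iii) show that this multiple is a $3$-adic unit.

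For stage (i), I will verify that $a_0^2 x_{12} \in \mathrm{E}_2^{12,2}$ is a permanent cycle that is not the target of any differential. The only $d_2$ leaving $(12,2)$ is computed by the Leibniz rule from $d_2(x_{12}) \doteq a_0 h_1$ of \Cref{lem:easy-diff}: it equals $a_0^3 h_1$, which vanishes on $\mathrm{E}_2$ because the bidegree $(11,4)$ is empty in \Cref{fig:mo12} (the sphere tower at the $11$-stem stops at $a_0^2 h_1$). All potential higher differentials leaving $(12,2)$ land in empty bidegrees, and for degree reasons no class on $\mathrm{E}_2$ can support a differential hitting $(12,2)$. Hence $\mathrm{E}_\infty^{12,2}(\MOtw) \cong \F_3\{a_0^2 x_{12}\}$ and there exists $y \in \pi_{12}\MOtw_{(3)}$ detected by this class. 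Moreover the full $a_0$-tower above $a_0^2 x_{12}$ survives, so $y$ generates a $\Z_{(3)}$-summand and in particular is not $3$-divisible.

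For stage (ii), consider the composition $f \colon \MOtw \to \MO\langle 8 \rangle \to \tmf$ given by the canonical map followed by the Ando--Hopkins--Rezk string orientation. By \Cref{lemm:mo12-homology}(3), $f_*$ vanishes on the $\Sigma^{12} ko$ summand of $\F_3$-homology, so the induced map on $\mathrm{E}_2$-pages sends $a_0^2 x_{12}$ to zero. Therefore $f(y)$ has Adams filtration at least $3$ in $\tmf$, and since $\pi_{12}\tmf_{(3)} \cong \Z_{(3)}\{c_6\}$ with $c_6$ detected at filtration $3$, we have $f(y) = m \cdot c_6$ for a uniquely determined $m \in \Z_{(3)}$.

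Stage (iii), showing $m$ is a unit, is the main obstacle. My plan is to combine Hopkins--Mahowald with an obstruction-theoretic lifting argument: by \cite[Proposition 4.6]{HopkinsICM}, $2 E_6$ lies in the image of $\pi_{12}\tmf \to \mathrm{MF}_*$, so at the prime $3$ the Witten genus $\pi_{12}\MStr_{(3)} \to \pi_{12}\tmf_{(3)}$ is surjective. Because the fiber of the Whitehead-tower map $\BO\langle 12\rangle \to \BO\langle 8\rangle$ is $K(\Z, 11)$, a representative closed $5$-connected string $12$-manifold realising a generator of $\pi_{12}\tmf_{(3)}$ has a single cohomological obstruction in $H^{12}(M;\Z)$ to promoting its tangent structure to $\langle 12\rangle$; this obstruction may be annihilated by connected sum with a manifold of trivial Witten genus but compensating Pontryagin number. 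The resulting $\MOtw$-class is then a preimage of a generator of $\pi_{12}\tmf_{(3)}$, forcing $m$ to be a unit. Finally, since $\mathrm{E}_\infty^{12,2}(\MOtw)$ is one-dimensional, any such lift is detected (up to a $3$-adic unit) by $a_0^2 x_{12}$, completing the proof.
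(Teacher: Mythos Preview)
Your stages (i) and (ii) are correct: the Leibniz computation $d_2(a_0^2 x_{12}) = a_0^3 h_1 = 0$ and the vanishing argument from \Cref{lemm:mo12-homology}(3) are exactly right, and they pin the image $f(y)$ down to an integer multiple of $c_6$.

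The genuine gap is in stage (iii), where two of the stated facts are false. First, the fiber of $\BO\langle 12\rangle \to \BO\langle 8\rangle$ is \emph{not} $K(\Z,11)$: its nonzero homotopy groups are $\pi_7 = \Z$, $\pi_8 = \Z/2$, $\pi_9 = \Z/2$, so the obstructions to lifting a string structure lie in $H^8(M;\Z)$, $H^9(M;\Z/2)$, $H^{10}(M;\Z/2)$, not in $H^{12}$. Second, \cite[Proposition 4.6]{HopkinsICM} computes the image of $\pi_*\tmf \to \MF_*$, not of $\pi_*\MStr \to \pi_*\tmf$; knowing that $2E_6$ is hit by the former does not yield surjectivity of the latter. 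For that you would need the Hopkins--Mahowald surjectivity theorem (the folk result written up in \cite{Sanath}), which is a much heavier input than anything else in this section. Finally, the connected-sum step is neither justified nor, as it turns out, necessary: once one does surgery below the middle dimension to make a string representative $5$-connected, Poincar\'e duality kills $H^8$ through $H^{10}$ and the lift to $\langle 12\rangle$ is automatic. So your strategy is repairable, but only by importing substantial outside machinery and rewriting the obstruction argument.

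The paper's proof bypasses all of this with a Toda bracket entirely internal to the unit maps. One shows $\langle 9,\alpha_{3/2},\iota\rangle \doteq c_6$ in $\tmf$ via the Massey product $\langle a_0^2, a_0 h_1, \iota\rangle$ and a shuffle $h_0\langle a_0^2, a_0 h_1, \iota\rangle = \langle h_0, a_0^2, a_0 h_1\rangle\iota \doteq \alpha_4\iota \neq 0$. Since $\alpha_{3/2}$ lies in the image of $J$ and hence dies under the unit of $\MOtw$, the same bracket $\langle 9,\alpha_{3/2},\iota'\rangle$ is defined in $\MOtw$ and, by naturality under the string orientation, furnishes the required lift of $c_6$. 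The detection statement then follows exactly as you argued in stage (i). This argument needs nothing beyond the Adams charts already on the table, whereas your route (once corrected) trades that for surgery and the Hopkins--Mahowald surjectivity theorem.
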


\begin{proof}
  We begin by considering the following sequence of maps:
  $$ \Ss^{11} \xrightarrow{9} \Ss^{11} \xrightarrow{\alpha_{3/2}} \Ss^0 \xrightarrow{\iota} \tmf. $$
  Since each pairwise composite is nullhomotopic, we can form the Toda bracket $\langle 9, \alpha_{3/2}, \iota \rangle$. This Toda bracket has indeterminacy $9\pi_{12}(\tmf) + \pi_{12}(\Ss)\iota = (9\Z) \cdot c_6$. We will begin by showing that, up to a unit, $c_6$ is contained in this bracket.

  This bracket can be evaluated using the corresponding Massey product $\langle a_0^2, a_0h_1, \iota \rangle$. In particular, it will suffice to show that this Massey product is equal to $c_6$ as an element of the $\mathrm{E}_2$-page for $\tmf$. After consulting this $\mathrm{E}_2$-term we note that $c_6$ is the only element in its bidegree, so it will suffice to simply show that the bracket is nontrivial. In order to do this, we shuffle the bracket with $h_0$:
  $$ h_0 \langle a_0^2, a_0h_1, \iota \rangle = \langle h_0, a_0^2, a_0h_1 \rangle \iota \dot{=} \alpha_4 \iota. $$
  Finally, we note that the image of $\alpha_4$ in the $\mathrm{E}_2$-page for $\tmf$ is nontrivial \cite{Culvertmf}.
  From this we can read off that $\langle 9, \alpha_{3/2}, \iota \rangle = u c_6 $ for some $u \in \ZZ_{3}^\times$. The claim that $c_6$ lifts to $\MOtw$ now follows from the fact that the bracket $ \langle 9, \alpha_{3/2}, \iota' \rangle $ is defined where $\iota'$ is the unit of $\MOtw$. 
  Since $c_6$ is a generator of $\pi_{12} \tmf_{3}$, the only possibility for an $\F_3$-Adams representative of its lift to $\pi_{12} \MOtw$ is (up to a unit) $a_0 ^2 x_{12}$.
%
\end{proof}

\begin{figure}
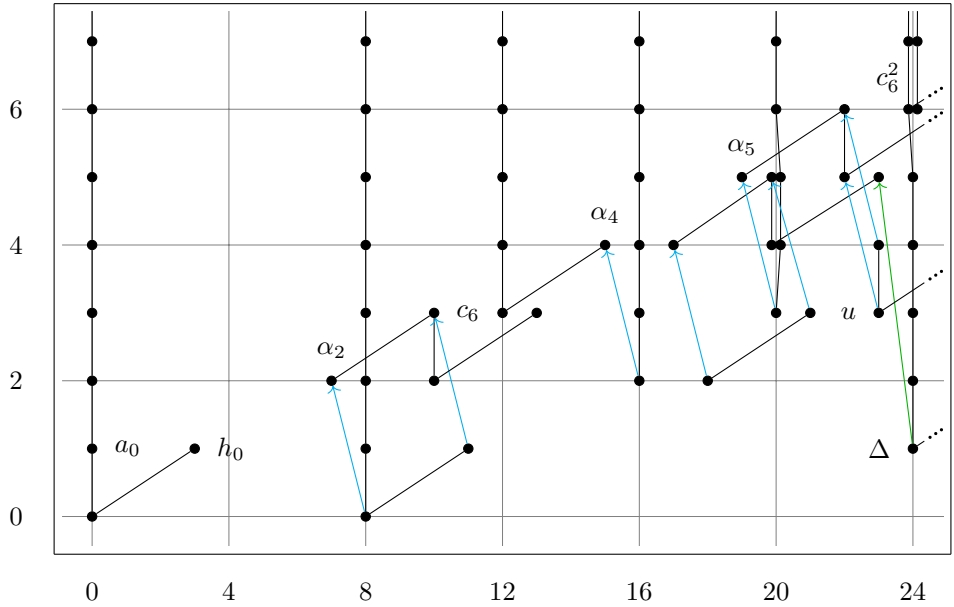

  \centering
  The $\F_3$-Adams spectral sequence for $\tmf$ \\\vspace{6pt}
  \printpage[ name = tmfASS, page = 0 ]
  \caption{The $\F_3$-Adams spectral sequence for $\tmf$ as it appears in \cite{Culvertmf}.}
  \label{fig:tmf}
\end{figure}

\begin{prop}
  The kernel of the unit map $\pi_{23}\Ss_{3} \to \pi_{23}\MOtw_{3}$ does not contain $\alpha_1 \beta_1^2$, hence is generated by the image of $J$. Equivalently, $d_5(x_{12}^2) = 0$.
\end{prop}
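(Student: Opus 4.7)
The plan is to show that $x_{12}^2$ is a permanent cycle by contrasting the divisibility by $3$ of $\tilde c_6^2 \in \pi_{24}\MOtw$ with that of its image $c_6^2 \in \pi_{24}\tmf$. Using the lift $\tilde c_6$ from the preceding lemma (detected by $a_0^2 x_{12}$), the square $\tilde c_6^2$ is nonzero because it maps under the string orientation to $c_6^2 \in \pi_{24}\tmf$, which is detected by the permanent cycle $c_6^2$ at bidegree $(24,6)$ in the $\tmf$ ASS.

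The key input from $\tmf$ is that $c_6^2$ is \emph{not} divisible by $3$ in $\pi_{24}\tmf_{(3)}$. By the Hopkins--Mahowald description, $3\Delta$ lifts to $\pi_{24}\tmf_{(3)}$ (since $24\Delta$ lifts integrally and $8$ is a $3$-adic unit), so the torsion-free part of $\pi_{24}\tmf_{(3)}$ is freely generated by $c_4^3$ and $3\Delta$. The identity $c_6^2 = c_4^3 - 1728\Delta = c_4^3 - 576\,(3\Delta)$ expresses $c_6^2$ with coefficient $1$ on the free generator $c_4^3$, so $c_6^2 \notin 3\,\pi_{24}\tmf_{(3)}$. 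Since ring maps preserve divisibility, $\tilde c_6^2$ is likewise not divisible by $3$ in $\pi_{24}\MOtw$.

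Now assume for contradiction that $d_5(x_{12}^2) \neq 0$. By \Cref{lem:easy-diff}, the only surviving class at $(23,5)$ on $\mathrm{E}_5$ is $h_0 b_0^2$ (since the $u$-tower has been killed by $d_2$), so $d_5(x_{12}^2) \dot{=} h_0 b_0^2$. Because $a_0 h_0 = 0$ in $\Ext_{\mathcal{A}}(\F_3, \F_3)$ at $p=3$, the Leibniz rule gives $d_5(a_0^k x_{12}^2) = 0$ for all $k \geq 1$, and the sparseness of the $(23,\ast)$-column for $\ast \geq 6$ rules out higher differentials on this $a_0$-tower. Let $y \in \pi_{24}\MOtw$ be detected by $a_0 x_{12}^2$; by multiplicativity, $\tilde c_6^2$ is detected by $(a_0^2 x_{12})^2 = a_0^4 x_{12}^2$, which is also the detecting class of $27y$ (since $a_0$ detects multiplication by $3$). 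Hence $\tilde c_6^2 - 27y$ lies in $F^5\pi_{24}\MOtw$, and since that filtration piece at $(24,\ast)$ is generated along the $a_0$-tower by $3^4 y$, we may write $\tilde c_6^2 - 27 y = 3^4 y \cdot c$ for some $c \in \Z_{(3)}$, whence
\[
\tilde c_6^2 \;=\; 27\,(1 + 3c)\, y.
\]
Since $1 + 3c$ is a $3$-adic unit, $\tilde c_6^2$ is divisible by $27$ in $\pi_{24}\MOtw$, contradicting the previous paragraph. The most delicate step is confirming that the higher differentials on the $a_0$-tower through $a_0^4 x_{12}^2$ all vanish, which is where a careful inspection of the sparse region of $(23,\ast)$ for $\ast \geq 6$ becomes necessary.
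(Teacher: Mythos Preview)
Your argument has a genuine gap. You claim that $F^5\pi_{24}\MOtw$ is generated along the $a_0$-tower on $x_{12}^2$, so that $\tilde c_6^2 - 27y$ is forced to be a multiple of $81y$. But this ignores the second $\Z_3$-summand of $\pi_{24}\MOtw$: the class $a_0^2 x_{24}$ is a permanent cycle. Indeed, $d_2(a_0^2 x_{24}) = a_0^4 u = 0$ because the $u$-tower stops at $a_0^3 u$, and there are no possible targets for higher differentials at $(23,s)$ with $s\geq 7$. So under your contradiction hypothesis, $\pi_{24}\MOtw$ has a $\Z_3$-basis $\{z, y_2\}$ with $z$ detected by $a_0 x_{12}^2$ (filtration $1$) and $y_2$ detected by $a_0^2 x_{24}$ (filtration $5$). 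Since $\tilde c_6^2$ is detected by $a_0^4 x_{12}^2$ in filtration $4$, one only obtains
\[
\tilde c_6^2 \;=\; 27\,u\, z \;+\; v\, y_2,\qquad u\in\Z_3^\times,\ v\in\Z_3,
\]
and the non-divisibility of $c_6^2$ by $3$ in $\tmf$ merely forces $v$ to be a unit; it does not produce a contradiction.

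The paper's proof confronts exactly this second summand. It uses two further ingredients you did not invoke: first, via Moss's theorem the class $z$ lies in the Toda bracket $\langle 3,\alpha_1\beta_1^2,\iota'\rangle$, and comparison with the $d_4$ on $\Delta$ in the $\tmf$ ASS shows that $z$ maps to $[3\Delta]$ (up to higher filtration); second, \Cref{lemm:mo12-homology}(3) says $x_{24}$ maps to zero on the $\mathrm{E}_2$-page for $\tmf$, so the image of $y_2$ in $\tmf$ has Adams filtration at least $6$. The contradiction is then a filtration argument in $\tmf$, not a divisibility argument: the image of $\tilde c_6^2 - 27u z$ equals $c_6^2 - 27u[3\Delta]$, which is detected by $a_0^4\Delta$ in filtration exactly $5$, while $v\,[y_2]$ has filtration at least $6$. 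Your divisibility approach cannot substitute for this, because the filtration-$\geq 6$ part of $\pi_{24}\tmf$ contains $c_6^2$ itself, which is not divisible by $3$; hence knowing only that $[y_2]$ has high filtration does not force $[y_2]\in 3\,\pi_{24}\tmf$.
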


\begin{proof}
  We will proceed by contradiction.
  Suppose that $d_5(x_{12}^2) \dot{=} h_0 b_0^2$.
  \begin{itemize}
  \item Let $y$ denote an element of $\pi_{24}\MOtw$ which is detected by $a_0^2x_{24}$.
  \item Let $z$ denote an element of $\pi_{24}\MOtw$ which is detected by $a_0x_{12}^2$.
  \end{itemize}
  Note that any choice of $y,z$ form a basis for $\pi_{24}\MOtw$ over $\Z_3$.
  By Moss's theorem \cite{Moss}, we can choose $z$ such that
  $z \in \langle 3, \alpha \beta^2, \iota' \rangle$, where again $\iota'$ is the unit of $\MOtw$.
  Similarly, from the $d_4$ off of $\Delta$ in the $\F_3$-Adams spectral sequence for $\tmf$ we learn that
  $[3\Delta] \in \langle 3, \alpha \beta^2, \iota \rangle$.
  Post-composing with the string orientation lets us conclude that $z$ maps to $[3\Delta]$ up to higher filtration elements. 
  Then, using the fact that $c_6^2, [3\Delta]$ are generators for $\pi_{24}\tmf$ we may conclude that the map $\pi_{24}\MOtw \to \pi_{24}\tmf$ is surjective.
  Now we may choose $z$ such that it maps to $[3\Delta]$ in $\tmf$.

  Using the $\F_3$-Adams filtration of $\MOtw$ and the fact that a lift of $c_6$ is detected by $a_0^2x_{12}$, we can conclude that $c_6^2 = u_1 27z + u_2y$ for some constants $u_1 \in \ZZ_{3}^\times$ and $u_2 \in \ZZ_{3}$. Rearranging, we can write
  $$ c_6^2 - 27 u_1 z = u_2 y. $$
  Now consider the Adams filtration of the image of each side of this equality in $\tmf$.
  The left-hand side maps to $ c_6^2 - 27u_1[3\Delta] $ which has Adams filtration 5.
  The element $y$ is detected by $a_0^2x_{24}$ in filtration 5.
  However by \Cref{lemm:mo12-homology} we know that $x_{24}$ maps to zero under the map of $\mathrm{E}_2$-pages induced by the string orientation.
  Thus, the right-hand side maps to an element of Adams filtration at least 6, a contradiction.
\end{proof}

\section{The case of dimension $127$ at the prime $2$} \label{sec:medium}

In this section, we will prove the following proposition, which implies the dimension $127$ case of \Cref{thm:unit-ker}.

\begin{prop} \label{lem:nofilt49}
  There is no element of $\pi_{127} \Ss_{(2)}$ which is detected in $\HFt$-Adams filtration $49$. In other words, if $x \in \pi_{127} \Ss_{(2)}$ has $\F_2$-Adams filtration at least $49$, then it also has $\F_2$-Adams filtration at least $50$.
\end{prop}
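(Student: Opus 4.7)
The plan is to establish that $E_\infty^{127,49} = 0$ in the classical $\F_2$-Adams spectral sequence for $\Ss$. Combined with \Cref{prop:w-filt}, which places any generator $w$ of the kernel of $u_{127}: \pi_{127}\Ss_{(2)} \to \pi_{127}\MO\langle 64\rangle_{(2)}$ (modulo image of $J$) in Adams filtration at least $2N_2 - 1 = 49$, this forces $w$ to have filtration at least 50. Then the bound $\Gamma_2(127) < 49.1$ from \Cref{tbl:remaining} implies $w$ lies in the image of $J$, completing the 127-dimensional case of \Cref{thm:unit-ker}.

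The first step is to observe that the bidegree $(t-s, s) = (127, 49)$ has slope $49/127 \approx 0.386$, well above Mahowald's vanishing line of slope $1/5$. In this region, every class on the $E_2$-page of the Adams spectral sequence for $\Ss$ is $v_1$-periodic, and the only permanent cycles are those detecting elements of the image of $J$ together with the Adams $\mu$-family. Since the $\mu$-family is concentrated in stems $\equiv 1, 2 \pmod 8$ while $127 \equiv 7 \pmod 8$, it contributes nothing to stem 127.

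The second step is to enumerate the $v_1$-periodic Ext contributions at stem 127. These are controlled by the image of $J$, which is cyclic of order $2^8 = 256$ in stem $127$; the corresponding $h_0$-tower starts at a low Adams filtration and terminates after 8 multiplications by $h_0$. Explicitly tracking this tower (and any additional periodicity translates) shows it occupies filtrations on the order of $1$ through $8$, leaving filtration 49 empty. Thus $\text{Ext}_\mathcal{A}^{49, 176}(\F_2, \F_2) = 0$, and a fortiori $E_\infty^{127, 49} = 0$.

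The main obstacle is making Mahowald's vanishing/periodicity statement sharp enough at our specific bidegree — the classical formulation carries a universal constant that needs to be controlled. The refined versions due to Davis--Mahowald \cite{DM3}, which are precisely the source of the $\Gamma_2(127) < 49.1$ bound cited in \Cref{tbl:remaining}, supply the required precision. An alternative route — perhaps what the authors have in mind for the ``short homological argument'' — is to extract the vanishing directly from the explicit minimal Adams resolution of $\F_2$ over the Steenrod algebra, verifying by hand that no generator contributes to bidegree $(49, 176)$ in the range of interest.
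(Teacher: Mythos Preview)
Your proposal contains a genuine error: the claim that $\Ext_{\mathcal{A}}^{49,176}(\F_2,\F_2)=0$ is false. In fact this group is one-dimensional, generated by $h_0^{48}h_7$. Recall that $h_7$ lies in bidegree $(t-s,s)=(127,1)$, and the $h_0$-tower $h_0^{i}h_7$ is nonzero all the way up to $i=63$ (this is \cite[Lemma~3.4.15]{greenbook}). So the entire column $s=1,\dots,64$ in stem~$127$ is populated at the $\mathrm{E}_2$-page, and filtration~$49$ is squarely in the middle of this tower. Your assertion that the image of~$J$ tower ``occupies filtrations on the order of $1$ through $8$'' has it backwards: in stem $2^n-1$ the image of~$J$ is detected at the \emph{top} of the $h_0$-tower on $h_n$, not the bottom. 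For $n=7$ these are filtrations roughly $57$ through~$64$.

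The appeal to Davis--Mahowald is also circular. The bound $\Gamma_2(127)<49.1$ recorded in \Cref{tbl:remaining} says precisely that every element of $\pi_{127}\Ss_{(2)}$ of $\F_2$-Adams filtration at least $50$ lies in the image of~$J$; it says nothing about filtration~$49$, which is exactly the case the proposition is designed to handle.

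The paper's actual argument runs as follows. First one shows (via Adams periodicity applied to Tangora's tables) that $\mathrm{E}_2^{49,176}=\F_2\{h_0^{48}h_7\}$ is one-dimensional. Then one argues by contradiction: if some $x\in\pi_{127}\Ss_{(2)}$ were detected by $h_0^{48}h_7$, then $2^{15}x$ would be detected by $h_0^{63}h_7$, which sits at the top of the tower (nothing above it by Adams vanishing) and hence detects a nonzero element of the image of~$J$. Since the image of~$J$ is a summand of $\pi_{127}\Ss_{(2)}$, this forces the image of~$J$ to contain an element of order at least $2^{16}$, contradicting the fact that it is cyclic of order~$2^8$. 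So the point is not that the $\mathrm{E}_2$-page vanishes, but that anything surviving there would make the image of~$J$ too large.
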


\begin{proof}[Proof of \Cref{thm:unit-ker} in dimension $127$, given \Cref{lem:nofilt49}]
  By \Cref{prop:w-filt} and \Cref{tbl:remaining}, the element $w \in \pi_{127} \Ss_{(2)}$ lies in $\F_2$-Adams filtration at least $49$. By \Cref{lem:nofilt49}, $w$ in fact lies in $\F_2$-Adams filtration at least $50$. Consulting \Cref{tbl:remaining} once more, we find that $w$ must lie in the image of $J$, as desired.
\end{proof}

The proof of \Cref{lem:nofilt49} will be based on two further lemmas.

\begin{lem} \label{lem:small-E2}
  Let $\mathrm{E}_2 ^{s,t}$ denote the $\mathrm{E}_2$-page of the $\HFt$-Adams spectral sequence. Then
  \[\mathrm{E}_2 ^{49, 176} = \F_2\{h_0 ^{48} h_7\}\]
\end{lem}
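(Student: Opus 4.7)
The plan is to compute $\Ext^{49, 176}_{\A}(\F_2, \F_2)$ explicitly and show it has dimension $1$, spanned by $h_0^{48} h_7$. This splits naturally into two independent tasks: (a) verifying that the class $h_0^{48}h_7$ is nonzero, and (b) ruling out any other classes in this bidegree.

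For (a), I would exploit $h_0$-localization: the ring $h_0^{-1}\Ext_{\A}(\F_2, \F_2)$ is the polynomial algebra $\F_2[h_0^{\pm 1}, h_1, h_2, h_3, \ldots]$ on the classical indecomposables, so $h_7$ survives as a polynomial generator after inverting $h_0$. Consequently, $h_0^{48} h_7$ is already nonzero before localization. In particular, this class is a legitimate element of $\Ext^{49, 176}_{\A}(\F_2, \F_2)$.

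For (b), the natural approach is via the May spectral sequence. Its $E_1$-page is the polynomial algebra on generators $h_{ij}$ ($i \geq 1$, $j \geq 0$) of tridegree $(1, 2^j(2^i - 1), i)$, and we must enumerate the monomials of total bidegree $(s, t) = (49, 176)$. Any such monomial consists of $49$ generators whose stems $2^j(2^i - 1) - 1$ sum to $127$; apart from $h_0^{48} h_7$ itself, the remaining candidates necessarily involve one or more non-$h_0$ generators, and (since most low-May-filtration generators contribute small stems) they are forced to use a high generator such as $h_{70}$, together with a correspondingly heavy tail of $h_0$'s. Each remaining candidate must then be shown to be a May boundary (or the target of a May differential), using the standard quadratic differential $d_2(h_{ij}) = \sum_{0 < k < i} h_{i-k,\,k+j}\, h_{k,\,j}$ and its iterates.

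The main obstacle is the bookkeeping in step (b): while the enumeration of candidate monomials is finite and highly constrained by the stem and filtration, explicitly checking each one against the known May differentials by hand is tedious. The cleanest route, and probably the one taken in the paper, is to appeal to the machine-generated minimal resolution calculations of $\Ext_{\A}(\F_2, \F_2)$ (such as Bruner's tables), which extend well beyond filtration $49$ in this stem range and confirm the one-dimensional conclusion immediately.
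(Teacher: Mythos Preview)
Your argument for (a) contains an error: the ring $h_0^{-1}\Ext_{\A}(\F_2,\F_2)$ is \emph{not} the polynomial algebra $\F_2[h_0^{\pm 1}, h_1, h_2, \ldots]$. Already the Adams relation $h_0 h_1 = 0$ forces $h_1$ to vanish after inverting $h_0$, so the structure you describe cannot hold. The conclusion that $h_0^{48} h_7 \neq 0$ is nonetheless correct, but it requires a different justification; the paper cites \cite[Lemma 3.4.15]{greenbook}, which gives $h_0^{63} h_7 \neq 0$, whence $h_0^{48} h_7 \neq 0$ a fortiori.

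Your plan for (b) via the May spectral sequence is valid in principle, but you do not carry it out, and your fallback to machine computation is essentially the content of the remark the paper places after the lemma. The paper's own argument avoids both the May enumeration and any appeal to machines: Adams periodicity \cite[Theorem 3.4.6]{greenbook} supplies an isomorphism $\mathrm{E}_2^{17,80} \cong \mathrm{E}_2^{49,176}$, and the former bidegree lies in the range already tabulated by Tangora, where one reads off directly that $\mathrm{E}_2^{17,80} = \F_2\{h_0^{16} h_6\}$. This reduces a filtration-$49$ computation to a filtration-$17$ one, and that periodicity step is the key idea missing from your approach.
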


\begin{proof}
  It may be read off from \cite[Fig. 5]{TangoraExt} that $\mathrm{E}_2^{17,8 0} = \F_2 \{h_0 ^{16} h_6\}$.
  It follows from \cite[Theorem 3.4.6]{greenbook} that Adams periodicity determines an isomorphism
  $\mathrm{E}_2^{17, 80} \cong \mathrm{E}_2^{49, 176}$,
  so that the latter is a one dimensional $\F_2$-vector space.
  By \cite[Lemma 3.4.15]{greenbook}, $h_0 ^{63} h_7$ is nonzero, so that
  $h_0 ^{48} h_7 \in \mathrm{E}_2^{49,176}$
  must be nonzero and so a basis for $\mathrm{E}_2^{49,176}.$
\end{proof}

\begin{rmk}
  The conclusion of this lemma can also be read off directly from Nassau's computer calculations of Ext over the Steenrod algebra \cite{Nassaucomp}.
\end{rmk}

%
\begin{lem} \label{lem:in-J}
  Suppose that $x \in \pi_{2^n-1} \Ss_{(2)}$ is detected on the $\mathrm{E}_2$-page of the $\HFt$-Adams spectral sequence by the class $h_0 ^{2^{n-1} - 1} h_n$. Then $x$ lies in the image of the $J$ homomorphism.
\end{lem}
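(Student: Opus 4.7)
The plan is to match $h_0^{2^{n-1}-1} h_n$ with a class detecting an image of $J$ element, using Adams's periodicity operator. Recall that $P = \langle -, h_0^4, h_3 \rangle$ acts on the $\F_2$-Adams $E_2$-page as a Massey product, shifting bidegree by $(s, t) \mapsto (s+4, t+12)$ in the appropriate range, and is realized in homotopy by the Toda bracket $\langle -, 16, \sigma \rangle$ via Moss's convergence theorem.

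First, I would identify $h_0^{2^{n-1}-1} h_n = u \cdot P^{2^{n-3}-1}(h_0^3 h_3)$ on the $E_2$-page for some $u \in \F_2^\times$. Both classes lie in bidegree $(s, t-s) = (2^{n-1}, 2^n-1)$, which sits on the Adams edge, where $\Ext$ is one-dimensional (by an argument analogous to that of Lemma \ref{lem:small-E2}, or as may be read off from Nassau's computations). Next, $h_0^3 h_3$ detects $8 \sigma \in \pi_7 \Ss_{(2)}$, which lies in the image of $J$. Iterating the Toda bracket $\langle -, 16, \sigma \rangle$ starting from $8\sigma$ then yields an element $y \in \pi_{2^n-1} \Ss_{(2)}$ detected by $h_0^{2^{n-1}-1} h_n$ on $E_\infty$. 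To see that $y$ lies in the image of $J$, one either unpacks these Toda brackets directly (using $\langle \sigma, 16, \sigma \rangle$-type relations and standard image-of-$J$ detection) or invokes Mahowald's $bo$-resolution, which recognizes $y$ as $v_1$-periodic, and hence in the image of $J$.

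With such a $y$ in hand, the difference $x - y$ has $\F_2$-Adams filtration strictly greater than $2^{n-1}$. To conclude, I would argue that every $E_\infty$ class in stem $2^n - 1$ of filtration exceeding $2^{n-1}$ is in the image of $J$: this follows from the Adams vanishing line, together with the observation that Adams's $\mu$-family contributes only in stems $\equiv 1, 2 \pmod 8$, whereas $2^n - 1 \equiv -1 \pmod 8$ for $n \geq 3$. Hence $x \in \mathrm{im}(J)$. The main obstacle is rigorously identifying Adams periodicity with image-of-$J$-preserving operations on homotopy; this requires either careful Toda bracket manipulation (controlling indeterminacies at each iteration) or, more slickly, invocation of Mahowald's $v_1$-periodic machinery via the $bo$- or $j$-based Adams spectral sequence.
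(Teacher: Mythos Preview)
Your proposal is correct and follows the same two-step skeleton as the paper: (i) exhibit an image-of-$J$ element $y$ detected by $h_0^{2^{n-1}-1} h_n$, then (ii) argue that $x - y$, which lies in strictly higher Adams filtration, must also be in the image of $J$.

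The paper's execution, however, is considerably more direct on both steps. For (i), rather than building $y$ by iterating Adams periodicity and Toda brackets from $8\sigma$ and then invoking Mahowald's $v_1$-periodic machinery to certify that the result lies in the image of $J$ (precisely the step you flag as ``the main obstacle''), the paper simply cites Ravenel's green book \cite[Lemma 3.4.15 and Theorem 3.4.16]{greenbook}, where the detection of image-of-$J$ elements by exactly these Adams-edge classes is already recorded. For (ii), the paper applies Adams's vanishing theorem \cite[Theorem 2.1]{AdamsPer} in its sharp form to conclude that the $E_2$-page is literally zero in stem $2^n-1$ above filtration $2^{n-1}$, so that $x - y = 0$ outright; there is no need to sort higher-filtration survivors into image-of-$J$ versus $\mu$-family. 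Your route would work, but it effectively reproves standard literature; the paper compresses the entire lemma into two citations.
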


\begin{proof}
  By Adams vanishing \cite[Theorem 2.1]{AdamsPer}, there can be no elements above $h_0 ^{2^{n-1}-1} h_n$ in the $\HFt$-Adams spectral sequence, i.e. $\mathrm{E}^{1+s,2^{n}+s} = 0 $ for all $s \geq 2^{n-1}$.
  It therefore suffices to establish the existence of an element in the image of $J$ detected by $h_0 ^{2^{n-1}-1}h_n$. This follows from \cite[Lemma 3.4.15 and Theorem 3.4.16]{greenbook}.
\end{proof}

\begin{proof}[Proof of \Cref{lem:nofilt49}]
  Suppose that $x \in \pi_{127} \Ss_{(2)}$ were of $\HFt$-Adams filtration $49$. Then it would have to be detected on the $\mathrm{E}_2$-page by $h_0 ^{48} h_7$ by \Cref{lem:small-E2}, so that $2^{15} x$ is detected by $h_0 ^{63} h_7$.

  By \Cref{lem:in-J}, this implies that $2^{15} x$ lies in the image of $J$. Since the image of $J$ is a summand of $\pi_{127} \Ss_{(2)}$ \cite{AdamsJIV,QuillenAdamsConj}, this implies that the image of $J$ must contain an element of order $2^{16}$, which contradicts the fact that it is a cyclic group of order $2^8$ \cite{AdamsJIV, QuillenAdamsConj}.
\end{proof}

\section{Further applications} \label{sec:app}
\subsection{Stein fillable homotopy spheres} \label{sec:stein}
In this section, we complete the enumeration of odd-dimensional homotopy sphere which admit a Stein fillable contact structure, answering a question of Eliashberg \cite[3.8]{ContactWorkshop}.
Bowden, Crowley and Stipsicz have constructed Stein fillable contact structures on homotopy spheres which bound parallelizable manifolds and conjectured that these are all of them \cite[Conjecture 5.9]{SteinFillable}.
We show that their conjecture is true in dimensions other than $23$. In dimension $23$, we provide a counterexample and analyze the extent to which it fails.
This result is new in dimensions $n = 23$ and $39 \leq n \leq 247$ congruent to $7$ modulo $8$, see \cite[Theorem 5.4]{SteinFillable}, \Cref{rmk:stein-known} and \cite[Theorem 3.1]{Boundaries} for the other cases.


\begin{thm} \label{thm:stein}
  Let
  $q \neq 11$ be a positive integer. A homotopy sphere $\Sigma \in \Theta_{2q+1}$ admits a Stein fillable contact structure if and only if $\Sigma \in bP_{2q+2}$, i.e. if and only if the class $[\Sigma]$ of $\Sigma$ in $\coker(J)_{2q+1}$ is zero.

  On the other hand, a homotopy sphere $\Sigma \in \Theta_{23}$ admits a Stein fillable contact structure if and only if 
  \[ [\Sigma] \in \{0, \eta^3 \kappabar\} \subset \coker(J)_{23}.\]
\end{thm}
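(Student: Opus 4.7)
The plan is to combine \Cref{thm:main} with the theory of Stein handlebodies. The easy implication, that every homotopy sphere in $bP_{2q+2}$ admits a Stein fillable contact structure, is \cite[Proposition 5.3]{SteinFillable}. For the forward direction---Stein fillable implies the stated restriction on $[\Sigma]$---I would argue as follows. By Eliashberg's structure theorem for Stein domains, any Stein filling $W^{2q+2}$ of $\Sigma$ admits a handle decomposition with handles of index at most $q+1$. Since $\partial W = \Sigma$ is a homotopy sphere, standard handle trading below the middle dimension lets us replace $W$ by a filling with only a single $0$-handle and handles in index $q+1$, i.e.\ by a $q$-connected $(2q+2)$-manifold bounding $\Sigma$.

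Having reduced to the existence of a $q$-connected filling, I would invoke \Cref{thm:main}. For $q \notin \{8,11\}$ that theorem forces $[\Sigma]=0$, which is exactly the claim. For $q=11$, \Cref{thm:main} directly gives $[\Sigma] \in \{0,\eta^3\kappabar\}$. The remaining case $q=8$ (where \Cref{thm:main} a priori allows $[\Sigma]\in\{0,\eta\eta_4\}$) is not new; it is already covered by prior work of Bowden--Crowley--Stipsicz recorded in \cite[Theorem 5.4]{SteinFillable}, which rules out the exceptional class in dimension $17$ using the stronger almost complex data provided by a Stein filling.

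The genuinely new content is the converse in dimension $23$: constructing a Stein fillable contact structure on an exotic $23$-sphere $\Sigma$ with $[\Sigma]=\eta^3\kappabar$. I would take the $11$-connected $24$-manifold $M_{24}$ produced in \Cref{sec:witten}, which has $[\partial M_{24}] = \eta^3\kappabar$ by the refinement discussed in \Cref{sec:inch} and \Cref{sec:3-3}. Because $M_{24}$ is $11$-connected, its natural handle decomposition has only a $0$-handle and $12$-handles, i.e.\ it is subcritical from the standpoint of Eliashberg's theorem: realizing it as a Stein domain amounts to arranging the almost complex structure so that the attaching spheres of the $12$-handles become Legendrian with the correct framings. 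Concretely, one presents $M_{24}$ as a plumbing of rank $12$ disk bundles over $S^{12}$ and chooses the bundles and their almost complex structures so that the plumbing is almost complex and the attaching data lie in the image of $\pi_{11}U(6) \to \pi_{11}O(12)$.

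The main obstacle is this last step. Unlike the situation of \cite[Proposition 5.3]{SteinFillable}, the manifold $M_{24}$ cannot be parallelizable (otherwise $[\partial M_{24}]$ would vanish in $\coker(J)_{23}$), so one cannot simply import the standard Stein structure on a plumbing of trivialized disk bundles. The work is in showing that an almost complex structure on some such plumbing does exist and realizes the homotopy class $\eta^3\kappabar$ on the boundary; the $n$-space classification of \Cref{sec:class}, together with the computation of $\partial Q$ in \Cref{lem:partial}, reduces this to an obstruction-theoretic calculation in $\pi_{11}BU(6)$ vs.\ $\pi_{11}BO(12)$ that I expect to be manageable but delicate.
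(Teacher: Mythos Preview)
Your forward direction is essentially the paper's argument, phrased slightly differently: the paper quotes directly the bordism-theoretic criterion from \cite[Proof of Theorem~5.4]{SteinFillable}, namely that $\Sigma$ is Stein fillable if and only if $[\Sigma]$ lies in the image of the composite
\[A^{U\langle q+1 \rangle}_{2q+2} \to A^{\langle q+1 \rangle}_{2q+2} \xrightarrow{\partial} \Theta_{2q+1} \to \coker(J)_{2q+1},\]
rather than redoing the handle-theoretic reduction. Your ``handle trading'' paragraph is a sketch of how that criterion is established in \cite{SteinFillable}; note though that what one actually obtains is a $q$-connected representative of the same \emph{bordism} class (via surgery below the middle dimension preserving the almost complex structure), not the original $W$ with low handles literally cancelled. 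Your treatment of $q=8$ (deferring to \cite[Theorem~5.4]{SteinFillable}) is correct and matches the paper.

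The genuine gap is in the $q=11$ existence direction. Your proposed direct construction of a Stein structure on $M_{24}$ is both harder than necessary and not completed: you flag the comparison of $\pi_{11}BU(6)$ with $\pi_{11}BO(12)$ as ``delicate'' and leave it open, and note also that the $12$-handles sit at the \emph{critical} index $q+1$, not the subcritical range, so the Legendrian framing condition is genuinely nontrivial. The paper bypasses all of this with a one-line observation: since $\pi_{11}(U) \to \pi_{11}(SO)$ is an isomorphism, the forgetful map $A^{U\langle 12 \rangle}_{24} \to A^{\langle 12 \rangle}_{24}$ is itself an isomorphism (this is exactly the argument on \cite[pg.~28]{SteinFillable}). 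Hence the image of $A^{U\langle 12\rangle}_{24}$ in $\coker(J)_{23}$ coincides with the image of $A^{\langle 12\rangle}_{24}$, which by \Cref{thm:main} is $\{0,\eta^3\kappabar\}$. This single stable computation gives both directions for $q=11$ at once, without ever producing an explicit Stein handlebody. You should replace your direct construction with this bordism-level argument; it is the missing idea, and it closes the proof immediately.
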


\begin{proof}
%
  As in \cite{SteinFillable}, let $A^{U \langle q+1 \rangle}_{2q+2}$ denote the group of almost closed $(q+1)$-connected almost complex $(2q+2)$-manifolds, modulo $(q+1)$-connected almost complex cobordisms restricting to $h$-cobordisms on the boundary.
  We then have the sequence of maps
  \[A^{U\langle q+1 \rangle} _{2q+2} \to A^{\langle q+1\rangle} _{2q+2} \xrightarrow{\partial} \Theta_{2q+1} \to \coker(J)_{2q+1},\]
  and as in \cite[Proof of Theorem 5.4]{SteinFillable}, we see that an exotic sphere $\Sigma  \in \Theta_{2q+1}$ admits a Stein fillable contact structure if and only if the class $[\Sigma] \in \coker(J)_{2q+1}$ is in the image of the composite map from $A^{U\langle q+1 \rangle} _{2q+2}$.

  By \cite[Theorem 5.4]{SteinFillable}, it suffices to deal with the case when $q \equiv 3 \mod 4$ or $q = 9$.
  In the case $q=9$, \cite[Theorem 3.4(iii)]{Schultz} states that the map $A^{\langle q+1 \rangle} _{2q+2} \to \coker(J)_{2q+1}$ vanishes.
  In the case when $q \equiv 3 \mod 4$ and $q \neq 11$, this map is zero by \Cref{thm:main}. 

  On the other hand, when $q = 11$, we note that by the argument on \cite[pg. 28]{SteinFillable}, the map $A^{U\langle q+1 \rangle} _{2q+2} \to A^{\langle q+1\rangle} _{2q+2}$ is an isomorphism, as $\pi_{11} (U) \to \pi_{11} (SO)$ is an isomorphism.
  It follows that the image of the composite $A^{U\langle q+1 \rangle} _{2q+2} \to \coker(J)_{2q+1}$ is equal to $\{0, \eta^3 \kappabar\}$ by \Cref{thm:main}, as desired.
\end{proof}

\begin{rmk} \label{rmk:stein-known}
  Case (1) of \cite[Theorem 5.4]{SteinFillable} assumes that $q \neq 9$ in the $q \equiv 1 \mod 8$ case. This is because \cite[Corollary 3.2]{Schultz} does not cover this case. However, this case is in fact covered in \cite[Theorem 3.4(iii)]{Schultz}, so this hypothesis may be removed.
\end{rmk}

In particular, we obtain a counterexample to Conjecture 5.9 from \cite{SteinFillable}:

\begin{cor}
  There exists a $23$-dimensional homotopy sphere which admits a Stein fillable contact structure but does not bound a parallelizable manifold.
\end{cor}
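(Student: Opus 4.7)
The corollary is essentially immediate from \Cref{thm:stein} combined with \Cref{thm:counterex}, so the proof proposal is short. The plan is to exhibit a specific $23$-dimensional homotopy sphere $\Sigma$ whose class $[\Sigma] \in \coker(J)_{23}$ is equal to $\eta^3\kappabar$, and then invoke the two theorems above to conclude.

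First, I would produce the homotopy sphere. By \Cref{thm:counterex} (refined as indicated in the remark following it), the composition
\[ A^{\langle 11 \rangle}_{24} \xrightarrow{\partial} \Theta_{23} \to \coker(J)_{23} \]
has image exactly $\{0, \eta^3 \kappabar\}$. In particular there is an $11$-connected $24$-manifold $M$ whose boundary $\Sigma = \partial M \in \Theta_{23}$ satisfies $[\Sigma] = \eta^3\kappabar \in \coker(J)_{23}$. The class $\eta^3 \kappabar \in \pi_{23}\Ss$ is nonzero in $\coker(J)_{23}$ (this is standard; it is a permanent cycle detected in filtration $\geq 5$ at the prime $2$ and in particular does not lie in the image of the $J$-homomorphism).

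Second, by \Cref{thm:stein}, since $[\Sigma] = \eta^3\kappabar \in \{0, \eta^3\kappabar\}$, the homotopy sphere $\Sigma$ admits a Stein fillable contact structure. On the other hand, by the Kervaire--Milnor exact sequence
\[ 0 \to bP_{24} \to \Theta_{23} \to \coker(J)_{23}, \]
the subgroup $bP_{24}$ of homotopy spheres bounding parallelizable manifolds is precisely the kernel of the map to $\coker(J)_{23}$. Since $[\Sigma] = \eta^3 \kappabar \neq 0$, we conclude $\Sigma \notin bP_{24}$, so $\Sigma$ does not bound a parallelizable manifold.

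There is no real obstacle here; the only substantive ingredient is the existence of an $11$-connected $24$-manifold realizing the class $\eta^3\kappabar$ on the boundary, which is the content of the exceptional dimension $23$ case of \Cref{thm:main} (established via the Witten genus argument of \Cref{sec:witten} together with the homotopical refinement in \Cref{sec:inch}). Once that input is granted, the corollary reduces to combining \Cref{thm:stein} with the Kervaire--Milnor exact sequence.
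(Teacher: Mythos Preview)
Your proof is correct and follows the same approach as the paper, which treats the corollary as immediate from \Cref{thm:stein}: any $\Sigma \in \Theta_{23}$ with $[\Sigma] = \eta^3\kappabar$ is Stein fillable by that theorem and lies outside $\bp_{24}$ by the Kervaire--Milnor sequence. Your invocation of \Cref{thm:counterex} to produce such a $\Sigma$ is slightly more than needed, since for $n=12$ even the Kervaire--Milnor sequence is short exact and hence $\Theta_{23} \to \coker(J)_{23}$ is already surjective.
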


\subsection{Mapping class groups} \label{sec:mapping}

Our results also have application to the computation of mapping class groups of highly connected manifolds. Indeed, this was the original motivation of Galatius--Randal-Williams in making their conjecture \cite{GRAbelianQuotients}.

\begin{dfn}
  Let $W^{2n} _g = \#^{g} (S^n \times S^n)$ denote the connected sum of $g$ copies of $S^n \times S^n$.
  We further let
  \[\Gamma^n _g = \pi_{0} \mathrm{Diff}^+ (W_g ^{2n} )\]
  denote the group of isotopy classes of orientation-preserving diffeomorphisms of $W^{2n}_g$.
\end{dfn}

Building on work on Kreck \cite{Kreck}, Krannich determined the group $\Gamma^n _g$ for $n \geq 3$ odd and $g \geq 1$ in terms of two extensions \cite{KrannichMappingClass}.
In the case $n \equiv 3 \mod 4$, his answer is phrased in terms of a certain exotic $(2n+1)$-sphere $\Sigma_Q$, which is the boundary of the manifold $Q$ considered in \Cref{sec:class}.
In \Cref{sec:class}, we computed $\Sigma_Q$ for all $n$.
Therefore, our results completely resolve the identity of the mysterious $\Sigma_Q$ which appeared in Krannich's work. We refer the interested reader to Krannich's paper \cite{KrannichMappingClass} for more details.

One consequence of Krannich's results is a computation of the abelianization of $\Gamma_g ^n$, extending and reproving an earlier result of Galatius and Randal-Williams \cite{GRAbelianQuotients}.\footnote{In a future version of \cite{KrannichMappingClass} that Krannich has shared with the authors, he also determines the abelianization of $\Gamma^n _g$ for even $n \geq 4$ and $g \geq 1$.} When combined with \Cref{thm:main}, \cite[Corollary E(i)]{KrannichMappingClass} implies the following result, which demonstrates the effect that our $23$-dimensional counterexample to \cite[Conjectures A \& B]{GRAbelianQuotients} can have on the abelianization of the mapping class groups of highly connected manifolds.

\begin{thm}
  Suppose that $n \geq 9$ is odd.
  Then, if $n \neq 11$ and $g \geq 3$, there is an isomorphism
  \[\mathrm{H}_1 (\Gamma^n _g) \cong \coker(J)_{2n+1} \oplus \Z/4\Z, \]
  and if $n \neq 11$ and $g = 2$, we have
  \[\mathrm{H}_1 (\Gamma^n _g) \cong \coker(J)_{2n+1} \oplus (\Z/4\Z \oplus \Z/2\Z). \]
  On the other hand, if $n = 11$ and $g \geq 3$, there is an isomorphism
  \[\mathrm{H}_1 (\Gamma^n _g) \cong \coker(J)_{23}/\eta^3 \kappabar \oplus \Z/4\Z \cong (\Z/4\Z \oplus \Z/2\Z) \oplus \Z/4\Z, \]
  and if $n = 11$ and $g = 2$, we have
  \[\mathrm{H}_1 (\Gamma^n _g) \cong \coker(J)_{2n+1}/\eta^3 \kappabar \oplus (\Z/4\Z \oplus \Z/2\Z) \cong (\Z/4\Z \oplus \Z/2\Z) \oplus (\Z/4\Z \oplus \Z/2\Z). \]
\end{thm}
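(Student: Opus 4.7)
The plan is to combine our \Cref{thm:main} (via \Cref{lem:partial}) with Krannich's computation of $H_1(\Gamma^n_g)$, \cite[Corollary E(i)]{KrannichMappingClass}. Krannich's formula expresses this abelianization as a direct sum $\coker(J)_{2n+1}/\langle [\Sigma_Q]\rangle \oplus T_g$, where $T_g$ is a ``topological'' cyclic factor (equal to $\Z/4$ for $g \geq 3$ and $\Z/4 \oplus \Z/2$ for $g = 2$, uniformly in $n$), and $[\Sigma_Q] \in \coker(J)_{2n+1}$ is the class of the boundary of the almost-closed $(2n+2)$-manifold $Q$ of \Cref{lem:partial}(2) when $n \equiv 3 \bmod 4$; for $n \equiv 1 \bmod 4$ no such quotient is taken (the analogue of $Q$ is either absent from $A^{\langle n\rangle}_{2n+2}$, as when $n \equiv 5 \bmod 8$, or its boundary vanishes in $\coker(J)_{2n+1}$ automatically by \Cref{lem:partial}(3)). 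The main work is therefore the identification of $[\Sigma_Q]$, which we have already carried out.

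First I would explicitly record Krannich's formula in the notation of this paper, matching his exotic sphere to our $\partial Q$. Next I would plug in the value of $[\partial Q]$. By \Cref{lem:partial}(2), for $n \equiv 3 \bmod 4$ with $n \neq 11$, the homotopy sphere $\partial Q$ lies entirely in $bP_{2n+2}$, so $[\Sigma_Q] = 0 \in \coker(J)_{2n+1}$. For the exceptional case $n = 11$, the same lemma, combined with our \Cref{thm:counterex} and the Adams-filtration argument of \Cref{sec:inch}, yields $[\Sigma_Q] = \eta^3 \kappabar \in \coker(J)_{23}$. Substituting these values into Krannich's formula produces the first summand of the claimed decomposition as $\coker(J)_{2n+1}$ (respectively $\coker(J)_{23}/\langle \eta^3 \kappabar\rangle$ when $n = 11$), while the $T_g$ summand carries through verbatim.

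It remains to verify the concrete identification $\coker(J)_{23}/\langle \eta^3 \kappabar\rangle \cong \Z/4 \oplus \Z/2$. This is a direct consultation of the $23$-stem of the stable homotopy groups of spheres: the cokernel of $J$ in degree $23$ is concentrated at the prime $2$ (the odd-primary part of $\pi_{23}^s$ is entirely image of $J$), and its $2$-primary structure, together with the identification of $\eta^3 \kappabar$ as a specific $2$-torsion generator, can be read off from the Adams spectral sequence computations surveyed in \Cref{sec:homotopy} (e.g.\ \cite{IWX}). Killing the single element $\eta^3 \kappabar$ then leaves exactly $\Z/4 \oplus \Z/2$.

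The main obstacle is essentially bookkeeping: one must faithfully translate Krannich's Corollary E(i) into the notation of this paper (in particular identifying his distinguished exotic sphere with our $\partial Q$) and verify that the subgroup of $\coker(J)_{23}$ generated by $\eta^3 \kappabar$ is a direct summand with the claimed quotient. All of the genuinely geometric and homotopical content is already in hand: the identification of $[\partial Q]$ is precisely the content of \Cref{thm:main}, \Cref{lem:partial}, and \Cref{sec:inch}, so once Krannich's formula is in our notation the substitution is mechanical.
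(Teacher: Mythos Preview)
Your proposal is correct and follows essentially the same approach as the paper: the paper simply states that the theorem follows by combining \Cref{thm:main} with \cite[Corollary E(i)]{KrannichMappingClass}, and your write-up spells out exactly this substitution, identifying Krannich's $\Sigma_Q$ with the boundary $\partial Q$ computed in \Cref{lem:partial}(2) and reading off the quotient in the exceptional case $n=11$.
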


In particular, one consequence of the existence of the exceptional $23$-dimensional counterexample to \cite[Conjectures A \& B]{GRAbelianQuotients} is to make the abelianization of $\Gamma^{11} _g$ smaller than would otherwise be expected.

\bibliographystyle{alpha}
\bibliography{bibliography}

\end{document}